\begin{document}

\hyphenation{hypo-thesis}

\newtheorem{theorem}{Theorem}[section]
\newtheorem{thm}{Theorem}[section]
\newtheorem{prop}[theorem]{Proposition}
\newtheorem{proposition}[theorem]{Proposition}
\newtheorem{lemma}[theorem]{Lemma}
\newtheorem{cor}[theorem]{Corollary}
\newtheorem{definition}[theorem]{Definition}
\newtheorem{conj}[theorem]{Conjecture}
\newtheorem{claim}[theorem]{Claim}
\newtheorem{defth}[theorem]{Definition-Theorem}
\newtheorem{example}[theorem]{Example}
\newtheorem{obs}[theorem]{Observation}
\newtheorem{rmk}[theorem]{Remark}

\newtheorem{introthm}{Theorem}
\newtheorem{introthma}{Theorem}
\newtheorem{introcor}[introthm]{Corollary}
\renewcommand{\theintrothm}{\Alph{introthm}}
\renewcommand{\theintrocor}{\Alph{introcor}}
\renewcommand{\theintrothma}{\Alph{introthma}$'$}
 
\newcommand{\dd}{{\partial}}
\newcommand{\s}{{\sigma}}

\newcommand\AAA{{\mathcal A}}
\newcommand\BB{{\mathcal B}}
\newcommand\CC{{\mathcal C}}
\newcommand\C{{\mathbb C}}
\newcommand{\Chat}{{\hat {\mathbb C}}}
\newcommand\DD{{\mathcal D}}
\newcommand\EE{{\mathcal E}}
\newcommand\G{{\Gamma}}
\newcommand\Gr{{\mathcal G}}
\newcommand\HH{{\mathcal H}}
\newcommand{\HHH}{{\mathbb H}}

\newcommand\Hyp{{\mathbb H}}
\newcommand\FF{{\mathcal F}}
\newcommand\GG{{\mathcal G}} 
\newcommand\MM{{\mathcal M}}

\newcommand\K {{\mathcal K}}
\newcommand\LL{{\mathcal L}}
\newcommand\XX{{\mathcal X}}
\newcommand\YY{{\mathcal Y}}

\newcommand{\N}{{\mathcal N}}
\newcommand\PP{{\mathcal P}}

\newcommand\TT{{\mathcal T}}

\newcommand{\PSL}{{PSL_2 (\mathbb{C})}}
 \newcommand\til{\widetilde}
\newcommand\CH{{\CC\HH}}

\newcommand\EXH{{ \EE (X, \HH )}}
\newcommand\GXH{{ \GG (X, \HH )}}
\newcommand\GYH{{ \GG (Y, \HH )}}
\newcommand\PEX{{\PP\EE  (X, \HH , \GG , \LL )}}
\newcommand\MF{{\MM\FF}}

\def\mul{\stackrel{{}_\ast}{\asymp}}
\def\add{\stackrel{{}_+}{\asymp}}
\def\ladd{\stackrel{{}_+}{\prec}}
\def\gadd{\stackrel{{}_+}{\succ}}
\def\lmul{\stackrel{{}_\ast}{\prec}}
\def\gmul{\stackrel{{}_\ast}{\succ}}

\def\co{{\colon \thinspace}}

 \newcommand{\Isom} {\operatorname {Isom}} 
\newcommand{\Stab} {\operatorname {Stab}} 
\newcommand{\Int} {\operatorname {Int}} 
\newcommand{\Ax} {\operatorname {Ax}}
\newcommand{\Teich} {\operatorname {Teich}} 
 \newcommand{\Mod} {\operatorname {Mod}} 
  \newcommand{\ML} {\operatorname {ML}} 
  \newcommand{\PML} {\operatorname {PML}}

\title[Limits of  limit sets II]{Limits of   limit sets II: \\ Geometrically Infinite Groups}

\author{Mahan Mj}
\address{\begin{flushleft} \rm {\texttt{mahan@rkmvu.ac.in; mahan.mj@gmail.com \\http://people.rkmvu.ac.in/$\sim$mahan/} }\\ School
of Mathematical Sciences, RKM Vivekananda University\\
P.O. Belur Math, Dt. Howrah, WB 711202, India \end{flushleft}}

\author{Caroline Series}
\address{\begin{flushleft} \rm {\texttt{C.M.Series@warwick.ac.uk \\http://www.maths.warwick.ac.uk/$\sim$masbb/} }\\ Mathematics Institute, 
 University of Warwick \\
Coventry CV4 7AL, UK \end{flushleft}}


\thanks{Research of first author partially supported by  CEFIPRA project 4301-1}   

\date{\today}

 \begin{abstract}
We show that for a strongly convergent sequence of purely loxodromic finitely generated Kleinian groups with incompressible ends,  Cannon-Thurston maps, viewed as maps from a fixed base limit set to the Riemann sphere, converge uniformly.
For algebraically convergent sequences we show that there exist examples where even pointwise 
convergence of
Cannon-Thurston maps fails.

\medskip

\noindent {\bf MSC classification: 30F40; 57M50 \\ Keywords: Kleinian group, limit set, Cannon-Thurston map, geometrically infinite group}    
 
\end{abstract}

 \maketitle
 

 \tableofcontents

 \section{Introduction}
Given an isomorphism between Kleinian groups,  a Cannon-Thurston map is a  continuous equivariant  map between their limit sets. It is by no means obvious that such a map always exists, however as the culmination of a long series of developments, it was shown in  \cite{mahan-kl}
that given a weakly type preserving (see below) isomorphism between any geometrically finite group $\Gamma$ and any Kleinian group $G$,   a Cannon-Thurston map always exists. 

 This paper is the second of two dealing with convergence of Cannon-Thurston or $CT$-maps, considered as
a sequence of continuous maps  from   the limit set of a 
 fixed geometrically finite group  to the sphere.   The  main questions addressed in both papers are:
\begin{enumerate}
\item Does strong convergence of finitely generated Kleinian groups imply uniform convergence of $CT$-maps?
\item Does algebraic convergence of finitely generated Kleinian groups imply pointwise convergence of $CT$-maps?
\end{enumerate}
In the first paper \cite{mahan-series1}
we dealt with the geometrically finite case  by showing that  both questions have a positive answer for a sequence of  geometrically finite groups converging to a geometrically finite limit, provided that, in case (2), the geometric limit is also geometrically finite.  As observed in  \cite{mahan-series1}, it is easy to see that if the groups converge algebraically but not strongly, then uniform convergence necessarily fails.

In the present paper we study the situation in which the limit group is geometrically infinite.
We show that, in the absence of parabolics and with incompressible ends, the answer to (1) is always positive, but, 
in what is   the most unexpected outcome of  our investigations, we provide a counter example to (2) by exhibiting a sequence of geometrically finite groups  converging algebraically but not strongly, for which the corresponding $CT$-maps fail to converge pointwise at a countable set of  points. The class of limit groups in question are Brock's partially degenerate examples~\cite{brock-itn}, described in more detail below. Thus our second main result  answers in the negative 
the second part of Thurston's  Problem 14 in his seminal paper~\cite{thurston}.     In these examples, both the algebraic and geometric limits of the $G_n$ are geometrically infinite. 
We do not know whether there exist examples of non-convergence in which the algebraic limit is geometrically finite but the geometric limit is not.

 Recall that an isomorphism $\rho: \G \to G$ between Kleinian groups  is  \emph{strictly type preserving} if $\rho(\gamma) \in G$  is parabolic if and only if $\gamma \in \G$ is also  parabolic; it is \emph{weakly type preserving} if the image of any parabolic element is parabolic. Since $CT$-maps preserve fixed points,  is easy to see  that a necessary criterion for the existence of a $CT$-map $\hat i \co  \Lambda_{\G} \to \Lambda_{G}$   between limit sets is that $\rho$ be weakly type preserving.

Our first main result, largely answering question (1), is:
\begin{introthm}\label{thm:strong=unif}  
Let $\Gamma$ be a  geometrically finite   Kleinian group  without parabolics,   which does not split as a free product. Let $\rho_n \co \Gamma  \to G_n$ be a sequence of strictly type preserving
isomorphisms to geometrically finite Kleinian groups $G_n$, which converge strongly to a  totally degenerate purely loxodromic Kleinian group $G_{\infty} = \rho_{\infty} (\Gamma )$. Then the sequence of $CT$-maps $\hat i_n: \Lambda_{\Gamma} \to  \Lambda_{G_n}$ converges uniformly to $\hat i_{\infty}: \Lambda_{\Gamma} \to  \Lambda_{G_{\infty}}$.
\end{introthm}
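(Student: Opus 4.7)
The plan is to reduce uniform convergence to two separate ingredients: pointwise convergence of $\hat i_n \to \hat i_\infty$ on $\Lambda_\Gamma$, and equicontinuity of the family $\{\hat i_n\}$ with a modulus independent of $n$. Since $\Gamma$ is geometrically finite without parabolics, it is convex cocompact, so $\Lambda_\Gamma$ is compact and $\Gamma$ acts cocompactly on its convex hull; fix a basepoint $o \in \HHH^3$ inside this convex hull. On a compact metric space, an equicontinuous sequence that converges pointwise converges uniformly, so these two ingredients will suffice. The non-splitting hypothesis is crucial because it places us in the incompressible-ends regime, where Mj's construction of the Cannon--Thurston map produces a quantitative distortion function.

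Pointwise convergence will follow once one can justify interchanging two limits. Writing $\xi = \lim_k \gamma_k o$ for some sequence $\gamma_k \in \Gamma$, we have $\hat i_n(\xi) = \lim_k \rho_n(\gamma_k) o$ and $\hat i_\infty(\xi) = \lim_k \rho_\infty(\gamma_k) o$. Algebraic convergence gives $\rho_n(\gamma_k) o \to \rho_\infty(\gamma_k) o$ for each fixed $k$, so the issue is to exchange the limits in $n$ and $k$; this requires a uniform-in-$n$ estimate on the inner limit, which is precisely what the equicontinuity step provides. For equicontinuity, I would establish a uniform Cannon--Thurston lemma asserting that for every $R > 0$ there exist $D > 0$ and $n_0$ such that for all $n \geq n_0$, any geodesic segment $\beta$ in $\HHH^3$ with $d(o, \beta) \geq D$ satisfies $d(o, \rho_n(\beta)) \geq R$, and similarly for $\rho_\infty$. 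Two points in $\Lambda_\Gamma$ close in the visual metric from $o$ are represented by geodesic rays that fellow-travel in a large ball before diverging, and the lemma then forces their $\rho_n$-images to fellow-travel deep in $\HHH^3$, placing their CT-images close on the Riemann sphere, uniformly in $n$.

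The main obstacle is extracting such a uniform CT-lemma from strong convergence. Mj's construction of the CT-map in the totally degenerate, incompressible-ends, purely loxodromic setting produces a distortion function from combinatorial data (a hierarchy or curve-complex model) attached to a sequence of compact subcores exhausting $\HHH^3/G_\infty$. Strong convergence yields, for each compact $K \subset \HHH^3/G_\infty$, bilipschitz embeddings $K \hookrightarrow \HHH^3/G_n$ for large $n$ with constants tending to $1$; the plan is to transport Mj's construction and its associated distortion estimates from $G_\infty$ back to $G_n$ via these embeddings, obtaining a common distortion function valid for all large $n$. The subtlety is that the ends of the $G_n$ are geometrically finite while those of $G_\infty$ are degenerate, so one must verify that the relevant subcores lie entirely within the region where strong convergence supplies good geometric control, and that the model-geometry input used by Mj is stable under the strong limit. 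Once the uniform CT-lemma is secured, equicontinuity assembles with the pointwise convergence above to yield uniform convergence by the standard Arzel\`a--Ascoli-type argument on the compact set $\Lambda_\Gamma$.
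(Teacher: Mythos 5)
Your overall architecture coincides with the paper's: both proofs come down to establishing a distortion function for the geodesic images $[j_n(\lambda)]$ that is uniform in $n$ (the condition UEPP of \cite{mahan-series1}), obtained for all large $n$ by transporting the limit group's Cannon--Thurston estimates through the bilipschitz embeddings supplied by strong convergence, and for the finitely many remaining $n$ by invoking the single-group criterion. Your repackaging of ``UEPP $\Rightarrow$ uniform convergence'' as pointwise convergence plus equicontinuity plus Arzel\`a--Ascoli is only a cosmetic variant of Theorem~\ref{unifcrit1}, which is already available from the first paper; and note that as stated your pointwise step is circular (you derive pointwise convergence from equicontinuity and equicontinuity is then combined with pointwise convergence), though this is repairable by using density of loxodromic fixed points, where algebraic convergence gives convergence directly.

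The genuine gap is in what you call the ``main obstacle,'' which you describe as a plan rather than prove. Transporting Mj's construction through the strong-convergence embeddings cannot work as literally described, because those embeddings only control a compact piece of $M_\infty$, whereas the manifolds $M_n$ and $M_\infty$ differ drastically outside every compact set (the ends of $M_n$ are geometrically finite, those of $M_\infty$ degenerate), and Mj's construction of the distortion function a priori uses the entire infinite end (the ladder $\mathcal L_\lambda$ and the hierarchy data are built from the whole end). The missing ingredient is a \emph{locality} statement: for the split-geometry model of $M_\infty$, the behaviour of $[j(\lambda)]$ inside the lift of the first $q$ blocks $\BB(q)$ of each end is controlled by a function depending only on the geometry of $\BB(q+D)$, where $D$ is a constant depending only on the topological type of the end surfaces --- this is Proposition~\ref{blocks} and Corollary~\ref{blocks1} of the paper. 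Its proof requires going back into \cite{mahan-split} and checking (i) that the graph quasiconvexity constant and the Lipschitz constant of the retraction onto the ladder are combinatorial quantities independent of the particular hyperbolic structure, and (ii) that the passage from the electro-ambient quasigeodesic to the genuine hyperbolic geodesic near the basepoint only involves the convex hulls of a bounded number $N(\gamma)$ of split components, bounded in terms of that combinatorial constant. Without identifying and justifying this finite-dependence property, the transport step --- and hence the uniform CT-lemma on which your whole argument rests --- is not established.
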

This result was proved by Miyachi~\cite{miyachi} in the case in which $\G$ is a surface group without parabolics and the injectivity radius is uniformly bounded below along the whole sequence. The condition that   $\G$ does not split as a free product is  equivalent to requiring that all ends of the  manifold $\HHH^3/\G$ are incompressible, see~\cite{bon}.

Theorem A  of \cite{mahan-series1}, which is essentially the above result in the geometrically finite case,  does not have  any of the restrictions  (absence of parabolics, not splitting as a free product,  strictly type preserving, totally degenerate) imposed above.    We  introduce these restrictions largely because of  technical  issues concerning  the model for the ends of the limit manifold $\HHH^3/G_{\infty}$.   With a bit more work, similar techniques to those used here can be used to prove the theorem in the general case, see~\cite{mahan-series3}.

If the convergence is algebraic but not strong, then uniform convergence necessarily fails.
This is an immediate consequence of Evans' theorem~\cite{evans1, evans2} that the limit sets $\Lambda_{G_n}$ converge  in the Hausdorff metric to the limit set of the \emph{geometric} limit   in the Hausdorff metric, see~\cite{mahan-series1} for further discussion.
 As far as we know, the question of pointwise convergence in this situation has not  previously been addressed. In answer to question (2) we have:
 \begin{introthm}\label{thm:alg=ptwise}  
 Let $\Gamma  $ be a  Fuchsian group for which  $\HHH^2/\G$ is a closed surface of genus at least $2$. Then there exists a Kleinian group $G_{\infty} $, together with an  isomorphism  $\rho_{\infty} : \Gamma \to G_{\infty} = \rho_{\infty} (\Gamma )$,   and a sequence of representations  $\rho_n\co \Gamma \to G_n$  to geometrically finite groups converging algebraically  to $G_{\infty} $, such that  the sequence of  {CT}-maps $\hat i_n: \Lambda_{\Gamma} \to  \Lambda_{G_n}$ fails to converge  pointwise to  $\hat i_\infty$ at a countable set of points in $\Lambda_{\G}$.
\end{introthm}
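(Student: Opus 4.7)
The plan is to invoke the partially degenerate examples of Brock~\cite{brock-itn}, in which the algebraic and geometric limits of a sequence of quasi-Fuchsian groups disagree, and to show that the extra parabolic structure of the geometric limit attracts the CT-maps along a countable $\Gamma$-orbit in $\Lambda_\G$ to a point lying outside $\Lambda_{G_\infty}$.

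\textbf{Construction.} Take $\Gamma = \pi_1(S)$ a Fuchsian uniformization of $S$ closed of genus at least $2$. Fix a simple closed curve $\alpha \subset S$ and a pseudo-Anosov $\phi$ supported on a proper essential subsurface $\Sigma \subset S$ with $\alpha \subset \partial \Sigma$. Let $\rho_n \co \Gamma \to G_n$ be the marked quasi-Fuchsian representation with conformal data $(X, \phi^n \cdot Y)$ for fixed $X, Y \in \Teich(S)$. Brock's iteration theorem produces an algebraic limit $\rho_\infty \co \Gamma \to G_\infty$ which is partially degenerate --- the $X$-end remains quasi-Fuchsian while the $Y$-end is simply degenerate, with ending lamination $\mu$ equal to the stable lamination of $\phi$ on $\Sigma$ --- but not a strong limit: there is a sequence $\beta_n \in \Gamma$ with $\beta_n \to \infty$ for which $\rho_n(\beta_n) \to \tau$, with $\tau$ parabolic, $\tau \notin G_\infty$, and $\tau$ generating together with $\rho_\infty(\alpha)$ a rank-$2$ cusp in $\HHH^3 / \hat G$.

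\textbf{Failure set.} Let $\xi \in \hat{\mathbb C}$ denote the fixed point of $\tau$. Evans' theorem~\cite{evans1, evans2} gives $\xi \in \Lambda_{\hat G}$; because $\tau \notin G_\infty$ and $\Lambda_{G_\infty}$ is the closure of an invariant topological disk by partial degeneracy, a direct inspection yields $\xi \notin \Lambda_{G_\infty}$. Hence any $p \in S^1$ with $\hat i_n(p) \to \xi$ exhibits the required pointwise non-convergence, since $\hat i_\infty(p) \in \Lambda_{G_\infty}$. I would choose $p_0 \in \Lambda_\G = S^1$ to be the non-fixed endpoint in $S^1$ of a leaf of the lift of $\mu$ whose other endpoint is a fixed point of $\alpha$; only finitely many such leaves exist modulo $\Gamma$, so the full $\Gamma$-orbit of such endpoints is countable. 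Geometrically, for each $n$ the bi-infinite geodesic in $\HHH^3 / G_n$ corresponding to this leaf spirals around the Margulis tube of the short core curve $\rho_n(\alpha)$ a number of times comparable to $n$ before escaping to $\hat i_n(p_0)$; as the tube collapses geometrically into the rank-$2$ cusp of $\HHH^3 / \hat G$ at $\xi$, this far endpoint is swept into $\xi$, giving $\hat i_n(p_0) \to \xi \neq \hat i_\infty(p_0) \in \Lambda_{G_\infty}$. Equivariance then propagates this failure to the countable orbit $\Gamma \cdot p_0$.

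\textbf{Main obstacle.} The hard step is making the spiraling heuristic rigorous: one must verify that the CT-extension $\hat i_n$ really displaces the chosen endpoint toward $\xi$ rather than tracking it through $\Lambda_{G_n}$ toward $\hat i_\infty(p_0)$. This requires combining Minsky's model for the simply degenerate end of $G_\infty$ with Brock's explicit geometric picture of $\hat G$ (so that $\xi$ can be identified unambiguously and $\xi \notin \Lambda_{G_\infty}$ certified), together with an equivariant bookkeeping of how the wrapping exponent $n$ built into $\rho_n$ shifts the relevant geodesic endpoint in the covering space. Once the single displacement $\hat i_n(p_0) \to \xi$ is established, the countability of the failure set follows immediately from $\Gamma$-equivariance applied to the orbit of $p_0$.
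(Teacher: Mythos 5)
Your choice of Brock's partially degenerate examples, and your identification of the countable failure set (the $\Gamma$-orbit of the free endpoints of the boundary leaves of the crown of the ending lamination, i.e.\ the leaves sharing an ideal endpoint with a lift of the dividing curve), both match the paper. But the mechanism you propose for non-convergence is broken, and the error is internal to your own setup. You let $\xi$ be the fixed point of the new parabolic $\tau$ in the geometric limit and claim $\xi \notin \Lambda_{G_\infty}$; yet you also assert that $\tau$ and $\rho_\infty(\alpha)$ generate a rank-two cusp, which forces $\mathrm{Fix}(\tau)=\mathrm{Fix}(\rho_\infty(\alpha))$ --- a parabolic fixed point of $G_\infty$, hence a point of $\Lambda_{G_\infty}$. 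The inference ``$\tau\notin G_\infty$ implies $\mathrm{Fix}(\tau)\notin\Lambda_{G_\infty}$'' is simply false. Worse, by Theorem~\ref{bowditch-ct} applied to the degenerate end, $\hat i_\infty$ sends the free endpoint $p_0$ of a crown boundary leaf exactly to that parabolic fixed point; so $\xi=\hat i_\infty(p_0)$, and if your heuristic ``$\hat i_n(p_0)\to\xi$'' were correct it would establish pointwise \emph{convergence} at $p_0$, the opposite of what is needed.

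The correct argument runs the other way. One first shows $\hat i_\infty(p_0)=\hat i_\infty(p)$, where $p$ is an endpoint of the lift of $\sigma$ bounding the same lifted crown, and then shows that $\hat i_n(p_0)$ stays \emph{away} from this point: the concatenation of the short arc around the Margulis tube with the unstable boundary leaf on the \emph{upper} boundary of the convex core is a quasigeodesic from the basepoint to $\hat i_n(p_0)$ with constants uniform in $n$ (because the model metric contracts unstable directions going up), and an explicit computation (Lemma~\ref{bangle}) shows the visual angle at the basepoint between this ray and the ray to $\hat i_n(p)$ is bounded away from $0$ uniformly in $n$; since $\hat i_n(p)\to\hat i_\infty(p)$ by algebraic convergence, no subsequence of $\hat i_n(p_0)$ can reach $\hat i_\infty(p_0)$. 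Your proposal contains no substitute for either the uniform quasigeodesic or the angle estimate --- you flag the ``spiraling heuristic'' as unproved yourself --- and the geodesic you describe is not the relevant one: the leaf through $p_0$ has its \emph{other} end spiraling onto the core of the tube forever, while the ray from the basepoint to $\hat i_n(p_0)$ crosses the tube once the short way and then tracks the leaf on the top surface. As it stands, the proposal neither certifies a limit for $\hat i_n(p_0)$ nor separates any subsequential limit from $\hat i_\infty(p_0)$, so the claimed failure of pointwise convergence is not established.
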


Implicit in the statement of  Theorem~\ref{thm:strong=unif} is the existence of the $CT$-map from $\Lambda_{\G}$ to $\Lambda_{G_{\infty}}$. This result has a long history which we do not intend to repeat in detail here.
 The most general result, in which $G = G_{\infty}$ is an arbitrary torsion free non-elementary Kleinian group, can be found in~\cite{mahan-kl}. The restricted case in which $\G$ is a surface group and $G$ is singly or doubly degenerate, is the main result of~\cite{mahan-split}, see Section~\ref{sec:thmA} below.
The original seminal case in which $M = \HHH^3/G$ is the cyclic cover of a  $3$-manifold fibering over  the circle with fibers a closed surface is of course due to Cannon and Thurston~\cite{CT}; this was extended to the case in which $G$ is a surface group with  a lower bound on the lengths of loxodromics in~\cite{bowditch-ct} and ~\cite{mahan=jdg}, or more generally when $M$ is an arbitrary hyperbolic manifold with incompressible boundary in~\cite{mahan-pared}.    The older history in the case in which $G$ is geometrically finite is discussed in~\cite{mahan-series1}.

In~\cite{mahan-series1} we introduced general criteria for uniform and pointwise convergence of $CT$-maps, called UEPP and EPP respectively. These  compare the geometry of  the obvious embedding  of the Cayley graph of the base group $\G$ into $\HHH^3$,  to the corresponding embeddings for the groups $G_n, G_{\infty}$. The main work in this paper consists in verifying that these criteria hold (in the case of Theorem~\ref{thm:strong=unif}) or understanding why they do not (in the case of Theorem~\ref{thm:alg=ptwise}). After slightly reformulating the condition UEPP, we see that for the case of strictly type preserving maps of surface groups, the needed condition has essentially already been proved  in~\cite{mahan-split}. In order to explain this, we give in Section~\ref{sec:unbounded}  a brief outline of the relevant parts of the arguments in~\cite{mahan-split}.  For the benefit of readers who have not gone through all of this previous work, which in turn depends heavily on  the Minsky model of degenerate Kleinian groups, we preface this by briefly sketching in Section~\ref{sec:bndgeom} how the argument goes in the case of groups of bounded geometry, thus reproving Miyachi's theorem~\cite{miyachi}. Our proof in this case follows easily using the method explained in~\cite{mahan=jdg} and~\cite{mahan=ramanujan} and is independent of~\cite{mahan-split}.   

Let $R$ be a surface with boundary and $\Lambda$ a lamination on $R$. A complementary region in $R \setminus \Lambda$ is called a  \emph{crown domain}
if it contains a component  of $\partial R$. 
The counter example in Theorem~\ref{thm:alg=ptwise} arises from Brock's examples of 
a sequence of  quasi-Fuchsian groups $G_{n}$  converging algebraically but not strongly  to a partially degenerate group  $G_{\infty}$.
More precisely, we prove the following, which immediately implies Theorem~\ref{thm:alg=ptwise}:
\begin{introthm}\label{brockexample}
Fix a closed hyperbolizable surface $S$ together with  a separating simple closed curve $\sigma$, dividing $S$  into two pieces $L$ and $R$. 
Let $\alpha$ denote an automorphism of $S$ such that $\alpha |_{L}$ is the identity and $\alpha |_{R} = \chi$ is a pseudo-Anosov diffeomorphism of
$R$ fixing the boundary $\sigma$. Let $X$ be a hyperbolic structure on $S$ and let $G_n$ be the quasi-Fuchsian group given by the simultaneous uniformization of $(\alpha^n(X), X)$. Let $G_\infty$
denote the algebraic limit of the sequence $G_n$, suitably normalized by a basepoint in the lift of the lower boundary $X$. Let  $\hat i_n
 \co \Lambda_{G_0} \to \Lambda_{G_n}$, $n \in \mathbb N \cup \infty$,   be the corresponding $CT$-maps and  let $\xi \in \Lambda_{G_0}$. Then  $\hat i_n (\xi )$  converges  to $\hat i_\infty (\xi )$  if and only if $\xi$ is not   the endpoint of the lift to $\HHH^2$ of a boundary leaf, other than $\sigma$,  of
the crown domain of the unstable lamination of $\chi$, viewed as a lamination on the surface $R$.
\end{introthm}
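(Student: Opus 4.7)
The plan is to use the partially degenerate structure of $G_\infty$ to describe $\hat i_\infty$ explicitly, and then to exploit the discrepancy between $G_\infty$ and the geometric limit $\hat G$ of the $G_n$ to locate the failure of pointwise convergence precisely at the spike endpoints of the crown. I would first set up the Minsky-style model for $G_\infty$. The normalization by a basepoint in a lift of the lower boundary $X$ forces the lower end and, since $\alpha|_L = \mathrm{id}$, also the $L$-portion of the upper end to carry the original Fuchsian structure; the $R$-portion of the upper end degenerates into a simply degenerate end with ending lamination equal to the unstable lamination $\Lambda^+$ of $\chi$ on $R$. Lifts of $\sigma$ partition $\HHH^2$ into lifts $\tilde L$ and $\tilde R$, and $\Lambda_{G_0} = \partial\HHH^2$ decomposes correspondingly into quasi-Fuchsian $L$-arcs and singly degenerate $R$-arcs. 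Using the description of Cannon--Thurston maps for simply degenerate ends from~\cite{mahan-split}, I would show that the restriction of $\hat i_\infty$ to each $R$-arc collapses each lift $\tilde C \subset \tilde R$ of the crown domain $C$ to a single point $p(\tilde C) \in \Lambda_{G_\infty}$, identifying the endpoints of the distinguished lift $\tilde\sigma\subset\partial\tilde C$ with all endpoints of the lifted boundary spikes $\tilde\ell_i$, $\ell_i\neq\sigma$.

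For $\xi$ not in the exceptional countable set --- either on an $L$-arc, or on an $R$-arc but not an endpoint of any lifted spike --- I would verify the pointwise UEPP criterion of~\cite{mahan-series1}. On $L$-arcs this is immediate from the quasi-Fuchsian convergence of the $L$-factors, since $\alpha|_L = \mathrm{id}$. At an $R$-arc point $\xi$ that is not a spike endpoint, the geodesic ray in $\HHH^2$ to $\xi$ projects in the Minsky model of $G_\infty$ to a quasi-geodesic that eventually stays in thick product regions of the $R$-end and avoids all crown polygons; the corresponding rays in the models for $G_n$ built from the hierarchy data of $(\alpha^n(X), X)$ then track this ray with uniformly bounded Hausdorff distance on arbitrarily long initial segments, yielding $\hat i_n(\xi) \to \hat i_\infty(\xi)$.

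The remaining points force the counterexample. Let $\xi$ be an endpoint in $\partial\HHH^2$ of a lift $\tilde \ell_i$ of a spike $\ell_i \neq \sigma$. In the quasi-Fuchsian group $G_n$, $\hat i_n(\xi)$ is a single point of the Jordan curve $\Lambda_{G_n}$ sitting in a cyclic order with the fixed points of $\rho_n(\sigma)$ and with the images of the other spike endpoints; this cyclic order is independent of $n$, so $\hat i_n(\xi)$ is separated from the other crown vertex images by intervals whose lengths do not shrink to zero along the sequence. Passing to the geometric limit $\hat G$ of the $G_n$ --- which by Brock's analysis~\cite{brock-itn} strictly contains $G_\infty$, the extra elements coming from the accumulated powers of $\chi$ acting on $R$ --- the sequence $\hat i_n(\xi)$ subconverges to a point $q \in \Lambda_{\hat G}$ that corresponds to the endpoint of a lift of $\ell_i$ in a \emph{translate of $\tilde C$ outside the $G_\infty$-orbit}. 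Since $\hat i_\infty(\xi) = p(\tilde C)$ lies in the $G_\infty$-orbit of the collapsed crown vertex, we obtain $q \neq p(\tilde C) = \hat i_\infty(\xi)$, and pointwise convergence fails.

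The \emph{main obstacle} is this last identification: producing the geometric limit $\hat G$ explicitly, locating the subsequential limit $q$ of $\hat i_n(\xi)$, and verifying $q \neq p(\tilde C)$. This requires combining Brock's description of the geometric limit with fine control over how CT-rays track past the crown, and is the only step where the specific pinching structure of Brock's example is essential. For non-spike endpoints, the tracking argument proceeds uniformly in $n$ to the same point of the sphere in both $G_\infty$ and $\hat G$, so no discrepancy arises.
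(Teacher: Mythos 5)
Your overall architecture (describe $\hat i_\infty$ via the ending lamination and Bowditch-type identifications, verify a pointwise UEPP-style criterion off the exceptional set, and show failure at the spike endpoints) matches the paper's strategy in outline, but the decisive step --- non-convergence at the spike endpoints --- is not actually established, and one of the two arguments you offer for it is wrong.

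First, the claim that $\hat i_n(\xi)$ is ``separated from the other crown vertex images by intervals whose lengths do not shrink to zero'' because the cyclic order on the Jordan curve $\Lambda_{G_n}$ is independent of $n$ does not follow: preservation of cyclic order gives no metric lower bound whatsoever, and indeed at every point where convergence \emph{does} hold, entire arcs of $\Lambda_{G_n}$ collapse in the limit while their cyclic order is preserved throughout. Second, the identification of the subsequential limit $q$ of $\hat i_n(\xi)$ with a point of the geometric limit's limit set outside the $G_\infty$-orbit is exactly the content of the theorem at these points, and you explicitly flag it as the ``main obstacle'' without supplying an argument. So the proposal proves the easy identification $\hat i_\infty(\xi^u)=\hat i_\infty(p)$ but not the divergence $\hat i_n(\xi^u)\not\to\hat i_\infty(p)$.

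The missing idea is the paper's explicit uniform quasi-geodesic. Normalize so that the lift of the unstable boundary leaf is fixed by the lifted $\chi$ (this is why the theorem insists there be no Dehn twisting about $\sigma$). Then the path from the basepoint $O$ that goes the \emph{short} way around the Margulis tube $T_n$ through the undeformed $L\times[0,1]$ side to the upper basepoint $O_n^+$, and then runs along the lift of the unstable boundary leaf in the top boundary surface of $K_n$, is a quasi-geodesic in $\widetilde K_n$ with constants uniform in $n$ (the leaf is uniformly quasi-geodesic on the top surface because vertical projection in the singular flat model $dt^2+c^{-2t}dx^2+c^{2t}dy^2$ contracts along $\mu^u\times[0,n]$). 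A hyperbolic trigonometry estimate at $O$ then shows the visual angle between this quasi-geodesic ray (ending at $\hat i_n(\xi^u)$) and the geodesic to the fixed point $\hat i_n(p)$ of $\rho_n(g_\sigma)$ is bounded away from $0$ uniformly in $n$, even as the tubes $T_n$ pinch. Since $\hat i_n(p)\to\hat i_\infty(p)$ by algebraic convergence and $\hat i_\infty(p)=\hat i_\infty(\xi^u)$, no subsequence of $\hat i_n(\xi^u)$ can converge to $\hat i_\infty(\xi^u)$. Separately, your convergence argument for rays ending deep in a lift of $R$ is too thin: the paper needs a comparison with the \emph{strongly} convergent auxiliary sequence uniformizing $(\chi^n(\Sigma),\Sigma)$ on the punctured surface $\Sigma$, plus an electric-geometry transfer and a horoball-counting argument that invokes Bowditch's theorem precisely to exclude the spike endpoints; uniform tracking ``in the hierarchy models'' does not come for free since the relevant blocks of $K_n$ change with $n$.
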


The outline of the paper is as follows. 
In Section~\ref{sec:background} we set up background and notation, in particular reviewing briefly
 what we need
from the  theory of  hyperbolic spaces and electric geometry in~\ref{sec:relhyp}.  These
techniques are central in~\cite{mahan-split}, and are also used
here in the discussion of Theorem~\ref{brockexample}.

 In Section~\ref{sec:CTMaps} we  recall results from~\cite{mahan-series1}
on Cannon-Thurston maps, in particular we explain our convergence criterion UEPP.  
 In Section~\ref{sec:thmA} we prove Theorem~\ref{thm:strong=unif}. As discussed above, we first  give a brief discussion of a proof in the case of  bounded geometry, that is, when the injectivity radius of all manifolds in the sequence is uniformly bounded below. This is essentially Miyachi's theorem referred to above.
 We then turn to the general situation, outlining as we go the relevant steps in the proof for a single $CT$-map as in~\cite{mahan-split}.   Finally in Section~\ref{sec:brock} we explain the counter examples to pointwise convergence, explaining the Brock examples and then proving Theorem~\ref{brockexample}. 


\noindent {\bf Acknowledgments:} This work was done in part while the first author was visiting Universit{\'e} Paris-Sud XI under the Indo-French 
collaborative programme ARCUS. He gratefully acknowledges their support
and hospitality.


 \section{Background}\label{sec:background}

  \subsection{Kleinian groups}
 
 \label{sec:basics}

 A \emph{Kleinian group} $G$ is a discrete subgroup of $PSL_2 (\mathbb{C})$. As such it acts as a properly discontinuous group of isometries of hyperbolic $3$-space $\HHH^3$, whose boundary we identify with  the Riemann sphere $\Chat = \C \cup \infty$.
 As in~\cite{mahan-series1}, all groups in this paper will be finitely generated and torsion free, so that  
 $ M = \HHH^3/G$ is a hyperbolic $3$-manifold.  
  The \emph{limit set} $\Lambda_G \subset \Chat$  is the    set of accumulation points of any $G$-orbit. 
  
  A Kleinian group  is 
\emph{geometrically finite} if it has a fundamental polyhedron in $\HHH^3$ with finitely many faces; a group which is not geometrically finite is also called \emph{degenerate}. 
 The  point of this paper is to investigate the extension of~\cite{mahan-series1} to the degenerate case. We say a group is  \emph{totally degenerate} if it is not geometrically finite and $\Lambda_G= \Chat$.
The structure of degenerate groups has recently been elucidated by the work of Minsky et al.~\cite{minsky-elc1, minsky-elc2}   on the ending lamination theorem and the tameness theorem of Agol~\cite{agol} and Calegari and Gabai~\cite{calegari}. This paper rests heavily on these results.

A Kleinian group  $G$ is  a  \emph{surface group}, if there is a hyperbolic surface $S$, together with a discrete faithful representation $\rho \co \pi_1(S) \to G$. 
The corresponding manifold $  \HHH^3/G$  is homeomorphic to $S \times \mathbb R$, see \cite{bon}.
It is singly or doubly degenerate according as one or both of its ends are geometrically infinite with filling ending laminations.

\subsection{Balls and geodesics}  \label{sec:balls} We will be working in  hyperbolic space  $\HHH^n$ for $n = 2,3$. We denote the 
 hyperbolic metric on $\HHH^n$ by $d_{\HHH}$  or occasionally $d_{\HHH^n}$; sometimes we explicitly  use the ball model $\mathbb B$ with centre $O$ and denote by $d_{\mathbb E}$   the Euclidean metric  on $\mathbb B \cup \Chat$.
For $P \in \HHH^n$,  write $B (P; R)$, or when needed $B_{\HHH}(P; R)$ or even $B_{\HHH^n}(P; R)$, for the hyperbolic ball centre $P$ and  radius $R$.  Let $\beta $ be a path in $\HHH^n$ with endpoints $X,Y$. We write $[\beta]$  or $[X,Y]$ for the $\HHH^n$-geodesic  from $X$ to $Y $.

\subsection{The Cayley graph}\label{sec:cayley}

Let $G$ be a finitely generated Kleinian group with generating set $G^* = \{e_1, \ldots, e_k \}$.  We assume throughout that $G^*$ is symmetric, in the sense that $g \in G^*$ if and only if $g^{-1} \in G^*$ for any $g \in G$. The Cayley graph $\mathcal GG$  of $G$ is the graph whose vertices are elements $g \in G$ and which has an edge between $g,g'$ whenever $g^{-1}g' \in G^*$.  The graph metric $d_G$ is defined as the edge length of the shortest path between vertices so that $d_G(1,e_i) =1$ for all $i$, where $1$ is the unit element of $G$.  Let
 $|g|$ 
   denote the word length of $g \in   G$ with respect to $G^*$, so that $|g| = d_G(1,g)$. For $X \in \Gr G$, we denote by $B_{G}(X; R) \subset \Gr G$  the $d_G$-ball centre $X$ and  radius $R$.

Choose a  basepoint  $  O_G \in \HHH^3$ which is not a fixed point of any element of $G$. One may if desired assume the basepoint is the centre $O$ of the ball model $\mathbb B$ as above. For simplicity, we do this throughout the paper unless indicated otherwise. 
 Then $\mathcal GG$  is immersed in $\HHH^3$ 
by the map $j_{G}$ which sends $g \in G$ to $j_{G}(g) = g \cdot O$, and which sends the edge joining $g,g'$ to the $ \HHH^3$-geodesic joining $j_{G}(g),j_{G}(g')$. In particular,  $j_G(1) = O$. Note that using the ball model of $\HHH^3$, the limit set $\Lambda_G  $ may be regarded as the completion of $j_{G}(\mathcal GG)$  in the Euclidean metric $d_{\mathbb E} $ on   $\mathbb B \cup \Chat$.

 \subsection{Algebraic and Geometric Convergence}  \label{sec:alggeolts}
Let $\G$ be a geometrically finite Kleinian group. A sequence of group isomorphisms
 $\rho_n\co \G \to PSL_2 (\mathbb{C}),   n = 1,2 \ldots  $ is  said to converge  to the representation $\rho_{\infty}\co \Gamma \to PSL_2 (\mathbb{C})$  \emph{algebraically}
if  for each $g \in \G$,  $\rho_n(g) \to \rho_{\infty} (g)$ as elements of $PSL_2 (\mathbb{C})$.  
 The representations converge \emph{geometrically}   if  $(G_n = \rho_n(G))$ converges as a sequence of closed subsets of $\PSL$
to  $G_g \subset PSL_2 (\mathbb{C})$. Then  $G_g$ is a   Kleinian group called the
\emph{geometric limit} of $(G_n)$.    The sequence   $(\rho_n)$ converges  \emph{ strongly }
to  $\rho_{\infty}(G)$ if $\rho_{\infty}(G)= G_g$ and the convergence is both geometric and algebraic.  If a sequence of  groups converge algebraically, they have a geometrically convergent subsequence, see~\cite{marden-book} Theorem 4.4.3.

Alternatively, following Thurston \cite{thurstonnotes}, see also for example~\cite{CEG} Chapter 3, we say that a sequence of manifolds
with base-frames $(M_n, \omega_n )$ converges {\it geometrically }
(or in the $C^\infty$-Gromov-Hausdorff topology)
to a manifold with base-frame $(M_{\infty}, \omega_{\infty})$ if for each compact submanifold $C\subset M_{\infty}$ containing the base-frame $\omega_{\infty}$, there are smooth embeddings
$\psi_n: C \rightarrow M_n$ (for all sufficiently large $n$) which map base-frame  to base-frame  and such that $\psi_n$
converges to an isometry in the $C^\infty$-topology. 
 Kleinian groups $G_n$ are said to converge \emph{geometrically} to $G_{\infty}$ if the corresponding framed manifolds 
$(M_n = \HHH^3/G_n, \omega_n)$ converge geometrically
to $(M_{\infty} = \HHH^3/G_{\infty}, \omega_{\infty})$, where  the base-frames $\omega_n, \omega_{\infty}$ are all  the projection of a fixed  base-frame  in $\HHH^3$.
The sequence $((M_n, \omega_n ))$ converges  \emph{strongly}
to  $(M_{\infty}, \omega)$  if the convergence is geometric and in addition the convergence of  $(\rho_n)$  to $\rho_{\infty} $ is algebraic.  
We remark that changing the  basepoints in the above discussion may result in a different geometric limit.

The relation between these definitions is the following. Fix once and for all a standard base frame $\Omega$ in $\HHH^3$, with  basepoint at the origin $O$ in the ball model of hyperbolic $3$-space.
Given a framed manifold $(M, \omega )$, there is a unique developing map $(\widetilde M, \tilde \omega) \to \HHH^3$ (where $\widetilde M$ is the universal cover of $M$) which sends a fixed lift  $\tilde \omega$ of $\omega$ to $\Omega \in \HHH^3$. The induced  holonomy homomorphism sends $ \pi_1(M,o)$ to a discrete  torsion free subgroup  of $SL(2,\C)$, where $o \in M$ is the  basepoint of $\omega$. By for example~\cite{CEG} Theorem 3.2.9, this map is a homeomorphism  with appropriate topologies, so  that convergence of manifolds  in the sense of Thurston is equivalent to geometric convergence in the first sense defined above, see for example \cite{marden-book} Chapter 4 or \cite{kap} Theorem 8.11. 
 In particular, the map  $\psi_n: C \rightarrow M_n$  is  the projection to the quotient manifolds of  a bi-Lipschitz embedding $\tilde \psi_n \co B(O;R) \to \HHH^3$ where $B(O;R) \subset \HHH^3$ is a large ball whose projection to $\HHH^3/G_{\infty}$ contains $C$, see for example~\cite{and-can1} Lemma 9.6.

 \subsection{Scott cores}  \label{sec:cores}

 Recall that a Scott core of a $3$-manifold $V$ is a compact connected $3$-submanifold  $K_V$ such that the inclusion $K_V \hookrightarrow V$ induces an isomorphism on fundamental groups.   The Scott core is unique up to isotopy \cite{mms}. Note that in general, the Scott core may be much smaller than the convex core, even when the group is convex cocompact. We shall need the following relationship between the Scott core and the ends of $V$. 
  
 \begin{lemma}[\cite{bon} Proposition 1.3, \cite{kap} Theorem 4.126] \label{cutoffend}
Let $K_V$ be  a Scott core of a $3$-manifold $V$. There is a  bijective correspondence between the ends of $V$ and boundary components of $V \setminus K_V$.  Hence each component of $\dd K_V$ bounds a non-compact  component of $V \setminus K_V$ and each of these components is an end of $V$.
\end{lemma}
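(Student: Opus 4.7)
The plan is to reduce the statement to a product-structure result for the complement of the Scott core, and then to read off the bijection with ends. By the tameness theorem of Agol and Calegari--Gabai, $V$ is homeomorphic to the interior of a compact $3$-manifold $\bar V$, so the ends of $V$ correspond bijectively to components of $\dd \bar V$, each possessing a neighbourhood basis of product collars $\Sigma \times [t,\infty)$. Since $V$ is aspherical and $K_V \hookrightarrow V$ induces a $\pi_1$-isomorphism, Whitehead's theorem makes it a homotopy equivalence; after an isotopy in general position one can realise $K_V$ as a codimension-zero submanifold of $\bar V$ with incompressible boundary, modifying $K_V$ by the loop theorem to absorb compression disks if needed and checking that the Scott core property survives.

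Next I would analyse the compact complement $W = \bar V \setminus \Int(K_V)$ component by component. Each component $W_j$ is an irreducible compact $3$-manifold whose boundary splits as $F_j := \dd K_V \cap W_j$ and $E_j := \dd \bar V \cap W_j$. A Van Kampen argument, exploiting that $\pi_1(K_V)\to \pi_1(\bar V)$ is an isomorphism, forces both $F_j \hookrightarrow W_j$ and $E_j \hookrightarrow W_j$ to induce $\pi_1$-isomorphisms; otherwise gluing along $F_j$ would either introduce generators or impose relations not present in $\pi_1(\bar V)$. Waldhausen's cobordism theorem then upgrades each such cobordism to a homeomorphism $W_j \cong F_j \times [0,1]$, so that each $F_j$ is connected (a single component of $\dd K_V$), matched bijectively with a single component $E_j$ of $\dd \bar V$.

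Passing back to the open manifold $V$, the complement $V \setminus K_V$ is the disjoint union of open collars $F_j \times [0,\infty)$, each non-compact and forming a neighbourhood basis of the end of $V$ corresponding to $E_j$. This yields the asserted bijection between components of $\dd K_V$, components of $V \setminus K_V$, and ends of $V$, establishing both sentences of the lemma simultaneously.

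The main obstacle is the middle step: verifying the hypotheses of Waldhausen's cobordism theorem, specifically incompressibility of $F_j$ in $W_j$. One must modify $K_V$ to arrange this while retaining the defining $\pi_1$-isomorphism, and it is here that the interplay between the loop theorem, irreducibility of $V$, and the definition of Scott core does the real work. Once this is in place, tameness and the product geometry of collars finish the argument routinely.
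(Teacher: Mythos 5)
First, a point of comparison: the paper offers no proof of this lemma at all --- it is quoted from Bonahon (Proposition 1.3) and Kapovich (Theorem 4.126) --- so your attempt can only be judged on its own terms. On those terms there is a genuine gap: you are trying to prove something strictly stronger than the lemma, namely that $V\setminus K_V$ is a disjoint union of product collars $F_j\times[0,\infty)$, and that stronger statement is false for a general Scott core. The paper's definition only asks that $K_V$ be a compact connected submanifold inducing a $\pi_1$-isomorphism; such cores are not unique up to isotopy and their complements need not be products. Concretely, let $V$ be an open solid torus and let $K_V$ be a solid torus embedded with winding number one but knotted (a Mazur-type satellite pattern). Then $\pi_1(K_V)\to\pi_1(V)$ is an isomorphism, so $K_V$ is a Scott core, but $W=V\setminus \operatorname{Int}(K_V)$ is the exterior of a knotted pattern: $\pi_1(\partial K_V)=\mathbb{Z}^2\to\pi_1(W)$ is not even surjective (its image is peripheral in a non-abelian group), $W$ is not homeomorphic to $T^2\times[0,\infty)$, and Waldhausen's theorem has no hypotheses to latch onto. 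Yet the lemma holds there, since it asserts only a bijection between ends and complementary components. This kills two of your steps at once: the van Kampen argument does not force $\pi_1(F_j)\to\pi_1(W_j)$ to be an isomorphism --- surjectivity of $\pi_1(K_V)\to\pi_1(V)$ only says the pushout collapses after adjoining the relations of $\pi_1(K_V)$, not that each $\pi_1(W_j)$ is peripherally generated --- and the product conclusion drawn from Waldhausen is simply not available. Separately, your preparatory step of making $\partial K_V$ incompressible cannot be carried out when $\pi_1(V)$ splits as a free product (e.g.\ $V$ an open handlebody): every core then has boundary compressible into $K_V$, and compressing either changes $\pi_1(K_V)$ or fails to produce a core, while the lemma is stated, and used in the paper (e.g.\ in Corollary 2.4), without any incompressibility hypothesis.

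The appeal to tameness is also misplaced. The lemma concerns an arbitrary $3$-manifold admitting a Scott core; Bonahon's proof long predates Agol and Calegari--Gabai and uses none of that machinery, and invoking tameness here risks circularity since compact-core end theory is part of the background for the tameness literature. The actual content is elementary and much weaker than a product structure: each component $U$ of $V\setminus K_V$ has compact frontier contained in $\partial K_V$, so there are finitely many of them; one then shows, using surjectivity of $\pi_1(K_V)\to\pi_1(V)$ together with irreducibility, that no such component is precompact and that no component contains more than one end. Those two assertions are what need proving, and neither appears in your argument. I would either supply them directly or, as the paper does, simply cite Bonahon and Kapovich.
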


   Let $\rho_n \co \G \to \PSL$ be a sequence of representations converging strongly
   to $\rho_{\infty}$.  Fixing the base-frames as the projections $\omega_n$ of the frame $\Omega$ in $ \HHH^3$ to $M_n = \HHH^3/G_{n}$, $n \in \mathbb N \cup  \infty$, we obtain a 
   corresponding  sequence of  framed hyperbolic manifolds $(M_n , \omega_n)$  converging geometrically to $M_{\infty} = \HHH^3/G_{\infty}$.  Set $N = \HHH^3/\G$ and let $K_N$ be a Scott core of $N$, chosen such that the baseframe $\omega_N$ for $N$ has basepoint   $o_N \in K_N$.  The representations $\rho_{n}$ induce homotopy equivalences   $\phi_n \co  K_N \rightarrow M_{n}$. We can lift $\phi_n$ to maps $\tilde \phi_n \co  \til K_N \rightarrow \til M_{n}$ for $n  \in \mathbb N \cup  \infty$ and note that by our choices that  $\tilde \phi_n(\tilde \omega_N)$ converges to $\tilde \phi_{\infty}(\tilde \omega_N)$.

In general the homotopy equivalences $\phi_n$ may not be homeomorphisms.  However in the situation of strong convergence, 
the proof of  \cite{CM} Proposition 3.3 or   \cite{and-can1}  Lemma 9.7, (see also Lemma 3.6 and the first part of the proof of Theorem A in  \cite{and-can-mc}) gives: 
\begin{lemma}  \label{cores} Let $\Gamma$ be a geometrically finite group and let $\rho_n $ be a  sequence of discrete faithful representations of $\Gamma$ converging strongly to $\rho_{\infty}$. Let $ \mathcal K = K_{M_{\infty}}$ be a compact core for $M_{\infty} = \HHH^3/\rho_{\infty}(\Gamma)$.   Let $\psi_n \co  \K \to M_n$ be the  bi-Lipschitz embeddings coming from the geometric convergence, inducing maps $(\psi_n)_* \co \rho_\infty(\Gamma) \to \rho_n(\G)$. Then  for all large enough $n$,  $(\psi_n)_* = \rho_n \circ \rho^{-1}_{\infty}$ and $\psi_n ( \K) $ is a compact core for $M_n = \HHH^3/\rho_{n}(\Gamma)$.     \end{lemma}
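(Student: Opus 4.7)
The plan is to first establish the algebraic identification $(\psi_n)_* = \rho_n \circ \rho_\infty^{-1}$, from which the compact core conclusion follows almost immediately. I would work at the level of the universal covers via the bi-Lipschitz lifts guaranteed by geometric convergence.

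First I would pick a finite symmetric generating set $S \subset G_\infty = \rho_\infty(\G)$ and, for each $s \in S$, choose a loop $\alpha_s$ in $\K$ based at the basepoint $o_\infty$ of $\omega_\infty$ whose homotopy class corresponds to $\rho_\infty^{-1}(s) \in \pi_1(\K, o_\infty) \cong \G$. Lifting $\alpha_s$ to a path $\tilde\alpha_s$ in $\HHH^3$ starting at $O$, the endpoint is $s \cdot O$, and the finite union of lifts lies in some ball $B(O;R)$. By the discussion in Section~\ref{sec:alggeolts}, for all sufficiently large $n$ the map $\psi_n$ lifts to a bi-Lipschitz embedding $\tilde\psi_n \co B(O;R+1) \to \HHH^3$ fixing $\Omega$ and converging in the $C^\infty$-topology to the identity on $B(O;R)$.

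Next I would compute $(\psi_n)_*(s)$ by lifting the loop $\psi_n \circ \alpha_s$ starting at $O$: this lift is $\tilde\psi_n \circ \tilde\alpha_s$, and its endpoint equals $(\psi_n)_*(s) \cdot O$ under the standard identification of $\pi_1(M_n, o_n)$ with $G_n$ via deck transformations. Hence $(\psi_n)_*(s) \cdot O = \tilde\psi_n(s \cdot O)$. Geometric convergence gives $\tilde\psi_n(s \cdot O) \to s \cdot O$, while algebraic convergence gives $(\rho_n \circ \rho_\infty^{-1})(s) \cdot O \to s \cdot O$. Both $(\psi_n)_*(s)$ and $(\rho_n \circ \rho_\infty^{-1})(s)$ therefore lie in $G_n$ and move $O$ to points converging to the same limit $s \cdot O$. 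Discreteness of $G_\infty$, combined with the fact that geometric convergence provides, for any fixed compact set $C \subset \HHH^3$, a bijective $\PSL$-approximation between elements of $G_n$ and elements of $G_\infty$ that move $O$ into $C$, forces the two elements to coincide for $n$ sufficiently large. Since $S$ is finite, this yields $(\psi_n)_* = \rho_n \circ \rho_\infty^{-1}$ for all large $n$.

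The compact core conclusion then follows briefly: $\psi_n(\K)$ is a compact embedded submanifold of $M_n$, and the inclusion induces $(\psi_n)_* = \rho_n \circ \rho_\infty^{-1}$, which is an isomorphism $G_\infty \to G_n$; hence $\psi_n(\K)$ is a compact core for $M_n$. The main obstacle is the identification step: one has to leverage both algebraic and geometric convergence simultaneously — which is exactly what strong convergence provides — together with discreteness, in order to upgrade two a priori independent approximations into an exact equality. Neither purely algebraic nor purely geometric convergence would suffice.
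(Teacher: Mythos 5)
Your argument is correct. Note that the paper does not actually prove this lemma: it is stated as following from the proofs of \cite{CM} Proposition 3.3 and \cite{and-can1} Lemma 9.7, and what you have written is essentially a self-contained reconstruction of that standard argument --- compute $(\psi_n)_*$ on a finite generating set by lifting loops through $\tilde\psi_n$, compare with $\rho_n\circ\rho_\infty^{-1}$ using algebraic convergence, and use geometric convergence to force the two approximations of $s$ to coincide; the compact core statement is then immediate since $\psi_n$ is an embedding and the induced map on $\pi_1$ is the isomorphism $\rho_n\circ\rho_\infty^{-1}$.

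One step is compressed and deserves to be spelled out, since it is where strong convergence is genuinely used. Knowing only that $(\psi_n)_*(s)\cdot O$ and $(\rho_n\circ\rho_\infty^{-1})(s)\cdot O$ converge to the same point does not by itself identify the two group elements: one first needs that both sequences converge to $s$ in $\PSL$ (for $(\rho_n\circ\rho_\infty^{-1})(s)$ this is algebraic convergence; for $(\psi_n)_*(s)$ use properness of $g\mapsto g\cdot O$ and Chabauty convergence, or more directly the $C^\infty$-convergence $\tilde\psi_n\to\mathrm{id}$ together with the equivariance $\tilde\psi_n(s\cdot x)=(\psi_n)_*(s)\cdot\tilde\psi_n(x)$ on an open set), and one then needs that for all large $n$ the group $G_n$ contains no nontrivial element in a fixed neighborhood of the identity. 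The latter follows from discreteness of the \emph{geometric limit} (if $1\neq g_n\to 1$, suitable powers of $g_n$ accumulate on a one-parameter subgroup, contradicting discreteness of $G_g=G_\infty$), not merely from discreteness of each $G_n$. Your appeal to the bijective approximation of $G_n\cap C$ by $G_\infty\cap C$ on compact sets is precisely this fact, so the proof goes through as written.
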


  We remark that the hypotheses in  \cite{CM} and  \cite{and-can1} that all groups be purely hyperbolic, or indeed that the convergence be strictly type preserving,  are not needed for this lemma.
Also note that as remarked above,  in general  $\psi_n ( \K)$ may be much smaller than the convex core of $M_n$.
 If the convergence is not strong,  Lemma~\ref{cores}  may fail even when the limit group is geometrically finite, as is shown by the well known Anderson-Canary examples~\cite{and-can}.
 
Lemma~\ref{cores} shows that, in the situation of strong convergence, we may take the homotopy equivalences $\phi_n$ to  be homeomorphisms between Scott cores of the relevant groups. 
 It also allows us to identify the ends of $M_{\infty}$ with the ends of the approximating groups. We have: 
 \begin{cor}  \label{cores1}  Let $\Gamma$ be a geometrically finite group and let $\rho_n $ be a  sequence of discrete faithful representations of $\Gamma$ converging strongly to $\rho_{\infty}$. Then, up to replacing $\G$ by the group $G_{n_0}$ for some $n_0 \in \mathbb N$, we can pick a Scott core $K$ of $\HHH^3/\G$ such that 
    there are  bi-Lipschitz
embeddings $\phi_{n} \co K  \to \HHH^3/G_n$ which induce $\rho_n$, and   such that $\phi_n(K )$ is a Scott core of 
$ M_n$ for $n  \in \mathbb N \cup\infty$.  Moreover 
suppose that  $E$ is an end of $M_{\infty}$ and  $U$ is the component of $M_{\infty} \setminus  \phi_\infty(K)$ which is a neighborhood of $E$. 
 Let $F = \dd U$ and let $U_n  $ be the component of $M_n \setminus   \phi_n (K )$ bounded by  $\phi_n \phi_\infty^{-1} (F)$. Then  $U_n$ is a neighborhood of an end $E_n$ of $M_n$ and we say that $U_n$ \emph{corresponds} to $E$.   
  \end{cor}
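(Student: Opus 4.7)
The plan is to deduce the corollary directly from Lemma~\ref{cores} combined with the end--core correspondence of Lemma~\ref{cutoffend}, using the freedom granted by the phrase ``up to replacing $\G$ by $G_{n_0}$''. First I would apply Lemma~\ref{cores} to obtain a compact core $\mathcal{K}$ of $M_{\infty}$ together with bi-Lipschitz embeddings $\psi_n \co \mathcal{K} \to M_n$, defined for all $n \geq n_0$ with $n_0$ sufficiently large, satisfying $(\psi_n)_* = \rho_n \circ \rho_{\infty}^{-1}$, and with the property that each $\psi_n(\mathcal{K})$ is a compact core of $M_n$.

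I would then re-index by declaring the new base group to be $G_{n_0}$ (so $\G := G_{n_0}$), set $K := \psi_{n_0}(\mathcal{K}) \subset \HHH^3/\G$ to be the Scott core of $\HHH^3/\G$, and define
\[
\phi_n := \psi_n \circ \psi_{n_0}^{-1} \co K \longrightarrow M_n, \qquad n \in \{n_0, n_0+1, \ldots\} \cup \{\infty\}.
\]
As a composition of bi-Lipschitz embeddings, $\phi_n$ is bi-Lipschitz, and on fundamental groups it realizes
\[
(\rho_n \circ \rho_{\infty}^{-1}) \circ (\rho_{n_0} \circ \rho_{\infty}^{-1})^{-1} = \rho_n \circ \rho_{n_0}^{-1},
\]
which, after the relabeling, is precisely $\rho_n$. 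Since $\phi_n(K) = \psi_n(\mathcal{K})$ is a compact core of $M_n$ and $\phi_n$ induces an isomorphism on $\pi_1$, it is a Scott core of $M_n$.

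For the correspondence between ends, I would invoke Lemma~\ref{cutoffend} twice. Applied to $M_{\infty}$ with Scott core $\phi_{\infty}(K)$, it tells us that each end $E$ of $M_{\infty}$ has a unique neighborhood component $U$ in $M_{\infty} \setminus \phi_{\infty}(K)$, and $F = \dd U$ is one specific boundary component of $\phi_{\infty}(K)$. The homeomorphism $\phi_n \circ \phi_{\infty}^{-1} \co \phi_{\infty}(K) \to \phi_n(K)$ sends $F$ to a boundary component of the Scott core $\phi_n(K) \subset M_n$; a second application of Lemma~\ref{cutoffend} then exhibits a unique complementary component $U_n$ of $M_n \setminus \phi_n(K)$ bounded by this boundary component, and shows that $U_n$ is a neighborhood of a single end $E_n$ of $M_n$.

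There is no substantial obstacle here: once Lemma~\ref{cores} is in hand, the corollary is a topological repackaging of it together with the Scott-core--end bijection. The only bookkeeping subtlety is the re-indexing step, which is forced by the fact that Lemma~\ref{cores} produces the bi-Lipschitz embeddings only for $n \geq n_0$; absorbing the initial tail of the sequence by redefining $\G := G_{n_0}$ is what allows $\phi_n$ to be defined uniformly on a common Scott core $K$ of $\HHH^3/\G$ for every index in the re-indexed sequence, including $n = \infty$.
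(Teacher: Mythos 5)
Your proposal is correct and follows essentially the same route as the paper: apply Lemma~\ref{cores}, relabel the base group as $G_{n_0}$, take $K = \psi_{n_0}(\mathcal K)$ with $\phi_n = \psi_n\circ\psi_{n_0}^{-1}$, and deduce the end correspondence from Lemma~\ref{cutoffend}. The only cosmetic difference is that you spell out the $\pi_1$ bookkeeping for $\rho_n\circ\rho_{n_0}^{-1}$ a little more explicitly than the paper does.
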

  \begin{proof} 
  
Set $\K =   \phi_\infty(K)$. Take $n_0$ large enough that the conclusion of Lemma~\ref{cores} applies.
Replacing  $N = \HHH^3/\G$ by $M_{n_0}$, $\G$ by $G_{n_0}$, and $\rho_n$ by $\rho'_n= \rho_{n} \rho_{n_0}^{-1}$, we have a sequence of representations as before.
The core of $M_{n_0}$ can be taken to be $\psi_{n_0} ( \K) $.
 Noting  that $\rho'_n$ is induced by $\psi_{n}\psi_{n_0}^{-1} \co \psi_{n_0} ( \K) \to M_{n}$, we can replace the homotopy equivalences $\phi_n \co K_N \to M_n$ by the  homeomorphisms $\psi_{n}\psi_{n_0}^{-1} \co \psi_{n_0} ( \K) \to \psi_{n} ( \K)$ between the cores of $M_{n_0}$ and $ M_n$, $n > n_0$. These converge to the homeomorphism $\psi_{n_0}^{-1} \co\psi_{n_0} ( \K) \to \K$. In other words, we may as well  assume that the homotopy equivalences  $\phi_n$ are actually embeddings of the core $K_N$ of $N$ into $M_n, M_{\infty}$.

The idea of the final statement follows~\cite{CM} \S 8.  That  $U_n$ is a neighborhood of an end $E_n$ of $M_n$ follows from Lemma~\ref{cutoffend}. 
  \end{proof}

  \subsection{Relative Hyperbolicity and Electric Geometry}\label{sec:relhyp}

We summarize the facts we need on relative hyperbolicity  and electric geometry.
For further details, we refer the reader to
\cite{farb-relhyp, bowditch-relhyp}
see also \cite{mahan-ibdd} Section 3. 
 
Let $(X, d)$ be a $\delta$-hyperbolic metric
space, and let $\mathcal{H}$ be a collection of pairwise disjoint   subsets. 
To \emph{electrocute} $\HH$ means to construct an auxiliary metric space
$(X_{el}, d_{el})$ in which the sets in $\HH$ effectively have zero diameter, although for technical reasons it is preferable they have diameter $1$ (or $2$). Precisely, 
let $X_{el}  = X \bigcup_{H \in \mathcal{H}} (H \times [0,1])$
with $H \times \{ 0 \}$ identified to $H \subset X$. We define the electric (pseudo)-metric $d_{el}$ on $X_{el}$ as follows.
First equip $H \times  [0,1]$ with the product metric and then modify this to a pseudo-metric by quotienting so that
$H \times \{ 1 \}$ is equipped with
 the zero metric. The metric on $H \times  [0,1]$ is the path metric induced by horizontal and vertical paths.  This means that in the space $(X_{el}, d_{el})$, any two points in $H$ are at distance at most $2$. 

 \begin{definition} \cite{farb-relhyp, bowditch-relhyp}
Let $X$ be a  metric space and $\mathcal{H}$ be a collection of
mutually disjoint subsets. If $X_{el}$ is also a hyperbolic metric space, then  
 $X$ is said to be
\emph{weakly hyperbolic} relative to the collection $\mathcal{H}$.
\end{definition}

 The collection $\mathcal{H}$  is said to be
\emph{uniformly separated} if there exists $C>0$ such that  $d(H_i, H_j) \geq C$ for all
$H_i \neq H_j \in \mathcal{H}$.  
It   is \emph{uniformly quasi-convex} if there exists $C>0$ such that  if for any $H \in \HH$ and for any points $x,x' \in H$, any geodesic joining them lies within the $C$-neighborhood of $H$. 
It 
is   \emph{mutually  cobounded} if there exists $C>0$ such that 
 for all $H_i \neq H_j  \in \mathcal{H}$, $\pi_i
(H_j)$ has diameter less than $C$, where $\pi_i$ denotes a nearest
point projection of $X$ onto $H_i$.

\begin{lemma}[\cite{bowditch-relhyp}, \cite{farb-relhyp} Proposition 4.6] 
\label{lemma:weakhyp}
Let $X$ be a hyperbolic metric space and
$\mathcal{H}$ a collection of  
 uniformly quasi-convex mutually cobounded
uniformly separated subsets.  
Then $X$ is weakly hyperbolic relative to the collection $\mathcal{H}$. 
\end{lemma}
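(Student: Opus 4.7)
The plan is to verify Gromov's thin-triangle condition for $(X_{el}, d_{el})$ directly, using the hyperbolic geometry of $X$ together with the structural hypotheses on $\HH$. First I would introduce, for every pair $x, y \in X$, a canonical \emph{preferred path} from $x$ to $y$ in $X_{el}$: start from the $X$-geodesic $[x,y]$ and replace each maximal sub-segment that lies in a fixed $C$-neighborhood of some $H \in \HH$ by a shortcut of length at most $2$ through the cone $H \times [0,1]$. Using uniform quasi-convexity of the $H \in \HH$, uniform separation, and the stability of $\delta$-hyperbolic geodesics near quasi-convex sets, I would show these preferred paths are uniform $(K, \epsilon)$-quasi-geodesics in $X_{el}$, with constants depending only on $\delta$ and the structural constants of $\HH$.

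The second step is a tracking statement: every electric geodesic $\gamma_{el}$ from $x$ to $y$ lies in a uniformly bounded electric neighborhood of the preferred path from $x$ to $y$, and conversely. One direction is immediate from quasi-geodesicity of the preferred paths. The other direction is more delicate: if $\gamma_{el}$ enters and exits the cone over some $H \in \HH$, mutual coboundedness forces its entry and exit points to project to bounded subsets of any other $H' \in \HH$ used along the preferred path; combined with uniform separation, this prevents $\gamma_{el}$ from taking a substantially different route through the family $\HH$.

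Finally, given any electric geodesic triangle with vertices $x, y, z$ in $X_{el}$, I would replace each side by its preferred-path representative. The underlying $X$-paths come from the $\delta$-thin triangle $[x,y] \cup [y,z] \cup [z,x] \subset X$, and since $d_{el} \leq d$, thinness in $X$ transfers to thinness in $X_{el}$ up to a bounded error that is absorbed by the cone construction and mutual coboundedness (which bounds how two sides meeting inside the same cone can spread out upon exiting). Applying the tracking step then upgrades this to a uniform thin-triangle constant for the original electric triangle, showing $(X_{el}, d_{el})$ is hyperbolic.

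The main obstacle is the tracking step: controlling how far an arbitrary electric geodesic can diverge from the canonical preferred path. Uniform separation prevents spurious shortcuts jumping between different $H \in \HH$; mutual coboundedness is what actually rules out geometrically very different but electrically efficient detours through the $\HH$-family. Without either hypothesis the electric metric can fail to be hyperbolic even when $X$ is, which is why both appear in the statement.
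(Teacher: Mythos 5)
First, note that the paper does not prove this lemma at all: it is imported verbatim from the cited references (Farb proves it via a coarse linear isoperimetric inequality for the electric space; Bowditch's treatment runs through a ``guessing geodesics'' criterion). So there is no in-paper argument to match; your proposal has to stand on its own.

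The overall architecture you propose -- canonical preferred paths obtained by shortcutting $[x,y]$ through the cones, followed by a slim-triangle argument -- is the right one, and your third step contains the essential correct idea: a point of $L(x,y)$ lying in $X$ is $\delta$-close in $X$ to $[x,z]\cup[z,y]$, and even if the nearby point was shortcutted out of $L(x,z)$ it lies in the $C$-neighborhood of the relevant $H$ and is therefore within electric distance about $C+2$ of $L(x,z)$ via the cone over $H$. However, steps 1 and 2 as written contain a genuine circularity. To show the preferred path is a uniform quasi-geodesic in $(X_{el},d_{el})$ you need a \emph{lower} bound on $d_{el}(x,y)$, i.e.\ control over an arbitrary competing electric path, which may be wildly far from any quasi-geodesic of $X$; the ``stability of $\delta$-hyperbolic geodesics near quasi-convex sets'' is a statement about the geometry of $X$ and says nothing about such paths. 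The standard repair is to build a coarsely $d_{el}$-Lipschitz retraction of $X_{el}$ onto $L(x,y)$ (using that the nearest-point projection of each $H$ onto $[x,y]$ is, up to bounded error, the interval that your construction collapses to a cone jump). Worse, your tracking step invokes the Morse property (``one direction is immediate from quasi-geodesicity of the preferred paths'') inside $X_{el}$ -- but quasi-geodesic stability is available only in a space already known to be hyperbolic, which is precisely the conclusion you are trying to establish. The clean way out is to drop steps 1--2 entirely and apply Bowditch's guessing-geodesics criterion to the family $\{L(x,y)\}$: your step 3 is exactly the slim-triangle verification that criterion requires, and the criterion then delivers both hyperbolicity of $X_{el}$ and the fact that the $L(x,y)$ fellow-travel electric geodesics, rather than assuming the latter. (As a side remark, your verification does not actually use coboundedness or separation; those hypotheses are needed for the stronger penetration properties discussed in Remark~\ref{stronglyrelhyp}, not for weak relative hyperbolicity itself.)
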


A typical example is when $X$ is hyperbolic space $\HHH^3$ and $\HH$ is the collection of lifts to $\HHH^3$ of the thin parts of a hyperbolic $3$-manifold.

Following Farb, we need to understand some finer details of the relationship between geodesics in $X$ and in $X_{el}$.  
Recall that \emph{K-quasi-geodesic} in a metric space $Y$ is a $K$-quasi-isometric embedding of an interval into $Y$, that is, a map $f:[a,b] \to Y$ such that 
$$\dfrac{1}{K} |t_1-t_2| -K \leq d_Y(f(t_1), f(t_2)) \leq K |t_1-t_2| +K$$
for all $t_1,  t_2 \in [a,b]$.  A \emph {quasi-geodesic} is a path in $Y$ which is a $K$-quasi-geodesic  for some $K>0$. 
If $X, \HH$ gives rise to an electric space $(X_{el}, d_{el})$,  then an  \emph {electric (quasi)-geodesic} in $X$ is a path in $X$ which is a
(quasi)-geodesic for the electric metric $d_{el}$.

We say that a path \emph{does not backtrack} if it does not re-enter any $H\in\mathcal{H}$ after leaving it.
Suppose that  $\lambda  $ is
an electric  quasi-geodesic  in $(X_{el},d_{el})$ without backtracking
and  with endpoints $a, b \in X \setminus \HH$. Keeping the endpoints $a,b$ fixed, replace each maximal subsegment of $\lambda $ lying within some $H \in \mathcal{H}$
by a hyperbolic  $X$-geodesic with the same endpoints.  The resulting
connected  path  is called an {\em electro-ambient quasi-geodesic} in
$X$, see Figure~\ref{fig:electro-ambient}. 
The main result we need is:

\begin{figure}[hbt] 
\centering 
\includegraphics[height=3cm]{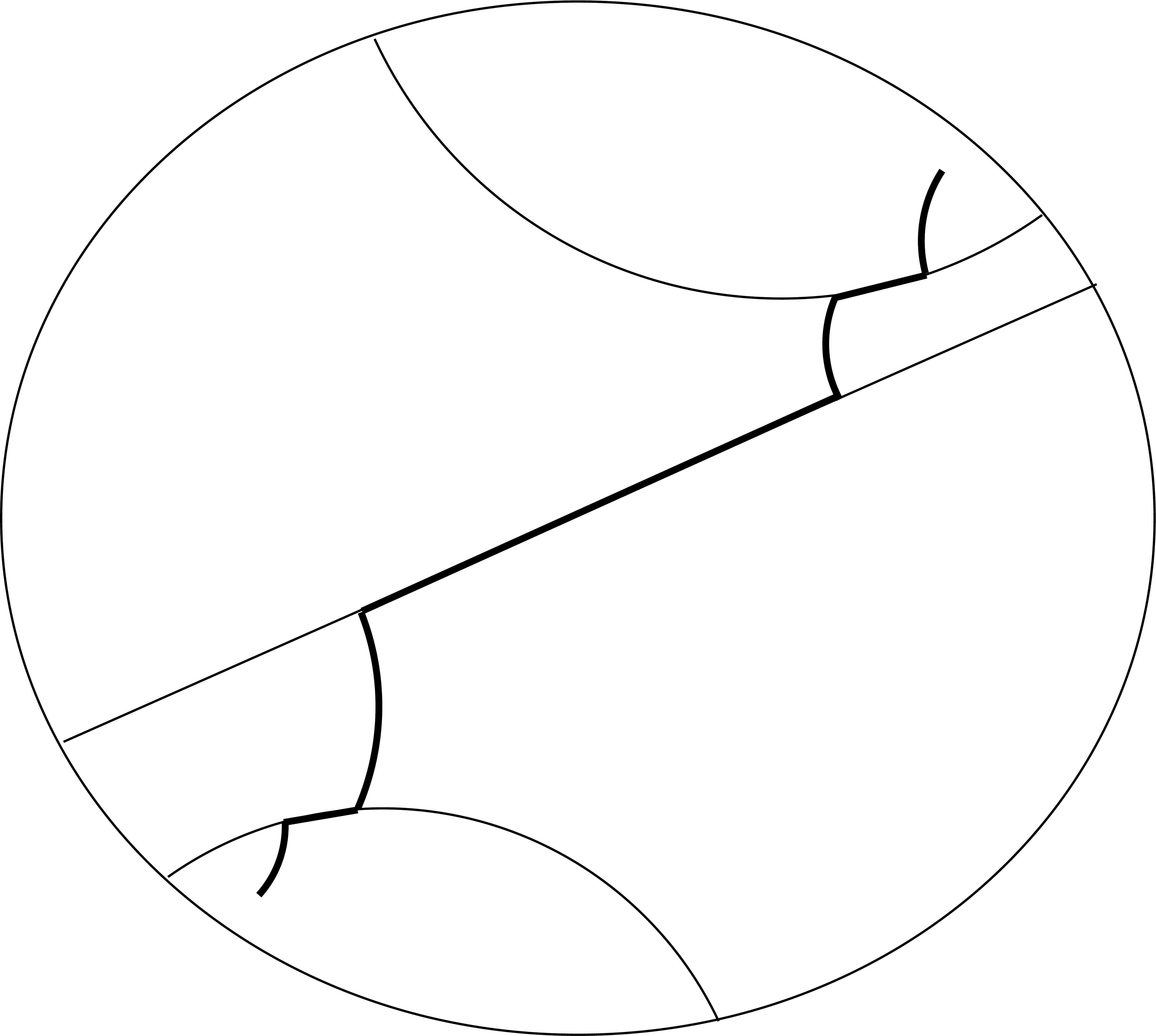}
\caption{An electro-ambient quasi-geodesic. }
\label{fig:electro-ambient}
\end{figure} 

 \begin{lemma} [\cite{mahan-ibdd} Lemma 3.7]\label{ea-genl}
Let  $X$ be  a hyperbolic metric space and  let $\HH$ be a collection of mutually cobounded uniformly separated uniformly quasi-convex sets. 
Let $\gamma$ be an electro-ambient quasi-geodesic with endpoints $a,b \in X \setminus \HH$. Then $\gamma$   is a quasi-geodesic in $X$ and   lies within bounded distance of any $X$-geodesic with the same endpoints. \end{lemma}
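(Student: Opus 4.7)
The plan is to compare $\gamma$ with an $X$-geodesic $\beta$ joining $a$ to $b$, using the hyperbolicity of the electric space $X_{el}$ supplied by Lemma~\ref{lemma:weakhyp}. The first step is to verify that both $\gamma$ and $\beta$ are uniform quasi-geodesics in $X_{el}$. For $\gamma$: by construction it is obtained from an electric quasi-geodesic $\lambda$ by replacing each maximal subsegment lying inside some $H\in\HH$ by an $X$-geodesic with the same endpoints; since $H$ has $d_{el}$-diameter at most $2$, each replacement changes the electric length by at most a uniform additive constant, so $\gamma$ remains electrically quasi-geodesic. For $\beta$: any maximal subarc of $\beta$ passing close to some $H$ can be shortcut at electric cost at most $2$, while outside the $H$'s the two metrics coarsely agree, so $\beta$ is also an electric quasi-geodesic. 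Stability of quasi-geodesics in the hyperbolic space $X_{el}$ then gives a uniform bound on the $d_{el}$-Hausdorff distance between $\gamma$ and $\beta$.

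The main step is to upgrade this $d_{el}$-closeness to genuine $d$-closeness. Outside a fixed neighborhood of $\bigcup\HH$ the two metrics coarsely agree, so the only issue arises when $\gamma$ is traversing some $H\in\HH$ along one of its replacement $X$-geodesic segments. Here I would use mutual coboundedness: the nearest-point projection onto $H$ of any other $H'\in\HH$, or of any point of $X$ at distance more than the separation constant from $H$, has uniformly bounded diameter. Combined with uniform separation and the fact that $\gamma$ traverses each $H$ along a single $X$-geodesic (so cannot backtrack within $H$), this forces $\beta$ to enter and exit the same $H$ at points uniformly close in $d$ to the entry and exit points of $\gamma$. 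Uniform quasi-convexity of $H$ then confines the two traversing arcs to a bounded $d$-neighborhood of each other, so $\gamma$ lies within uniform $d$-distance of $\beta$ throughout.

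Finally, having $\gamma$ in a uniform $d$-neighborhood of the $X$-geodesic $\beta$, the quasi-geodesic property of $\gamma$ in $X$ follows by standard arguments: the replacement geodesic segments $[p_i,q_i]$ contribute length exactly $d(p_i,q_i)$, and the subarcs of $\lambda$ lying off $\bigcup\HH$ contribute lengths controlled by the electric quasi-geodesic constants together with the coarse agreement of the two metrics. I expect the main obstacle to be the second step above, which amounts to a Farb-type bounded coset penetration argument: one must rule out the scenario in which $\gamma$ plunges deep into some $H$ along its replacement geodesic while $\beta$ only grazes a neighborhood of $H$. Mutual coboundedness and uniform separation are precisely the hypotheses needed to carry out this matching with uniform constants.
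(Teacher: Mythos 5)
The paper does not prove this lemma: it is imported verbatim from \cite{mahan-ibdd} (Lemma 3.7), with the remark that the special case where $X=\HHH^3$ and $\HH$ consists of lifts of Margulis tubes was done from first principles in \cite{mahan-series1}, Lemmas 7.15--7.16. Your outline follows the same route as that source, namely Farb's relative-hyperbolicity machinery: hyperbolicity of $X_{el}$ (Lemma~\ref{lemma:weakhyp}) plus a bounded-coset-penetration argument to match the entry and exit points of $\gamma$ and of the $X$-geodesic $\beta$ on each penetrated $H$, then quasi-convexity of $H$ and hyperbolicity of $X$ to compare the two traversing arcs, and finally a length count to recover the quasi-geodesic property. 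You have correctly identified the crux (ruling out $\gamma$ plunging deep into an $H$ that $\beta$ only grazes), so I record only two places where your sketch is thinner than the argument it gestures at. First, the $X$-geodesic $\beta$ is \emph{not} a parametrized quasi-geodesic in $(X_{el},d_{el})$: it may spend arbitrarily long $d$-length crossing a set of $d_{el}$-diameter $2$. It must be treated as an unparametrized (or reparametrized) electric quasi-geodesic; this is itself a lemma of Farb (his Lemma 4.5/Proposition 4.6), not an observation, and the same caveat applies to your step~1 for $\gamma$, where you also need uniform quasi-convexity to keep each replacement $X$-geodesic in a $d_{el}$-bounded neighborhood of its $H$ (closeness of its endpoints alone does not control the whole segment). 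Second, Hausdorff proximity of $\gamma$ to $\beta$ does not by itself make $\gamma$ a quasi-geodesic; you need the progress estimate you allude to, i.e.\ a bound on $\sum_i d(p_i,q_i)$ obtained from the fact that $\beta$ passes near the matched entry/exit points in order, together with a bound on the number of sets penetrated coming from the electric quasi-geodesic constants of $\lambda$. Neither point is an obstruction, and both are handled in the cited reference.
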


 In the case  in which $X$ is hyperbolic space $\HHH^3$ and $\HH$ is the collection of lifts to $\HHH^3$ of the thin parts of a hyperbolic $3$-manifold,  the proof of Lemma~\ref{ea-genl} is
straightforward and was done from first principles in  \cite{mahan-series1} Lemmas 7.15, 7.16.

\begin{rmk}\label{stronglyrelhyp} {\rm One can introduce a further property
 of a metric space $X$  being \emph{strongly hyperbolic} relative to a collection $\HH$, see \cite{mahan-ibdd} Section 3 
 and also \cite{bowditch-relhyp}. This condition concerns how paths penetrate the sets in $\HH$. If $X$ is itself a $\delta$-hyperbolic space, then the  conditions   that the sets in $\HH$ be mutually cobounded, uniformly separated and uniformly quasi-convex  are enough to imply that $X$ is strongly hyperbolic  relative to  $\HH$, see  \cite{farb-relhyp} \S 4 and  \cite{mahan-ibdd} \S 3. Since all that we needed here is the result of Lemma~\ref{ea-genl}, we do not digress to give the precise definition here.} 
\end{rmk}

\section{Cannon-Thurston Maps and Convergence Criteria}\label{sec:CTMaps}

Let $\G$ be a Kleinian group and let $\rho: \G \to \PSL$ with $\rho(\G) = G$.
Let $\Lambda_{\G}, \Lambda_G$ be the corresponding limit sets.
A \emph{Cannon-Thurston} or $CT$-map   is an equivariant continuous map  $\hat i: \Lambda_{\G} \to  \Lambda_G$, that is, a map such that 
$$ \hat i    (g \cdot \xi)  = \rho(g) \hat i (\xi) \ \ {\rm for \ all} \ \ g \in \G, \xi \in \Lambda_{\G}.$$

Recall from Section~\ref{sec:cayley} the natural embedding $j_{\G} $ of the Cayley graph  of $\mathcal G \G$ into $\HHH^3$.  
The $CT$-map $\hat i = \hat i(\rho)$ can also  be defined as the continuous extension to $\Lambda_{\G} \subset \dd \HHH^3$ of the obvious map $i\co j_{\G}(\mathcal G \G) \to \HHH^3$ defined by $ i(j_{\G}(g)) = \rho(g) \cdot O$.

Suppose alternatively that $N$ is a geometrically finite manifold   homotopy equivalent to another hyperbolic manifold $M$ and let $\phi\co K_N \to M$ be a homotopy equivalence between a Scott core $K_N$ of $N$ and $M$.
It is easy to see that   $\hat i$ is an extension  to 
$\dd \HHH^3$ 
of any  lifting $\widetilde \phi\co \widetilde K_N \to \widetilde M$, since any fixed orbit of the action  of $G$ on $\tilde \phi  ( \tilde K_N )$ can serve as a substitute for the orbit of the  basepoint $O$. Notice however that the careful discussion in~\ref{sec:alggeolts} is needed to fix basepoints if we are dealing with a sequences of groups.

In~\cite{mahan-series1} we reproved Floyd's results~\cite{floyd} on the existence of  $CT$-maps for geometrically finite groups:
\begin{theorem}[\cite{mahan-series1} Theorem 4.2]  \label{firstresult}Let $\G, G$ be finitely generated geometrically finite groups and let $\phi: \G \to G$ be weakly type preserving isomorphism. Then the \emph{CT}-map
$\hat i: \Lambda_{\G} \to \Lambda_G$ exists.   
\end{theorem}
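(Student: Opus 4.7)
The plan is to verify the standard continuous-extension criterion for CT-maps: for each $R > 0$ one must exhibit $R' > 0$ such that whenever the $\HHH^3$-geodesic $[g_1 \cdot O, g_2 \cdot O]$ lies outside $B_\HHH(O;R')$, the geodesic $[\phi(g_1) \cdot O, \phi(g_2) \cdot O]$ lies outside $B_\HHH(O;R)$, with $R \to \infty$ as $R' \to \infty$. Granted this, any sequence $g_n \cdot O \to \xi \in \Lambda_\G$ will force $(\phi(g_n) \cdot O)$ to be Cauchy in the Euclidean metric $d_\mathbb{E}$ on $\mathbb B \cup \Chat$, with limit depending only on $\xi$, yielding the continuous extension $\hat i$. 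Equivariance then follows from the equivariance of $i$ on $j_\G(\mathcal G \G)$.

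First I would set up the equivariant thick/thin decomposition afforded by geometric finiteness. By the Margulis lemma there are $\G$- and $G$-equivariant families $\HH_\G$ and $\HH_G$ of disjoint open horoballs at the parabolic fixed points in $\HHH^3$, each uniformly separated and uniformly quasi-convex in $\HHH^3$, whose quotients are neighborhoods of the cusps of $M_\G = \HHH^3/\G$ and $M_G = \HHH^3/G$. Since $\phi$ is weakly type preserving, it sends each maximal parabolic subgroup of $\G$ into a parabolic subgroup of $G$ with a unique fixed point, yielding a $\phi$-equivariant bijection $\HH_\G \leftrightarrow \HH_G$ that matches the horoball $H$ stabilized by a parabolic $P < \G$ with a horoball $\phi_* H$ stabilized by $\phi(P) < G$.

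Next I would exploit relative hyperbolicity. Electrocute $\HH_\G$ in one copy of $\HHH^3$ and $\HH_G$ in a second copy as in Section~\ref{sec:relhyp}; by Lemma~\ref{lemma:weakhyp} the resulting electric spaces $\HHH^3_{el,\G}$ and $\HHH^3_{el,G}$ are both Gromov-hyperbolic. By geometric finiteness the thick parts of the convex cores are compact $3$-manifolds with boundary, and a homotopy equivalence between them realizing $\phi$ can be lifted to an equivariant bi-Lipschitz map between the complements of the horoballs in $\HHH^3$, which combined with the horoball matching produces an equivariant quasi-isometry $\Phi \co \HHH^3_{el,\G} \to \HHH^3_{el,G}$. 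As a quasi-isometry between Gromov-hyperbolic spaces, $\Phi$ extends to a homeomorphism of Gromov boundaries.

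The main obstacle is to translate this \emph{electric} boundary extension into an honest continuous map $\Lambda_\G \to \Lambda_G \subset \Chat$, because a hyperbolic geodesic can make arbitrarily deep excursions into horoballs and so behave very differently from an electric geodesic. This is exactly where Lemma~\ref{ea-genl} enters: replace each hyperbolic geodesic $[g_1 \cdot O, g_2 \cdot O]$ by the corresponding electro-ambient quasi-geodesic in $\HHH^3_{el,\G}$, which by the lemma remains a quasi-geodesic in $\HHH^3$ at bounded Hausdorff distance from the hyperbolic geodesic. The quasi-isometry $\Phi$ sends electro-ambient quasi-geodesics on the $\G$-side to electro-ambient quasi-geodesics on the $G$-side, passing through matched horoballs; a horoball-penetration estimate then converts the electric escape coming from $\Phi$ into the hyperbolic escape required by the criterion, at least when $\xi$ is a conical limit point. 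Finally, for a parabolic fixed point $\xi$ of $\G$, weak type preservation forces $\phi(\G_\xi)$ to be parabolic with a unique fixed point $\phi_*\xi \in \Chat$; setting $\hat i(\xi) := \phi_*\xi$, continuity there reduces to a direct estimate inside a matched pair of horoballs, which is routine once the conical case is settled.
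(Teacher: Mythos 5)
Your overall strategy --- verify the extension criterion of Theorem~\ref{crit1}, pass to the electric spaces obtained by electrocuting equivariant horoball families, and return from electric to genuine hyperbolic geodesics via the electro-ambient Lemma~\ref{ea-genl} --- is the route taken in \cite{mahan-series1} (the present paper only quotes the result; the proof, including the horoball-penetration estimates you allude to, is carried out there, cf.\ \S 4 and Lemmas 7.15--7.16 of that paper). So the architecture is the intended one.

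There is, however, one step that fails as stated, and it fails precisely where the hypothesis ``weakly type preserving'' differs from ``strictly type preserving''. You claim that weak type preservation yields a $\phi$-equivariant \emph{bijection} $\HH_\G \leftrightarrow \HH_G$. It only yields an injection on conjugacy classes of maximal parabolic subgroups: a loxodromic of $\G$ is allowed to map to an accidental parabolic of $G$, so $G$ may carry cusps, and hence horoballs, with no counterpart in $\HH_\G$. (Take $\G$ convex cocompact quasi-Fuchsian and $G$ a geometrically finite cusp group obtained by pinching a simple closed curve: then $\HH_\G=\emptyset$ while $\HH_G\neq\emptyset$.) With unmatched horoballs on the $G$-side, your equivariant quasi-isometry $\Phi\co\HHH^3_{el,\G}\to\HHH^3_{el,G}$ does not exist, and the homeomorphism of electric boundaries on which your conical-point argument rests is lost --- indeed the CT map is genuinely non-injective at the two fixed points of such a loxodromic, so no boundary homeomorphism can induce it. The repair is to electrocute the extra horoballs on the $G$-side as well, accept that the induced map of electric spaces is then only coarsely Lipschitz in the relevant direction, and control the deep excursions of the image geodesic into the new horoballs directly, using convexity of horoballs together with the fact that their entry and exit points are orbit points already known to be far from the basepoint; this is exactly the content of the horoball lemmas in \cite{mahan-series1}. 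Two smaller imprecisions: the lift of a homotopy equivalence of compact (truncated convex) cores is an equivariant quasi-isometry, not a bi-Lipschitz map; and it identifies the truncated convex hulls, not the complements of the horoballs in all of $\HHH^3$, so the electric spaces should be built on the convex hulls (or on the orbit, equivalently the Cayley graph as in Theorem~\ref{crit1}) rather than on $\HHH^3$ itself.
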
 
  
Note that the examples in~\cite{and-can} show that there exist   geometrically finite non-cusped manifolds which are homotopic but not homeomorphic, see also Lemma~\ref{cores}. We deduce from the above that the $CT$-map between their limit sets nonetheless exists and is a homeomorphism.

 \subsection{Criteria for convergence}

We now collect some criteria for the existence and convergence of $CT$-maps from~\cite{mahan-series1}.

\begin{theorem} [\cite{mahan-series1} Theorem 4.1] \label{crit1} Let $\rho \co \G \to G$ be a weakly type preserving  isomorphism of finitely generated Kleinian groups and suppose that $\G$ is geometrically finite. The $CT$-map  $  \Lambda_{\G} \to \Lambda_G$
 exists if and only if    there exists a function  $f \co \mathbb N \to \mathbb N$, such that 
 $f(N)\rightarrow\infty$ as $N\rightarrow\infty$, and such that whenever  $\lambda$ is a $d_{\G}$-geodesic segment  lying outside $B_{\G}(1; N)$ in $\Gr \G$,  the  $\HHH^3$-geodesic joining
the endpoints of $i(j_{\G}(\lambda))$ lies outside $B_{\HHH}(O; f(N))$ in $\HHH^3$.
\end{theorem}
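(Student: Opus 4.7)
The plan is to prove both implications using an elementary visibility lemma in $\HHH^3$: for sequences $P_n, Q_n \in \HHH^3$ leaving every compact subset of $\HHH^3$, one has $d_\HHH(O,[P_n,Q_n]) \to \infty$ if and only if $d_\mathbb E(P_n,Q_n) \to 0$ in the ball model $\mathbb B \cup \Chat$. I would record this first using the standard perpendicularity description of hyperbolic geodesics in the ball model: a geodesic segment at hyperbolic distance $\geq R$ from $O$ lies on a Euclidean circle perpendicular to $\dd\mathbb B$ whose Euclidean radius is $O(e^{-R})$.

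For the ``if'' direction, assume a function $f$ as in the criterion. To produce $\hat i$ it suffices, by compactness of $\Chat$, to verify that whenever $g_n,h_n \in \G$ satisfy $j_\G(g_n), j_\G(h_n) \to \xi \in \Lambda_\G$, one has $d_\mathbb E(\rho(g_n)O, \rho(h_n)O) \to 0$. The argument has three steps: (a) any $d_\G$-geodesic $\lambda_n$ from $g_n$ to $h_n$ must leave every ball $B_\G(1;N)$ --- this uses geometric finiteness of $\G$ to convert the hypothesis $g_n O, h_n O \to \xi$ into an assertion about electro-ambient quasi-geodesics in $\HHH^3$ via Lemma~\ref{ea-genl}, which then pulls back to $\Gr \G$; (b) the criterion now yields $d_\HHH(O,[\rho(g_n)O,\rho(h_n)O]) \geq f(N) \to \infty$; (c) the visibility lemma finishes the job. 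Equivariance of the resulting $\hat i$ is automatic because $\rho$ is a homomorphism, and continuity of $\hat i$ at points of $\Lambda_\G$ is obtained by a diagonal argument of the same flavor applied to sequences $\xi_n \to \xi$.

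For the ``only if'' direction, I would argue by contradiction: suppose $\hat i$ exists but the criterion fails, so there are $C>0$, $N_n \to \infty$, and $d_\G$-geodesics $\lambda_n$ with endpoints $a_n, b_n$ avoiding $B_\G(1;N_n)$ while $[\rho(a_n)O, \rho(b_n)O]$ meets $B_\HHH(O;C)$. Passing to subsequences and using properness of the $\G$-action on $\HHH^3$, one gets $a_n O \to \xi_a$ and $b_n O \to \xi_b$ in $\Lambda_\G$, and by existence and continuity of $\hat i$, $\rho(a_n)O \to \hat i(\xi_a), \rho(b_n)O \to \hat i(\xi_b)$. The contrapositive of the visibility lemma applied to the assumption on the target geodesic forces $\hat i(\xi_a) \neq \hat i(\xi_b)$, and hence $\xi_a \neq \xi_b$. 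But then the source segment $[a_n O, b_n O]$ converges to the biinfinite geodesic $[\xi_a, \xi_b]$, which has finite $\HHH^3$-distance from $O$; running the electric-metric comparison of step (a) in reverse bounds $d_\G(1, \lambda_n)$, contradicting $d_\G(1, \lambda_n) \geq N_n \to \infty$.

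The main technical obstacle is the possible presence of cusps in $\G$, since in that case $j_\G$ is only a quasi-isometric embedding after electrocuting horoballs, not as a plain map of metric spaces. The argument therefore rests on the relative-hyperbolicity setup of Section~\ref{sec:relhyp}, in particular on replacing $d_\G$-geodesics in $\Gr \G$ by their $\HHH^3$ electro-ambient counterparts via Lemma~\ref{ea-genl}, so as to certify in both directions that closeness to $1$ in $d_\G$ corresponds to closeness to $O$ in $d_\HHH$ outside the standard cusp regions. In the convex cocompact case both directions simplify considerably, since $j_\G$ is then a genuine quasi-isometric embedding and no electrocution is needed.
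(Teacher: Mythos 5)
The paper does not actually prove this statement: it is imported verbatim from \cite{mahan-series1} (Theorem 4.1 there), and your outline is the standard Mitra-style visibility argument on which that proof is based. In the convex cocompact case your sketch is essentially right: the visibility lemma, the fellow-travelling of $j_{\G}(\lambda)$ with the $\HHH^3$-geodesic joining its endpoints, and properness of the orbit map give both implications. Your ``only if'' direction also survives cusps, because a point of $[\rho(a_n)O,\rho(b_n)O]$ lying in $B_{\HHH}(O;C)$ that is matched to a point of the electro-ambient path inside a horoball is necessarily at bounded depth in that horoball (as $O$ lies outside it), hence within bounded distance of an entry point, which is a genuine point of $j_{\G}(\lambda_n)$; so $d_{\G}(1,\lambda_n)$ really is bounded, as you need.

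The genuine gap is step (a) of your ``if'' direction when $\G$ has parabolics. It is false that $g_nO,h_nO\to\xi$ forces the $d_{\G}$-geodesic $\lambda_n$ from $g_n$ to $h_n$ to leave every ball $B_{\G}(1;N)$: take $p\in\G$ parabolic and $g_n=p^{-n}$, $h_n=p^{n}$. Both orbit points converge to the parabolic fixed point $\xi_p$, yet $\lambda_n$ fellow-travels the peripheral coset $\langle p\rangle$, whose orbit $\{p^kO\}$ lies on the horosphere through $O$ based at $\xi_p$; the path around that horosphere passes right back through a bounded neighbourhood of $O$, so $d_{\G}(1,\lambda_n)$ stays bounded. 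No electro-ambient modification rescues this, because $j_{\G}(\lambda_n)$ genuinely returns near the basepoint; the criterion is simply silent about such pairs, since it only quantifies over geodesics that stay outside large balls. To close the gap one must invoke the weak type preservation hypothesis --- which your ``if'' direction never uses, and which is present in the statement precisely for this reason --- together with the standard decomposition of orbit points near a bounded parabolic point ($g=qg'$ with $q\in\Stab(\xi)$ and $g'O$ confined to a compact region) to show directly that $\rho(g)O$ and $\rho(h)O$ both accumulate on the fixed point of the image parabolic subgroup. This separate treatment of parabolic points is exactly the content of the horoball lemmas of \cite{mahan-series1} alluded to in the Remark following the theorem in the present paper.
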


In~\cite{mahan-split}, the criterion was used in an alternative form which involves geodesics in $\HHH^3$ for the domain group $\G$ also. Recall that a geometrically finite group is convex cocompact if its convex core is compact. In this case we can take the Scott core to be the convex core.

 \begin{theorem}\label{crit1'} Let $\rho \co \G \to G$ be a strictly type preserving  isomorphism of finitely generated Kleinian groups, and suppose that $\G$ is convex cocompact. 
Suppose that $K_N$ is the convex core of $N = \HHH^3/\G$ and that $\phi\co  K_N \to \HHH^3/G$ is a homotopy equivalence.
Then the $CT$-map  $  \Lambda_{\G} \to \Lambda_G$
 exists if and only if    there exists a non-negative function  $f \co \mathbb N \to \mathbb N$, such that 
 $f(M)\rightarrow\infty$ as $M\rightarrow\infty$, and such that whenever  $\lambda$ is a $\HHH^3$-geodesic segment  lying outside $B (O; M)$,  the  $\HHH^3$-geodesic   $[\tilde \phi ( \lambda)]$ lies outside $B(\tilde \phi(O); f(M))$, where $\tilde \phi$ is a fixed lift of $\phi$ to the obvious map from $\widetilde {K_N}$ (the convex hull of $\Lambda_{\G}$) to $\HHH^3$.
\end{theorem}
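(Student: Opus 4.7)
The plan is to deduce Theorem~\ref{crit1'} from Theorem~\ref{crit1} by showing that, under the convex cocompactness hypothesis on $\G$, the two criteria are equivalent. Since $\G$ is convex cocompact (hence has no parabolics), it acts cocompactly on the convex hull $\widetilde{K_N}$ of $\Lambda_{\G}$ in $\HHH^3$, and the Svarc-Milnor lemma applied to the orbit map $j_{\G}$ gives a $\G$-equivariant quasi-isometry between $(\Gr\G, d_{\G})$ and $(\widetilde{K_N}, d_{\HHH})$. In particular, there are constants $K \geq 1$, $C \geq 0$ such that
$$\tfrac{1}{K} |g| - C \leq d_{\HHH}(O, g\cdot O) \leq K|g| + C$$
for every $g \in \G$, allowing one to compare balls $B_{\G}(1;N)$ and $B_{\HHH}(O; M)$ up to bounded error.

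I would then pass between the two criteria as follows. A $d_{\G}$-geodesic $\lambda'$ in $\Gr\G$ outside $B_{\G}(1;N)$ maps under $j_{\G}$ to a $\HHH^3$-quasi-geodesic with endpoints outside $B_{\HHH}(O; N/K - C)$; by Morse stability of quasi-geodesics in the $\delta$-hyperbolic space $\HHH^3$, the $\HHH^3$-geodesic $\lambda$ joining those endpoints stays in a uniformly bounded neighborhood of $j_{\G}(\lambda')$, hence lies outside $B(O; M)$ for $M$ linearly comparable to $N$. Conversely, given a $\HHH^3$-geodesic $\lambda \subset \widetilde{K_N}$ outside $B(O; M)$, its endpoints can be coarsely approximated by orbit points $g_1\cdot O$, $g_2 \cdot O$, and the Cayley-graph geodesic $\lambda'$ from $g_1$ to $g_2$ maps under $j_{\G}$ to a quasi-geodesic within bounded distance of $\lambda$, so $\lambda'$ lies outside $B_{\G}(1; N)$ for $N$ comparable to $M$. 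On the target side, the equivariance $\tilde\phi(g\cdot O) = \rho(g)\tilde\phi(O)$ shows that on the orbit $\G\cdot O$ the map $\tilde\phi$ differs from $i \circ j_{\G}^{-1}$ by the uniform constant $d_{\HHH}(\tilde\phi(O), O)$. Hence the endpoints of $\tilde\phi(\lambda)$ and of $i(j_{\G}(\lambda'))$ are at uniformly bounded distance from each other, and Morse stability again implies that the two target $\HHH^3$-geodesics, namely $[\tilde\phi(\lambda)]$ and the geodesic through the endpoints of $i(j_{\G}(\lambda'))$, are within bounded Hausdorff distance. Combining with Theorem~\ref{crit1} yields the equivalence of the two criteria, with the function $f$ in one formulation giving rise to a possibly rescaled but still divergent function in the other.

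The only obstacle is the careful bookkeeping of additive and multiplicative constants needed to ensure that whenever one criterion holds with some $f$ tending to infinity, the translated criterion does so too with a (possibly rescaled) divergent function. The conceptual input is standard: the Svarc-Milnor lemma, the stability of quasi-geodesics in a $\delta$-hyperbolic space, and the equivariance of $\tilde\phi$. Convex cocompactness is precisely what makes $j_{\G}$ a genuine quasi-isometry onto a $\G$-cocompact convex set. Without it (for instance in the geometrically finite case with cusps treated in~\cite{mahan-series1}), the passage between criteria would require electrocuting the cusp neighborhoods and replacing $\HHH^3$-geodesics by the electro-ambient quasi-geodesics of Section~\ref{sec:relhyp}.
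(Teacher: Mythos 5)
Your argument is correct and is essentially the proof the paper gives: the paper deduces Theorem~\ref{crit1'} from Theorem~\ref{crit1} by exactly this comparison, citing that $d_{\G}$-geodesics are $\HHH^3$-quasi-geodesics and that $\HHH^3$-geodesics are tracked by Cayley-graph geodesics when $\G$ is convex cocompact (via \cite{mahan-series1} Corollary 3.8 and Lemma 3.6), together with stability of quasi-geodesics. Your additional remarks about equivariance of $\tilde\phi$ and the bookkeeping of constants fill in the same details the paper leaves implicit.
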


Note that to make a sensible statement here we need to insist that,
 in addition to being compact,  the 
core $K_N$ of $N$ should also be convex. If $\G$ is  Fuchsian group corresponding to a closed surface, the main case in~\cite{mahan-split}, then this is not an issue.

If $\G$ is convex cocompact, then to compare geodesics in $\Gr \G$ and $\HHH^3$, one can use that any $d_{\G}$-geodesic in $\Gr \G$ is a quasi-geodesic in $\HHH^3$, see for example~\cite{mahan-series1} Corollary 3.8. Moreover any  geodesic  in $\HHH^3$ can be tracked at bounded distance  by a path which is geodesic  in $\Gr \G$ (with bound depending only on $\G$), see~\cite{mahan-series1} Lemma 3.6.  This shows the equivalence of the criteria  in Theorems~\ref{crit1} and~\ref{crit1'}
 in this case. 

\begin{rmk}{\rm 
If $\G$ is not convex cocompact then the comparison between geodesics in $\Gr \G$ and $\HHH^3$  is not quite straightforward,  since a geodesic path in $\Gr \G$ can track round the boundary of a horosphere. 
This can be dealt with but requires more care. In Theorem~\ref{thm:strong=unif} of this paper, $\G$ is always convex cocompact, and in any case,    we shall mainly stick to the first version of the criterion.}
\end{rmk}

Now let $\G$ be a fixed geometrically finite Kleinian group and suppose that
 $\rho_n\co  \G \rightarrow \PSL$ is a sequence
of discrete faithful weakly type preserving representations converging  algebraically    to $\rho_\infty \co  \G \rightarrow \PSL $. Let $G_n = \rho_n (\G), n  \in \mathbb N \cup  {\infty}$ and write $\Lambda_n$ for $ \Lambda_{G_n}$. 
To normalize, we embed all the Cayley graphs with the same  base-point $O = O_{G_n}$
for all $n$ and set $j_{n}(g)= \rho_n(g) \cdot O, g \in \Gr \G$.
Define  $i_n \co j_{\G}(\Gr \G) \to \HHH^3 $    by $i_n(j_{\G}(g)) \mapsto \rho_{n}(g) \cdot O, g \in \Gr \G$, so that   the \emph{CT}-map  $\hat i_n: \Lambda_{\G} \to  \Lambda_{n}$ is the continuous extension of $i_n$ to $\Lambda_{\G}$.  Assuming they exist, we say that the \emph{CT}-maps   $\hat i_n: \Lambda_{\G} \to  \Lambda_{n}$ converge uniformly (resp. pointwise) to $\hat i_{\infty}$ if they do so as maps from 
$\Lambda_{\G}$ to $\Chat $. 
If $\lambda$ is any $d_{\G}$-geodesic segment in $\Gr \G$  with endpoints $\gamma, \gamma' \in  \G$, then we also write $[j(\lambda)]$ for  the $\HHH^3$-geodesic $[j(\gamma),j( \gamma')]$.

In~\cite{mahan-series1} we introduced two  properties  UEP (Uniform Embedding of Points)  and UEPP (Uniform Embedding of Pairs of Points) of the sequence $(\rho_n)$. The first was shown in~\cite{mahan-series1} to be equivalent to strong convergence, and the second is our criterion for uniform convergence of $CT$-maps. We summarise these definitions and results here.

\begin{definition}   Let $\rho_n\co \G \to G_n$ be  a sequence of weakly type preserving  isomorphisms of  Kleinian groups.   Then $(\rho_n)$ is said to satisfy
  UEP   if there
exists a non-negative function  $f \co \mathbb N \to \mathbb N$, with
 $f(N)\rightarrow\infty$ as $N\rightarrow\infty$, such that for all $g
\in \Gamma$, 
 $d_\Gamma (1,g) \geq N$ implies  $d_{\Hyp} (\rho_n(g)\cdot O , O) \geq
f(N)$ for all $n  \in \mathbb N$.  
\end{definition}

\begin{definition}\label{crit:unifcrit1} Let $\rho_n\co \G \to G_n$ be  a sequence of weakly type preserving  isomorphisms of  Kleinian groups.  
 Then $(\rho_n)$ satisfies UEPP  if there exists a  function  $f' \co \mathbb N \to \mathbb N$, such that 
 $f'(N)\rightarrow\infty$ as $N\rightarrow\infty$, and such that whenever  $\lambda$ is a $d_{\G}$-geodesic segment  lying outside $B_{\G}(1; N)$ in $\Gr \G$,  the  $\HHH^3$-geodesic   $[j_n( \lambda)]$ lies outside $B_{\HHH}(O; f'(N))$  for all $n \in \mathbb N$.
\end{definition}

\begin{prop}[\cite{mahan-series1} Proposition 5.3] \label{uep}
Suppose that a sequence of  discrete faithful weakly type preserving representations $(\rho_n \co  \G \rightarrow \PSL )$
converges algebraically to  $  \rho_\infty $.  Then $(\rho_n )$ converges strongly  if and only if   it satisfies UEP.
\end{prop}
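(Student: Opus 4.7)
The plan is to prove both directions of the biconditional separately, with the easier direction being UEP plus algebraic convergence implies strong, and the harder direction being strong implies UEP.

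For the easier direction, assume UEP and algebraic convergence. By passing to a subsequence, I may assume $G_n$ converges in the Chabauty topology to some geometric limit $G_g$, and it suffices to show $G_g = G_\infty$. Algebraic convergence immediately gives $G_\infty \subset G_g$: if $\rho_\infty(g) \in G_\infty$ then $\rho_n(g) \to \rho_\infty(g)$ and all $\rho_n(g)$ lie in $G_n$. For the reverse inclusion, let $h \in G_g$, so that there exist $h_n = \rho_n(g_n) \in G_n$ with $h_n \to h$ in $\PSL$. Then $d_\Hyp(h_n \cdot O, O)$ is bounded, so the contrapositive of UEP forces $d_\G(1, g_n)$ to stay bounded. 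Since balls in $\G$ are finite, a subsequence of $(g_n)$ is constant, say $g_n = g$, and then algebraic convergence gives $h = \lim \rho_n(g) = \rho_\infty(g) \in G_\infty$, completing the inclusion.

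For the harder direction, assume strong convergence and suppose UEP fails. Then there exist a constant $C$, indices $n_k$, and elements $g_k \in \G$ with $d_\G(1,g_k) \to \infty$ but $d_\Hyp(\rho_{n_k}(g_k) \cdot O, O) \leq C$. If $(n_k)$ has a bounded subsequence I may assume $n_k \equiv n$, and then faithfulness of $\rho_n$ combined with discreteness of $G_n$ shows only finitely many $g \in \G$ satisfy $d_\Hyp(\rho_n(g)\cdot O, O) \leq C$, contradicting $d_\G(1,g_k) \to \infty$. So I may assume $n_k \to \infty$. Using geometric convergence together with Lemma~\ref{cores}, there are bi-Lipschitz embeddings $\tilde{\psi}_k \co B(O;R) \to \HHH^3$ (for any fixed $R > C$ and all sufficiently large $k$) that converge to the identity on compacta and that identify elements of $G_\infty$ whose orbit of $O$ lies well inside $B(O;R)$ with the corresponding elements of $G_{n_k}$ via $(\psi_k)_* = \rho_{n_k} \circ \rho_\infty^{-1}$. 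In particular, each $\rho_{n_k}(g_k) \cdot O$ must eventually coincide with $\tilde\psi_k(\rho_\infty(h_k) \cdot O)$ for some $h_k \in \G$ with $\rho_\infty(h_k) \cdot O$ in a slightly larger ball. Since $G_\infty$ is discrete only finitely many such $h_k$ are possible, so along a subsequence $h_k \equiv h$, giving $\rho_{n_k}(g_k) = \rho_{n_k}(h)$ for large $k$; faithfulness then forces $g_k = h$, contradicting $d_\G(1,g_k) \to \infty$.

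The principal technical obstacle lies in the second case of the harder direction, namely in converting the purely topological/Chabauty statement of geometric convergence into the precise matching of group elements of $G_\infty$ and $G_{n_k}$ in a bounded region around $O$. The delicate point is to exploit the fact that the bi-Lipschitz maps $\tilde\psi_k$ coming from strong convergence converge to the identity on compacta (so that the image of a given orbit point $\rho_\infty(h)\cdot O$ lands within arbitrarily small distance of itself and therefore equals the unique nearby point of the discrete orbit $G_{n_k}\cdot O$), together with Lemma~\ref{cores}, which guarantees that this geometric identification agrees with the algebraic identification $\rho_{n_k}\circ \rho_\infty^{-1}$. Once this matching is in place, discreteness, faithfulness and torsion-freeness close the argument. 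The base-frame conventions of Section~\ref{sec:alggeolts} must be used to ensure $\tilde\psi_k$ can be chosen to fix $O$ up to a term tending to zero; this is why fixing the base-frames as projections of a single frame in $\HHH^3$ is so important in our setup.
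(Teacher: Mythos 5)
Your proof is correct. Note that the present paper does not actually prove Proposition~\ref{uep} --- it imports it from \cite{mahan-series1} (Proposition 5.3) --- but your argument follows exactly the route that source takes and that the surrounding text of this paper presupposes: the negation of UEP as a sequence $g_k$ with $|g_k|\to\infty$ and bounded displacement (cf.\ the reference to \cite{mahan-series1} Lemma 5.2 in Section~\ref{sec:nonUC}), the orbit-accumulation argument for the direction UEP $\Rightarrow$ strong, and, for the converse, the bi-Lipschitz approximants $\tilde\psi_k$ of Section~\ref{sec:alggeolts} together with Lemma~\ref{cores} to match the geometric and algebraic identifications of group elements near the basepoint. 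You have also correctly isolated the genuine subtlety (that every element of $G_{n_k}$ with small displacement at $O$ is accounted for by the embedding $\psi_k$, so that discreteness of $G_\infty$ and faithfulness can be applied).
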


\begin{theorem}[\cite{mahan-series1} Theorem 5.7]\label{unifcrit1}  Let $\Gamma$ be a geometrically finite Kleinian group  and let $\rho_n: \G \to G_n$ be  weakly type preserving  isomorphisms to Kleinian groups. 
Suppose that $\rho_n$ converges algebraically to a 
representation $\rho_{\infty}$. Then  if   $(\rho_n)$ satisfies UEPP, the  $CT$-maps  $\hat i_n\co  \Lambda_{\G} \to \Lambda_{n}$
converge uniformly to $\hat i_{\infty}$. If  $\Gamma$ is non-elementary,  the converse also holds.
\end{theorem}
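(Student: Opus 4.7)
The key geometric fact is that a $\HHH^3$-geodesic avoiding $B_{\HHH}(O; R)$ has Euclidean diameter — including its endpoints in $\Chat$ — at most $C e^{-R}$ for an absolute constant $C$. Combined with UEPP, this turns uniform word-metric separation from $1$ in $\Gr \G$ into Euclidean proximity on $\Chat$, simultaneously for all $\rho_n$.

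For the forward direction, I would first note that UEPP automatically passes to $n = \infty$: if $\lambda$ has endpoints $g, g'$ and lies outside $B_{\G}(1; N)$, then $[j_n(\lambda)] \to [j_\infty(\lambda)]$ in $\HHH^3$ by algebraic convergence of endpoints, and avoiding $B_{\HHH}(O; f'(N))$ is a closed condition. For $\xi \in \Lambda_{\G}$ choose a $d_{\G}$-geodesic ray from $1$ to $\xi$; let $g_N$ be its vertex at word length $N$ and let later vertices $g_k \to \xi$. The sub-segment $[g_N, g_k]$ lies outside $B_{\G}(1; N)$, so $[j_n(g_N), j_n(g_k)]$ avoids $B_{\HHH}(O; f'(N))$; passing to the limit $k \to \infty$, so that $j_n(g_k) \to \hat i_n(\xi)$, the limit $\HHH^3$-geodesic $[j_n(g_N), \hat i_n(\xi)]$ still avoids the ball. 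The Euclidean lemma above then gives $d_{\mathbb E}(j_n(g_N), \hat i_n(\xi)) \le C e^{-f'(N)}$ for every $n \in \mathbb N \cup \{\infty\}$.

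Given $\epsilon > 0$, pick $N$ with $C e^{-f'(N)} < \epsilon/3$. Since $\G$ is finitely generated, the sphere $S_N = \{g \in \G : d_{\G}(1,g) = N\}$ is finite, so algebraic convergence yields $n_0$ with $d_{\mathbb E}(j_n(g), j_\infty(g)) < \epsilon/3$ for all $n \ge n_0$ and $g \in S_N$. For any $\xi \in \Lambda_{\G}$ choose $g_N \in S_N$ on a ray from $1$ to $\xi$; the triangle inequality through $j_n(g_N)$ and $j_\infty(g_N)$ gives $d_{\mathbb E}(\hat i_n(\xi), \hat i_\infty(\xi)) < \epsilon$ for all $n \ge n_0$, uniformly in $\xi \in \Lambda_{\G}$.

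For the converse, assume $\G$ is non-elementary and $\hat i_n \to \hat i_\infty$ uniformly; since $\Lambda_{\G}$ is compact and $\hat i_\infty$ is continuous, $\{\hat i_n\}$ is equicontinuous. If UEPP failed, there would be $R > 0$, $N_k \to \infty$, indices $n_k$, and segments $\lambda_k$ outside $B_{\G}(1; N_k)$ with endpoints $a_k, b_k$ such that $[j_{n_k}(a_k), j_{n_k}(b_k)]$ enters $B_{\HHH}(O; R)$, forcing Euclidean separation $\ge \delta > 0$ of those endpoints. Using non-elementarity, one extends $\lambda_k$ on both ends to bi-infinite $d_{\G}$-geodesics whose tails keep word length at least $N_k$ and terminate at points $\xi_k, \eta_k \in \Lambda_{\G}$. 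Applying the Theorem~\ref{crit1} criterion to the domain $\G$ (with its own modulus $f_\G$) along these tails gives $d_{\mathbb E}(\xi_k, j_\G(a_k)) \to 0$ and similarly for $\eta_k$; since $\lambda_k$ itself lies outside $B_\G(1; N_k)$ one also obtains $d_{\mathbb E}(j_\G(a_k), j_\G(b_k)) \to 0$, hence $d_{\mathbb E}(\xi_k, \eta_k) \to 0$. The corresponding estimates for $\rho_{n_k}$ keep $\hat i_{n_k}(\xi_k), \hat i_{n_k}(\eta_k)$ within $\delta/4$ of $j_{n_k}(a_k), j_{n_k}(b_k)$ respectively, contradicting equicontinuity. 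The main obstacle is precisely this converse step: realizing the failure of UEPP as a failure at genuine nearby pairs in $\Lambda_{\G}$ requires non-elementarity of $\G$ and careful handling when $\G$ has cusps, so that $\Gr \G$ is only relatively hyperbolic.
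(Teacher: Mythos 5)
This theorem is imported from \cite{mahan-series1} (Theorem 5.7) and is not reproved in the present paper, so there is no in-text proof to compare against; I assess your argument directly. Your forward direction is essentially the standard argument and is sound: UEPP together with the fact that a geodesic avoiding $B_{\HHH}(O;R)$ has Euclidean diameter at most $Ce^{-R}$ pins $\hat i_n(\xi)$ uniformly close to $j_n(g_N)$ for every $n\in\mathbb N\cup\{\infty\}$, and algebraic convergence on the finite sphere $S_N$ finishes the estimate. (You should still justify that every $\xi\in\Lambda_{\G}$ is the endpoint of a word-geodesic ray whose $j_{\G}$-image converges to $\xi$; this uses the existence of the $CT$-map for $\G$ itself, i.e.\ Floyd's theorem, and is where parabolic fixed points require a word of care, but it is routine.)

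The converse has a genuine gap, sitting exactly at the step you yourself flag as the main obstacle. First, the claim that $[j_{n_k}(a_k), j_{n_k}(b_k)]$ entering $B_{\HHH}(O;R)$ forces Euclidean separation $\geq\delta$ of its endpoints is false when the failure of UEPP is a failure of UEP: $\lambda_k$ may degenerate to a single vertex $a_k=b_k$ with $d_{\HHH}(O,j_{n_k}(a_k))\leq R$, and more generally one endpoint may simply stay in a bounded ball; that case needs a separate treatment. Second, and more seriously, the assertion that ``the corresponding estimates for $\rho_{n_k}$ keep $\hat i_{n_k}(\xi_k)$ within $\delta/4$ of $j_{n_k}(a_k)$'' is circular. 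To place $\hat i_{n_k}(\xi_k)$ Euclidean-close to $j_{n_k}(a_k)$ you need the geodesics $[j_{n_k}(a_k), j_{n_k}(c)]$, for $c$ along the tail, to avoid balls $B_{\HHH}(O;R_k)$ with $R_k\to\infty$ uniformly in $k$; the individual moduli $f_{n_k}$ supplied by the existence of each single $CT$-map $\hat i_{n_k}$ only give $d_{\mathbb E}(j_{n_k}(a_k),\hat i_{n_k}(\xi_k))\leq Ce^{-f_{n_k}(N_k)}$, and $f_{n_k}(N_k)$ need not tend to infinity: uniformity of these moduli is precisely the UEPP-type statement whose failure you have assumed. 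The standard repair, and the real role of non-elementarity, is to replace ray-endpoints by translated fixed points $g\cdot u^{\pm}$ of a fixed loxodromic $h\in\G$ (chosen with $\rho_\infty(h)$ loxodromic): the geodesic $[\hat i_n(a_k u^+),\hat i_n(a_k u^-)]=\rho_n(a_k)\cdot\Ax(\rho_n(h))$ passes within a distance of $j_n(a_k)$ that is bounded uniformly in $n$ by algebraic convergence, which supplies the missing uniform link between orbit points and limit points in the target groups; in the domain the same translated axes show that the chosen limit points over $a_k$ and $b_k$ coalesce as $N_k\to\infty$, and uniform equicontinuity of $\{\hat i_n\}$ then yields the contradiction in both the UEP-failure and the genuine pair-of-points cases.
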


Inverting the function $f'$ in the definition of UEPP gives a slight modification of the criterion in Theorem~\ref{unifcrit1}  convenient to our purposes. Note that in the statement which follows, we do not need to assume that $f_1(N) \to \infty$ as $N \to \infty$.

\begin{cor} \label{unifcrit1a}  Let $\Gamma$ be a geometrically finite Kleinian group  and let $\rho_n: \G \to G_n$ be  weakly type preserving  isomorphisms to Kleinian groups. 
Suppose that $\rho_n$ converges algebraically to a 
representation $\rho_{\infty}$ and  that there exists a function $f_1 \co \mathbb N \to \mathbb N$ such that for any $L \in \mathbb N$, whenever  $\lambda$ is a $d_{\G}$-geodesic segment  lying outside $B_{\G}(1; f_1(L))$ in $\Gr \G$,  the  $\HHH^3$-geodesic   $[j_n( \lambda)]$ lies outside $B_{\HHH}(O; L)$  for all $n \in \mathbb N$.
Then   the $CT$-maps  $\hat i_n\co  \Lambda_{\G} \to \Lambda_{n}$
converge uniformly to $\hat i_{\infty}$.  \end{cor}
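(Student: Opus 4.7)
The plan is to derive Corollary \ref{unifcrit1a} directly from Theorem \ref{unifcrit1} by producing, from the given function $f_1$, a function $f'$ that witnesses condition UEPP of Definition \ref{crit:unifcrit1}. The corollary is essentially the observation that UEPP is invariant under the inversion $f' \leftrightarrow f_1$, even when $f_1(L)$ is not assumed to tend to infinity.

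I would define $f' : \mathbb N \to \mathbb N$ by
$$ f'(N) := \max\{L \in \mathbb N : L \le N \text{ and } f_1(L) \le N\}, $$
with the convention that $f'(N) := 0$ when this set is empty. The cutoff $L \le N$ guarantees the maximum is finite. To see that $f'(N) \to \infty$, fix $M \in \mathbb N$: as soon as $N \ge \max(M, f_1(M))$, the integer $M$ lies in the set defining $f'(N)$, so $f'(N) \ge M$. Note that no growth assumption on $f_1$ is used here; one only needs that $f_1(M) < \infty$ for each fixed $M$, which is built into $f_1 : \mathbb N \to \mathbb N$.

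Finally I would verify UEPP for $f'$. Suppose $\lambda$ is a $d_\G$-geodesic segment lying outside $B_\G(1; N)$, and set $L := f'(N)$. By construction $f_1(L) \le N$, so $B_\G(1; f_1(L)) \subset B_\G(1; N)$ and $\lambda$ a fortiori lies outside $B_\G(1; f_1(L))$. The hypothesis of the corollary then yields that $[j_n(\lambda)]$ lies outside $B_\HHH(O; L) = B_\HHH(O; f'(N))$ for every $n \in \mathbb N$, which is precisely UEPP. Theorem \ref{unifcrit1} therefore gives uniform convergence of $\hat i_n$ to $\hat i_\infty$. The argument is purely formal and I do not anticipate any substantive obstacle; the only point to get right is the cutoff $L \le N$ in the definition of $f'(N)$, which prevents $f'(N)$ from being infinite in pathological cases such as $f_1$ being bounded.
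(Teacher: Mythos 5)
Your proposal is correct and follows essentially the same route as the paper: both arguments formally invert $f_1$ to manufacture a function $f'$ witnessing UEPP and then invoke Theorem~\ref{unifcrit1}. The only cosmetic difference is that the paper first replaces $f_1$ by a strictly increasing majorant before inverting, whereas your $\max$ with the cutoff $L\le N$ handles a possibly bounded or non-monotone $f_1$ directly; both devices are sound.
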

\begin{proof} In view of Theorem~\ref{unifcrit1}, it is enough to see the given condition   implies UEPP. We do this by inverting the function $f_1$. Precisely, say $L' > L$. If  $\lambda$ is a $d_{\G}$-geodesic segment  lying outside $  B_{\G}(1; f_1(L'))$ in $\Gr \G$,    the  $\HHH^3$-geodesic   $[j_n( \lambda)]$ lies outside $B_{\HHH}(O;L')$ and hence certainly outside $B_{\HHH}(O;L)$. Thus we may modify $f_1$ if needed so that 
$ f_1(L') > f_1(L)$. Hence without loss of generality using an inductive argument we may assume that $f_1$ is strictly increasing, so that in particular $f_1(N) \to \infty$ as $n \to \infty$.

Now given $N \in \mathbb N$ define $f'(N) = L$, where $L $ is the unique positive integer  such that $N \in [f_1(L), f_1(L+1))$.  Note that with this definition, $f'(N) \to \infty$ as $N \to \infty$.

Suppose given $N$ and that   $f'(N) = L$. We are given that whenever 
 $\lambda$ is a $d_{\G}$-geodesic segment  lying outside $  B_{\G}(1; f_1(L))$ in $\Gr \G$,    the  $\HHH^3$-geodesic   $[j_n( \lambda)]$ lies outside $B_{\HHH}(O;L)$  for all $n \in \mathbb N$.  Thus whenever 
 $\lambda$ is a $d_{\G}$-geodesic segment  lying outside $  B_{\G}(1; N)$ in $\Gr \G$, it also lies outside $  B_{\G}(1;f_1(L))$ and hence   the  $\HHH^3$-geodesic   $[j_n( \lambda)]$ lies outside $B_{\HHH}(O;L)  =B_{\HHH}(O;f'(N)) $ for all $n \in \mathbb N$.
   This is exactly the condition UEPP.
 \end{proof}

Applying a similar inversion  to Theorem~\ref{crit1}  we obtain immediately:
\begin{cor} \label{unifcrit1aa}  Let $\Gamma$ be a geometrically finite Kleinian group  and let $\rho \co  \G \to G$ be a weakly type preserving  isomorphism  to a non-elementary  Kleinian group $G$ for which a $CT$-map exists. 
Then there exists a function $f_1 \co \mathbb N \to \mathbb N$ such that for any $L \in \mathbb N$,  whenever  $\lambda$ is a $d_{\G}$-geodesic segment  lying outside $B_{\G}(1; f_1(L))$ in $\Gr \G$,  the  $\HHH^3$-geodesic   $[j( \lambda))]$ lies outside $B_{\HHH}(O; L)$. \end{cor}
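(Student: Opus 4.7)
The plan is to invert the function supplied by Theorem~\ref{crit1}, in direct parallel with the inversion carried out in the proof of Corollary~\ref{unifcrit1a}. Since a CT-map for $\rho \co \G \to G$ exists, Theorem~\ref{crit1} provides a function $f \co \mathbb N \to \mathbb N$ with $f(N) \to \infty$ as $N \to \infty$, such that whenever $\lambda$ is a $d_{\G}$-geodesic segment lying outside $B_{\G}(1; N)$ in $\Gr \G$, the $\HHH^3$-geodesic $[j(\lambda)]$ lies outside $B_{\HHH}(O; f(N))$.

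Given such an $f$, I would define $f_1 \co \mathbb N \to \mathbb N$ by
$$f_1(L) = \min\{N \in \mathbb N : f(N) \geq L\},$$
which is well-defined precisely because $f(N) \to \infty$. To verify the required conclusion, fix $L \in \mathbb N$ and let $\lambda$ be a $d_{\G}$-geodesic segment lying outside $B_{\G}(1; f_1(L))$. Applying Theorem~\ref{crit1} with $N = f_1(L)$ shows that $[j(\lambda)]$ lies outside $B_{\HHH}(O; f(f_1(L)))$. By the defining property of $f_1$ we have $f(f_1(L)) \geq L$, so $B_{\HHH}(O; L) \subseteq B_{\HHH}(O; f(f_1(L)))$, and hence $[j(\lambda)]$ lies outside $B_{\HHH}(O; L)$, as required.

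There is no substantive obstacle; the corollary is a purely formal rearrangement of Theorem~\ref{crit1}. The only minor subtlety is that $f$ is only guaranteed to tend to infinity (not to be injective, monotone, or surjective), so one cannot literally solve $f(N) = L$ and must instead take the minimum above. Note that, in contrast to Corollary~\ref{unifcrit1a}, the statement explicitly does not demand $f_1(L) \to \infty$, so no monotonization step is needed and the proof terminates at this point. The non-elementary hypothesis on $G$ plays no role in this direction; it only becomes relevant in the converse implication of Theorem~\ref{crit1}, which is not being used here.
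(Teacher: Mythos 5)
Your proof is correct and is essentially the argument the paper intends: the paper disposes of this corollary with the single remark that one "applies a similar inversion to Theorem~\ref{crit1}," and your definition $f_1(L)=\min\{N: f(N)\geq L\}$ is precisely that inversion, with the well-definedness and the inequality $f(f_1(L))\geq L$ correctly justified. Your closing observations (no monotonization needed since $f_1(L)\to\infty$ is not required, and the non-elementary hypothesis being irrelevant to this direction) are also accurate.
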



\section{Strong Convergence}\label{sec:thmA}

In this section we prove Theorem~\ref{thm:strong=unif}. To set the scene, we begin with a brief discussion  in the case of strong convergence and bounded geometry, that is, when the injectivity radii of all manifolds in the sequence are uniformly bounded below.   We then turn to the general situation of Theorem~\ref{thm:strong=unif}, outlining as we go the relevant steps in the proof for a single $CT$-map as in~\cite{mahan-split}.

\subsection{The bounded geometry case}\label{sec:bndgeom}

In this section, which is not necessary for the general result, we illustrate how to use the  criteria of Section~\ref{sec:CTMaps} to prove Theorem~\ref{thm:strong=unif} in the case of a singly or doubly degenerate surface group with bounded geometry. This is essentially Miyachi's result~\cite{miyachi}. The brief sketch below is a reprise of the first author's original arguments   in \cite{mahan=jdg, mahan=ramanujan}.   This sketch  may also be useful to clarify the flow of the arguments from~\cite{mahan-split} which are the basis of our proof  of Theorem~\ref{thm:strong=unif}.

Recall that if $M$ is a hyperbolic $3$-manifold, the \emph{injectivity radius} $r(M;x)$ of $M$ at $x \in M$ is $r/2$ where $r$ is the length of the shortest loop 
based at $x$.  The manifold is said to have \emph{bounded geometry} if there exists 
$a >0$ such that $r(M;x) \geq a$ for all $x \in M$; in this case the \emph{injectivity radius} of $M$ is $\inf_{x \in M} r(M,x)$.

\begin{theorem}\label{thm:strong=unif1}  
 Let $\Gamma$ be a Fuchsian group such that $  \HHH^2/\G$ is a closed hyperbolic surface. Let $\rho_n \co \Gamma  \to G_n$ be a sequence
 of  strictly type-preserving
isomorphisms to geometrically finite groups $G_n$, which converge strongly to a singly or  doubly degenerate surface group $G_{\infty} = \rho_{\infty} (\Gamma )$.  Suppose moreover that  the injectivity radii of $M_n$ are uniformly bounded below by some $a>0$ for $n \in \mathbb N \cup \infty$. Then the sequence of $CT$-maps $\hat i_n: \Lambda_{\Gamma} \to  \Lambda_{G_n}$ converges uniformly to $\hat i_{\infty}: \Lambda_{\Gamma} \to  \Lambda_{G_{\infty}}$.
\end{theorem}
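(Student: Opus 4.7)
By Corollary~\ref{unifcrit1a}, it suffices to construct a function $f_1\co\mathbb N\to\mathbb N$ such that for every $L$, every $n\in\mathbb N$, and every $d_\Gamma$-geodesic segment $\lambda\subset\Gr\Gamma\setminus B_\Gamma(1;f_1(L))$, the $\HHH^3$-geodesic $[j_n(\lambda)]$ lies outside $B_\HHH(O;L)$. Since the $CT$-map $\hat i_\infty$ exists, Corollary~\ref{unifcrit1aa} applied to $\rho_\infty$ already supplies such a function $f_1^\infty$ for the single limit representation. The task is to promote $f_1^\infty$ to a sequence-uniform $f_1$; I propose to do this by contradiction, combining strong convergence with the bounded-geometry (Minsky--Bonahon) model of $M_\infty$ in the style of~\cite{mahan=jdg, mahan=ramanujan}.

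\textbf{Contradiction setup and bi-Lipschitz transfer.} Suppose no such uniform $f_1$ exists. Then there are $L_0$, sequences $N_k\to\infty$ and $n_k$, and $d_\Gamma$-geodesics $\lambda_k$ outside $B_\Gamma(1;N_k)$ with $[j_{n_k}(\lambda_k)]\cap B_\HHH(O;L_0)\neq\emptyset$. If $\{n_k\}$ has a bounded subsequence we may take $n_k$ constant; then Corollary~\ref{unifcrit1aa} applied to the fixed $\rho_{n_k}$ (whose $CT$-map exists by Theorem~\ref{firstresult}) already contradicts $N_k\to\infty$. Hence assume $n_k\to\infty$. By Corollary~\ref{cores1} and the definition of geometric convergence, for every $R>0$ and every $\varepsilon>0$ there exist $(1+\varepsilon)$-bi-Lipschitz embeddings $\tilde\psi_k\co B_\HHH(O;R)\to\HHH^3$ (for $k$ sufficiently large) with $\tilde\psi_k(O)\to O$ that intertwine $\rho_\infty$ and $\rho_{n_k}$ on every element whose $O$-orbit stays inside $B_\HHH(O;R)$. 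In bounded geometry, $M_\infty$ admits a simple product model $S\times I$ with level surfaces of uniformly bounded geometry whose lifts to $\HHH^3$ lie at $\HHH^3$-distance comparable to $|t|$ from $O$; via $\tilde\psi_k$, the same model, with uniform constants, applies to the $B_\HHH(O;R)$-piece of $M_{n_k}$.

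\textbf{Main step and principal obstacle.} Let $p_k\in[j_{n_k}(\lambda_k)]\cap B_\HHH(O;L_0)$ and let $\alpha_k$ be the maximal subsegment of $[j_{n_k}(\lambda_k)]$ through $p_k$ contained in $B_\HHH(O;R)$. Pulling $\alpha_k$ back by $\tilde\psi_k^{-1}$ produces a $(1+o(1))$-quasigeodesic in $\HHH^3$ that passes through $B_\HHH(O;L_0+o(1))$; its $\HHH^3$-straightening $\sigma_k^\ast$ still meets $B_\HHH(O;L_0+C)$ for some uniform $C$. By Corollary~\ref{unifcrit1aa} applied to $\rho_\infty$, the endpoints of $\sigma_k^\ast$ are $\rho_\infty$-images of elements at $d_\Gamma$-distance at most $f_1^\infty(L_0+C)$ from $1$. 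Conjugating back by $\tilde\psi_k$ then yields a subword of $\lambda_k$ meeting $B_\Gamma(1;f_1^\infty(L_0+C)+D)$, where $D$ absorbs the ``exit cost'' from $\partial B_\HHH(O;R)$ when $\alpha_k$ is strictly shorter than $[j_{n_k}(\lambda_k)]$. Since $N_k\to\infty$ while $D$ depends only on $R$ and the bounded-geometry constants (not on $k$), this is the desired contradiction. The hard part will be controlling $D$ uniformly in $k$: when $[j_{n_k}(\lambda_k)]$ exits $B_\HHH(O;R)$ one must argue that the exit points of the geodesic label only a bounded number of Cayley generators separating them from the true endpoints $\rho_{n_k}(\gamma_k^\pm)\cdot O$. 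This is exactly the bounded-geometry input: the level-surface model of $M_{n_k}$, transported from that of $M_\infty$ with uniform constants, forces any $\HHH^3$-geodesic that crosses a level to label a $d_\Gamma$-geodesic of comparable length in $\Gr\Gamma$, and the uniform lower bound on the injectivity radius is precisely what makes this work simultaneously for all $n_k$, just as in~\cite{mahan=jdg, mahan=ramanujan} for a single representation.
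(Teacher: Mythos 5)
Your reduction to Corollary~\ref{unifcrit1a} and your disposal of the case of bounded $n_k$ are fine, but the main step of the contradiction argument has a genuine gap, and it sits exactly where you yourself flag ``the hard part.'' The contrapositive of Corollary~\ref{unifcrit1aa} applies to a Cayley-graph geodesic $\mu$ and the hyperbolic geodesic $[j_\infty(\mu)]$ joining the orbit points of its endpoints; it says nothing about a geodesic segment whose endpoints are exit points from $\partial B_{\HHH}(O;R)$. The segment $\sigma_k^\ast$ you produce is not of the form $[j_\infty(\mu)]$. Even if you perturb its endpoints to nearby orbit points $\rho_\infty(h_k^\pm)\cdot O$, the elements $h_k^\pm$ lie among the finitely many group elements whose orbit points meet the fixed ball $B_{\HHH}(O;R)$, so $d_\Gamma(1,h_k^\pm)$ is bounded independently of $k$; the Cayley geodesic $[h_k^-,h_k^+]$ then lies entirely in a bounded ball about $1$, and no contradiction with $N_k\to\infty$ can be extracted from it. To get a contradiction you must exhibit a subsegment of $\lambda_k$ itself (hence outside $B_\Gamma(1;N_k)$) whose $j_\infty$-geodesic enters a fixed ball, and nothing in your argument produces one.

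The deeper reason the transfer strategy fails as stated is that the correspondence between points of the hyperbolic geodesic $[j_{n_k}(\lambda_k)]$ and points (group elements) of the Cayley geodesic $\lambda_k$ is precisely the content of the ladder construction and its coarse Lipschitz projection: the hyperbolic geodesic may dive arbitrarily deep into the ends of $M_{n_k}$ before returning near $O$, so it is not captured by any fixed $B_{\HHH}(O;R)$ on which the bi-Lipschitz comparison maps from geometric convergence are defined, and the ``exit cost'' $D$ cannot be bounded in terms of $R$ and the model constants alone. Your closing sentence, that the level-surface model ``forces any $\HHH^3$-geodesic that crosses a level to label a $d_\Gamma$-geodesic of comparable length,'' is essentially a restatement of the quasiconvexity of the ladder, i.e.\ of the theorem being proved, not an available input. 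The paper's proof is direct rather than by contradiction: it runs the ladder construction of \cite{mahan=jdg} in each $M_n$ separately and observes that the quasi-isometry constants of the model, the quasiconvexity constant of the ladder, and hence the function $f$ in Theorem~\ref{crit1}, depend only on the topological type of $S$ and the lower bound $a$ for the injectivity radius, which by hypothesis is uniform over the sequence; geometric convergence and compact-core transfer play no role in this bounded-geometry case. To repair your argument you would have to import that uniformity statement anyway, at which point the contradiction scaffolding and the bi-Lipschitz transfer become superfluous.
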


\begin{proof} It is sufficient to show that the sequence $(\rho_n)$ satisfies UEPP.
That is, we have to show  that there exists a function $f' \co \mathbb N \to \mathbb N$ such that $f'(N) \to \infty$ as $N \to \infty$ and such that whenever  $\lambda$ is a $d_{\G}$-geodesic segment  lying outside $B_{\G}(1; N)$ in $\Gr \G$,  the  $\HHH^3$-geodesic   $[j_n( \lambda)]$ lies outside $B_{\HHH}(O; f'(N))$  for all $n \in \mathbb N$.

By~\cite{mahan=jdg} Theorem 4.7, the Cannon-Thurston map exists for the group $G_{\infty}$. Hence, by Theorem~\ref{crit1},  there exists a   non-negative function  $f \co \mathbb N \to \mathbb N$, such that 
 $f(N)\rightarrow\infty$ as $N\rightarrow\infty$, and such that whenever  $\lambda$ is a $d_{\G}$-geodesic segment  lying outside $B_{\G}(1; N)$ in $\Gr \G$,  the  $\HHH^3$-geodesic joining
the endpoints of $i_{\infty}(j_{\G}(\lambda))$ lies outside $B_{\HHH}(O_G; f(N))$ in $\HHH^3$.
What we have to do is to show that the same function $f$ works uniformly for the representations $\rho_n$ for all sufficiently large $n$.  

Finding  the function $f$ in~\cite{mahan=jdg} was based on the following construction.  
For simplicity, suppose that $G_{\infty}$ is doubly degenerate; the singly degenerate case is similar. 
Let $S$ be a topological surface homeomorphic to $\HHH^2/\G$. By results of Minsky~\cite{minsky-ends, minsky-rigidity}, one can pick a sequence of pleated surface maps $S \to S_n \subset M_{\infty}= \HHH^3/G_{\infty}, n \in \mathbb Z$, such that the distance between $S_n, S_{n+1}$ is uniformly bounded above and below, and such that, with respect to the induced hyperbolic metrics on the $S_n$,  there are uniformly bounded quasi-isometries $S_n \to  S_{n+1}$. One deduces that the universal cover $\widetilde M_{\infty} $ of $M_{\infty}$ is  quasi-isometric to a `tree' of Gromov hyperbolic metric spaces. This is a Gromov hyperbolic space $\mathcal X$ equipped with a map $P$ onto a simplicial tree $\mathcal T$, which in the case of a doubly degenerate surface group can be taken to be the tree whose vertices are the integers, with a unit length edge joining $n$ to $n+1, n \in \mathbb Z$. The inverse image of each vertex is itself a 
 Gromov hyperbolic space; these spaces are uniformly properly embedded into $\mathcal X$. Moreover for each pair of adjacent vertices $v,v'$, there is a quasi-isometry between the spaces $P^{-1}(v), P^{-1}(v')$, again assumed to have quasi-isometry constants uniform over vertices $v$.

 In the present case, each space  $P^{-1}(n)$ is quasi-isometric to the universal cover of $S_n$. In particular the map $P \co P^{-1}(0) \to \mathcal X$ should be thought of as the lift to universal covers of the map $S_0 \to M_\infty$.  Since $S_0 = \HHH^2/\G$ is by assumption a closed surface, its universal cover is quasi-isometric to the Cayley graph of $\G$.  If we assume that the lift $O \in \HHH^2$ of the basepoint $ o \in S_0$  maps to the lifted basepoint $O \in \HHH^3 = \widetilde M_{\infty}$, then  we can replace   a geodesic $\lambda$ as in the statement of the theorem with a geodesic in the hyperbolic space $P^{-1}(0)$, see the comments following Theorem~\ref{crit1'}. 

We have to compare $\lambda$ with the hyperbolic geodesic $[\lambda]$ in $\til M = \HHH^3$ joining its endpoints.
 The key idea is to construct a `ladder' $\mathcal L_{\lambda} \subset \mathcal X$ by flowing $\lambda$  up the levels in $\mathcal X$ using the quasi-isometries between the levels, see~\cite{mahan=jdg} for details. By constructing a coarse Lipschitz projection from 
$\mathcal X$ to $\mathcal L_{\lambda}$, it
is shown that $\mathcal L_{\lambda}$ is uniformly quasi-convex in $\mathcal X$. That is, there exists $k>0$ such that any  $\mathcal X$-geodesic with ends in $\mathcal L_{\lambda}$ lies within distance $k$ of $\mathcal L_{\lambda}$.  A short argument given in the proof of ~\cite{mahan=jdg} Theorem 3.10  shows that this is sufficient to construct the required  function $f$  for the  manifold $M$.

This function $f$ depends on the quasi-isometry between $\widetilde M$ and $\mathcal X$, and it is not hard to see by inspection that the constants depend only on the injectivity radius of $M$.  The constant $k$ has a similar dependence.
Thus if we have a family of manifolds all of which have the same lower bound on injectivity radii,  the same function $f$ works simultaneously for all $M_n$ and the criterion of Corollary~\ref{unifcrit1a} is satisfied.
\end{proof}

The problem with directly extending this proof to the situation of unbounded geometry, is that it requires a ladder and projection whose constants depend on the injectivity radius of the whole end. This is clearly not possible in the case of unbounded geometry.  
However we note that the same methods can be extended to the case of a surface with punctures, see~\cite{mahan=ramanujan} Section 5.5.  (Essentially, this uses similar arguments about crossing horoballs  to those  in~\cite{mahan-series1}.) This gives the following result which we use in the proof of Proposition~\ref{ptwisecnvg} in Section~\ref{sec:brock}.

\begin{theorem}\label{thm:strong=unif2}  
 Let $\Gamma$ be a Fuchsian group such that $  \HHH^2/\G$ is a finite area hyperbolic surface. Let $\rho_n \co \Gamma  \to G_n$ be a sequence
 of  strictly type-preserving
isomorphisms to geometrically finite groups $G_n$, which converge strongly to a singly or   doubly degenerate surface group $G_{\infty} = \rho_{\infty} (\Gamma )$.  Suppose moreover that  the injectivity radii of $M_n$ outside cusps are uniformly bounded below for $n = 1, 2, \ldots, \infty$. Then the sequence of $CT$-maps $\hat i_n: \Lambda_{\Gamma} \to  \Lambda_{G_n}$ converges uniformly to $\hat i_{\infty}: \Lambda_{\Gamma} \to  \Lambda_{G_{\infty}}$.
\end{theorem}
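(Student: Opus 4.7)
The plan is to verify the criterion of Corollary~\ref{unifcrit1a} for the sequence $(\rho_n)$: produce a function $f_1\co \mathbb N\to \mathbb N$ such that whenever $\lambda$ is a $d_\G$-geodesic segment lying outside $B_\G(1;f_1(L))$ in $\Gr \G$, the $\HHH^3$-geodesic $[j_n(\lambda)]$ lies outside $B_\HHH(O;L)$ for all $n\in \mathbb N\cup\{\infty\}$. The challenge, compared with Theorem~\ref{thm:strong=unif1}, is that $M_\infty$ and each $M_n$ now contain cusps, so the Minsky pleated-surface sweepout and ensuing tree-of-hyperbolic-spaces model are only uniformly quasi-isometric in the thick part.

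First I would build the geometric model for $\widetilde{M_\infty}$ via Minsky's pleated surfaces in each geometrically infinite end: the hypothesis that the thick part has uniformly bounded geometry gives uniform quasi-isometries between consecutive pleated surfaces, producing a tree of metric spaces $\mathcal X$ quasi-isometric to $\widetilde{M_\infty}$ with vertex spaces the universal covers of pleated punctured surfaces. To cope with horoballs, I would electrocute the full lifts of Margulis cusp neighborhoods in $\mathcal X$ as in Section~\ref{sec:relhyp}: these lifts are uniformly quasi-convex, uniformly separated and mutually cobounded, so by Lemma~\ref{lemma:weakhyp} the electric space $\mathcal X_{el}$ is hyperbolic, with hyperbolicity constant depending only on the Margulis constant, the topology of $S$, and the thick-part injectivity radius $a$.

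Then I would repeat the ladder/projection construction of~\cite{mahan=jdg} inside $\mathcal X_{el}$ rather than $\mathcal X$, following the adaptation to punctures in~\cite{mahan=ramanujan} \S5.5. Given a $d_\G$-geodesic $\lambda$ in the vertex space $P^{-1}(0)$ containing the basepoint, flow it through the levels via the electric quasi-isometries between adjacent vertex spaces to obtain a ladder $\mathcal L_\lambda\subset \mathcal X_{el}$; construct a coarse Lipschitz retraction $\mathcal X_{el}\to\mathcal L_\lambda$ and conclude that $\mathcal L_\lambda$ is uniformly electric quasi-convex. Passing from the electric geodesic joining the endpoints of $\lambda$ to an electro-ambient quasi-geodesic, and then to the honest $\HHH^3$-geodesic, is supplied by Lemma~\ref{ea-genl}. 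Since horoballs lie inside the thin part, hyperbolic distance to $O$ is controlled from below by electric distance in the thick part, and the required function $f$ for $M_\infty$ falls out, with constants depending only on $a$, the Margulis constant, and the topology of $S$.

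The final, and main, obstacle is uniformity across the sequence. Using Corollary~\ref{cores1} and strong convergence, I would identify a compact core and pair the ends and cusps of $M_\infty$ with those of each $M_n$ via bi-Lipschitz embeddings $\phi_n$ whose constants tend to $1$ on any prescribed compact set. On any ball $B(O;R)\subset\HHH^3$ the data required for the ladder in $M_n$ (pleated surfaces, Margulis tubes to electrocute, transition quasi-isometries) can be pulled back from $M_\infty$ with uniform bi-Lipschitz error; combined with the uniform thick-part injectivity radius bound this forces the electric hyperbolicity constant, the ladder quasi-convexity constant, and the electro-ambient tracking constant of Lemma~\ref{ea-genl} to be independent of $n$ for $n$ large. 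Hence the function $f$ produced for $\rho_\infty$ simultaneously serves every $\rho_n$, yielding exactly the UEPP-type bound required by Corollary~\ref{unifcrit1a}. The delicate point is to exclude the possibility that some short non-cuspidal curve in an $M_n$ develops an anomalously deep tube that wrecks uniform quasi-convexity; the strictly type-preserving hypothesis together with the uniform thick-part lower bound on injectivity radius is what rules this out.
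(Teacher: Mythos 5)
Your proposal is correct and follows essentially the same route as the paper: the paper proves the closed-surface bounded-geometry case (Theorem~\ref{thm:strong=unif1}) via the ladder/projection argument with constants depending only on the injectivity radius, and then disposes of the punctured case exactly as you do, by electrocuting horoballs and invoking the extension in~\cite{mahan=ramanujan}~\S5.5 together with the horoball-crossing arguments of~\cite{mahan-series1}, the uniform thick-part injectivity radius bound guaranteeing that one function $f$ serves all $n$.
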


 \subsection{Unbounded geometry} \label{sec:unbounded}
  
 Our proof of Theorem~\ref{thm:strong=unif} is based on  the method in~\cite{mahan-split}, which effectively verifies the condition of Theorem~\ref{crit1} for a single group. To show that UEPP holds for a sequence converging strongly to such a limit, we need to examine the argument carefully to understand the dependence of the constants on the limit group. 
We simplify by explaining the first part of the  proof in the case of surface groups, discussing extensions to  general manifolds with incompressible boundary
later.

The   proof of Theorem~\ref{thm:strong=unif} follows very closely that of the main result of \cite{mahan-split}, which can be roughly stated as:
\begin{theorem}[\cite{mahan-split} Theorem 7.1]\label{splitct}
Cannon-Thurston maps exist for simply or doubly degenerate surface Kleinian groups without cusps.
\end{theorem}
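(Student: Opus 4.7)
The plan is to verify the criterion of Theorem~\ref{crit1}: produce a function $f\co \mathbb N \to \mathbb N$ with $f(N)\to\infty$ such that whenever $\lambda$ is a $d_\G$-geodesic segment outside $B_\G(1;N)$, the $\HHH^3$-geodesic $[j_\G(\lambda)]$ lies outside $B_\HHH(O;f(N))$. Since $\G$ is a closed surface group with $\HHH^2/\G$ compact, $j_\G(\Gr\G)$ is quasi-isometric to a fixed pleated/hyperbolic surface $\widetilde S_0\subset\widetilde M = \HHH^3$, so without loss of generality $\lambda$ can be regarded as a geodesic in $\widetilde S_0$ and the statement becomes one about $\HHH^3$-geodesics with endpoints in $\widetilde S_0$.

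The core of the argument uses the Minsky model for a simply or doubly degenerate end. In the doubly degenerate case, $\widetilde M$ is bi-Lipschitz equivalent to a "tree of hyperbolic spaces" whose vertex spaces are universal covers of a sequence of \emph{split-level surfaces} $S_k$ at uniformly bounded electric distance after electrocuting Margulis tubes. Following the scheme recalled in Section~\ref{sec:bndgeom}, one first builds a \emph{ladder} $\mathcal L_\lambda\subset\widetilde M$ by flowing $\lambda$ level by level through the $\widetilde S_k$ via the quasi-isometries between consecutive split surfaces. The main technical step is to show that $\mathcal L_\lambda$ is uniformly quasi-convex in the \emph{electric} metric on $\widetilde M$ obtained by electrocuting the lifts of Margulis tubes. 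This is established by constructing a coarse Lipschitz retraction of $\widetilde M$ onto $\mathcal L_\lambda$ built inductively from projections to each level $\widetilde S_k$, and checking that the constants are controlled only by model data (subsurface projection bounds, block geometry), not by injectivity radius.

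Once quasi-convexity in the electric metric is in place, one extracts a conclusion about the true hyperbolic geodesic $[j_\G(\lambda)]$ using the electro-ambient technique of Section~\ref{sec:relhyp}. Namely, the electric quasi-geodesic along $\mathcal L_\lambda$ joining the endpoints of $\lambda$ is converted into an electro-ambient quasi-geodesic $\gamma$ by replacing each maximal arc inside an electrocuted tube with the hyperbolic geodesic between its entry and exit points. By Lemma~\ref{ea-genl}, $\gamma$ is a hyperbolic quasi-geodesic lying within bounded distance of $[j_\G(\lambda)]$; in particular $[j_\G(\lambda)]$ is uniformly close to $\mathcal L_\lambda$ in the hyperbolic metric away from Margulis tubes, and crossings of tubes are handled by standard horoball-type estimates as in~\cite{mahan-series1}.

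Finally, the required distance bound is obtained as follows. If $\lambda\subset\widetilde S_0$ lies outside $B_\G(1;N)$, then it sits outside a large ball of $\widetilde S_0$; since $\widetilde S_0$ embeds properly in $\widetilde M$ and the ladder $\mathcal L_\lambda$ is anchored on $\lambda$ at the base level, the electro-ambient quasi-geodesic $\gamma$, hence $[j_\G(\lambda)]$, leaves every $B_\HHH(O;R)$ once $N$ is large enough, yielding the desired $f(N)\to\infty$. The singly degenerate case is analogous after capping off the geometrically finite end with its convex core. The principal obstacle, and the technical heart of~\cite{mahan-split}, is the construction of the projection onto $\mathcal L_\lambda$ under unbounded geometry: one must simultaneously handle the hierarchy of subsurface projections defining the Minsky model, and verify that the quasi-convexity constants depend only on the topology of $S$ and on the universal relative-hyperbolicity constants of Section~\ref{sec:relhyp}, so that the argument survives passing to sequences of approximating groups in the proof of Theorem~\ref{thm:strong=unif}.
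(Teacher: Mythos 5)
Your overall architecture matches the argument sketched in Section~\ref{sec:thm7.1}: verify the criterion of Theorem~\ref{crit1} (in the form of Proposition~\ref{uepp}), build a ladder $\mathcal L_\lambda$ from the model by flowing $\lambda$ up through the levels, prove quasi-convexity by a coarse Lipschitz retraction, and descend to the hyperbolic geodesic via the electro-ambient machinery of Lemma~\ref{ea-genl}. But there is a genuine gap in the middle step: you electrocute \emph{only} the lifts of Margulis tubes, claim the ladder is uniformly quasi-convex in that tube-electrocuted metric, and then apply Lemma~\ref{ea-genl} to the tubes alone. That is precisely the bounded geometry argument of Section~\ref{sec:bndgeom}, and it fails in unbounded geometry. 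The split-level surfaces of the model have bounded geometry only away from the splitting annuli; the connected pieces of each block (the \emph{split components} of Section~\ref{sec:split}) can have arbitrarily large hyperbolic diameter and are not uniformly embedded, so the level-to-level quasi-isometries and the retraction onto $\mathcal L_\lambda$ have no uniform constants once only the tubes are collapsed. The metric actually used is the graph metric $d_{graph}$, obtained by electrocuting the Margulis tubes \emph{and} the split components; the retraction is coarsely Lipschitz, and $\mathcal L_\lambda$ quasi-convex, only with respect to $d_{graph}$, with constants depending only on the topology of $S$.

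Consequently the passage back to the hyperbolic metric cannot be a single application of Lemma~\ref{ea-genl} to Margulis tubes: a $d_{graph}$-quasi-geodesic says nothing a priori about hyperbolic distance, since entire split components have been crushed to bounded diameter. The missing input is the defining property of split geometry in Theorem~\ref{thm:splitgeom}, namely that the hyperbolic convex hull of each split component has uniformly bounded diameter in $d_{graph}$ (the graph quasi-convexity constant). This is what permits the admissible path along $\mathcal L_\lambda$ to be modified, through the intermediate metric $d_{CH}$ in which the convex hulls of the extended split components are electrocuted, into an electro-ambient quasi-geodesic as in Lemma~\ref{lemma6.12}, to which Lemma~\ref{ea-genl} is then applied. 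Without this step your argument does not control how far the hyperbolic geodesic $[j_\Gamma(\lambda)]$ can dive back towards the basepoint inside a geometrically uncontrolled split component, which is exactly the phenomenon separating the unbounded from the bounded geometry case.
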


The actual result on which we base the proof of Theorem~\ref{thm:strong=unif} is ~\cite{mahan-split} Corollary 6.13, which we restate in an equivalent formulation as Lemma~\ref{lemma6.12} below. To give more insight into what is involved, we begin 
 by sketching the relevant parts of
 the proof of  Theorem~\ref{splitct}. For this we first require a brief digression on split geometry as introduced in~\cite{mahan-split}, which we do  in 
 \ref{sec:split}. In \ref{sec:thm7.1} we sketch the proof of 
 Theorem 4.3.  In \ref{sec:thmA1} we explain the main technical result we use,  Corollary~\ref{blocks1}. Finally  in \ref{sec:thmA2} we prove  
 Theorem~\ref{thm:strong=unif}.

 \subsubsection{Split geometry} \label{sec:split}
 Let $S$ be a hyperbolizable surface and let $M$ be a manifold homeomorphic to $S \times \mathcal I$ where $\mathcal I \subset \mathbb R$ is an interval either finite or infinite.
To say that $M$ has 
split geometry means, roughly speaking, that it is made by gluing together a succession (finite or infinite) of so-called \emph{split blocks}. These are blocks each of which is homeomorphic to $S \times [0,1]$, split by annuli round Margulis tubes corresponding to short curves.

A \emph{split subsurface} $S^s$ of a hyperbolic surface $S$ is a
(possibly disconnected) proper 
subsurface with boundary,   whose components are all essential and non-annular, and whose complement in $S$ is a non-empty
family of
non-homotopic  annuli which are $k$-neighborhoods of non-peripheral  geodesics on $S$.  
  Moreover  $S^s$ is required to have bounded geometry, in the sense that there exists some universal $\epsilon_0>0$ such that 
 each boundary component of $S^s$ is of length $\epsilon_0$.
  
 A \emph{split block} $B^s \subset B = S \times [0,1]$
is a topological product $S^s \times [0,1]$ for some split subsurface $S^s$ of $S$,  with the qualification that  its upper and lower boundaries are only required to be   split sub-surfaces of $S^s$.  Split blocks are glued along their boundaries to build up model manifolds in the spirit of~\cite{minsky-elc1}. A \emph{split component} is a connected component of a split block, see \cite{mahan-split} following Definition 4.11. 

Suppose $\mathfrak M$ is a  model manifold obtained by gluing finitely or infinitely many split blocks along their boundaries.  Section 4.3 in~\cite{mahan-split} introduces a so-called \emph{graph metric}  $d_{graph}$ on  the universal cover  $\widetilde {\mathfrak M}$. (This metric is denoted $d_G$ in~\cite{mahan-split}; the notation here is used to distinguish it from the word metric on the Cayley graph.) Roughly speaking $d_{graph}$ is obtained by first  electrocuting the lifts of Margulis tubes in each split block, and then by electrocuting the lifts of connected components of each split block. A (Gromov) hyperbolic manifold is said to be of \emph{split geometry}, see~\cite{mahan-split} Definition 4.31, if each split component is quasi-convex (not necessarily uniformly) in the hyperbolic metric on $\widetilde {\mathfrak M}$ and if in addition, the hyperbolic convex hull of the universal cover of any split component has uniformly bounded diameter in the graph metric $d_{graph}$. This uniform bound is called the \emph{graph quasi-convexity constant} and plays a crucial role in the discussion.

Given  a singly or doubly degenerate hyperbolic manifold $M$ whose fundamental group is a surface group,  a large part of the work in~\cite{mahan-split}  is to construct  a model manifold $  {\mathfrak M}$ of split geometry whose universal cover  $\widetilde {\mathfrak M}$ is  bi-Lipschitz homeomorphic to the universal cover $\widetilde M$ of $M$. The model $\mathfrak M$ is made  by consistently gluing 
 finitely or infinitely many split blocks $B_i$  so that $B_{i-1}$ is glued to $B_{i}$ along their  common boundary split surface $S_i$.
The existence of the sequence of split level surfaces and split blocks exiting the end
is  a consequence of the Minsky model \cite{minsky-elc1} for a degenerate end.   The detailed construction is intricate and involves a careful selection of the split level surfaces using the Minsky hierarchy, see \cite{mahan-split}  especially \S 3 and \S 4 for details.

\begin{thm}[\cite{mahan-split} Theorem 4.32]  \label{thm:splitgeom} The hyperbolic manifold corresponding to any singly or doubly degenerate surface group without accidental parabolics is bi-Lipschitz homeomorphic to a model manifold with split geometry.
\end{thm}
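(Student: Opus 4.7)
The plan is to build the model $\mathfrak M$ directly from the Minsky bi-Lipschitz model of a degenerate end and then verify the split-geometry conditions by exploiting the combinatorics of the Minsky hierarchy. First I would recall that by \cite{minsky-elc1, minsky-elc2}, a degenerate end of a surface-group manifold $M$ without accidental parabolics admits a bi-Lipschitz model consisting of a product region $S \times [0,\infty)$ cut along solid-torus Margulis tubes corresponding to short geodesics, with tube radii and block geometries controlled by subsurface-projection distances in an associated hierarchy of tight geodesics. Any proof must take this as the starting point; the real content of Theorem~\ref{thm:splitgeom} is to reorganise this model into a concatenation of split blocks with the quasi-convexity property built in.

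The first constructive step is to extract a bi-infinite (resp.\ one-sided infinite) sequence of \emph{split level surfaces} $S_i \subset M$ exiting the end(s). I would produce these by choosing pleated surfaces realising a sequence of vertices of the hierarchy, then cutting along the $\epsilon_0$-Margulis tubes of the short curves in the induced hyperbolic metric to obtain non-annular essential subsurfaces whose boundary curves all have length exactly $\epsilon_0$. The indexing must be chosen so that the ``hierarchy complexity'' between consecutive $S_i$ is uniformly bounded (this is what the Minsky-hierarchy framework delivers), and so that the piece of $M$ between $S_i$ and $S_{i+1}$ is a split block $B_i$ in the sense of~\ref{sec:split}. The existence and detailed combinatorial properties of such a selection are precisely the content of Sections~3 and~4 of \cite{mahan-split}, and I would import that construction wholesale.

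Given the sequence $\{B_i\}$, define $\mathfrak M$ by gluing $B_{i-1}$ to $B_i$ along $S_i$. The bi-Lipschitz homeomorphism $\mathfrak M \to M$ is then assembled block by block from the Minsky model map, with bi-Lipschitz constant depending only on the Margulis constant and the topology of $S$. What still has to be proved is that $\mathfrak M$ has \emph{split geometry}: every split component lifts to a quasi-convex subset of $\widetilde M$, and, more crucially, the hyperbolic convex hull of the universal cover of any split component has uniformly bounded diameter in the graph metric $d_{graph}$.

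The main obstacle is establishing this last uniform bound, the graph quasi-convexity constant. A split component can be topologically large and geometrically wild, since no lower bound on injectivity radius is available, so a direct bounded-geometry argument is not available. I would handle this by using the hierarchy once more: a geodesic in $\widetilde M$ with endpoints in a lift of a split component $Y$ can be nearest-point projected onto the lifts of the Margulis tubes of $\partial Y$, and the hierarchy provides uniform control (in terms of the topological type of $Y$, hence ultimately of $S$) on how far the geodesic can wander into neighbouring blocks before returning. Combined with the observation that each split component has bounded topological type as a subsurface of $S$, and with the electrocution-based estimates of Lemma~\ref{ea-genl}, this yields a bound on the $d_{graph}$-diameter of the hyperbolic convex hull depending only on $S$ and $\epsilon_0$. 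This is the heart of the technical work in \cite{mahan-split}, and my proof would follow that route.
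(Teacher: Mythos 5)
Your proposal follows essentially the same route as the paper, which states this result as an import from \cite{mahan-split} (Theorem 4.32 there) and in \S\ref{sec:split} sketches exactly the ingredients you describe: split level surfaces and split blocks extracted from the Minsky model via a careful selection in the hierarchy (\cite{mahan-split} \S 3--4), with the uniform graph quasi-convexity of split components (\cite{mahan-split} Proposition 4.23) as the key technical point, its constant being combinatorial and depending only on the topological type of $S$. Since both you and the paper defer the hard hierarchy work to \cite{mahan-split}, your outline is an accurate account of the argument actually used.
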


The part of this result which asserts \emph{uniform} graph quasi-convexity of the blocks is \cite{mahan-split} Proposition 4.23.  We note the point, key for our purposes here, that the  graph quasi-convexity constant is a combinatorial quantity which depends only on the topological convexity  of the surface defining the end, and is thus also uniform across all degenerate ends of any hyperbolic manifold defined by the same topological surface $S$.

\subsubsection{Rough sketch of Theorem~\ref{splitct}}\label{sec:thm7.1}
Let $S = \HHH^2/\G$ be a compact  hyperbolic surface  and let $\rho \co \G \to G$ be a type preserving isomorphism to  a singly or doubly degenerate surface group $G$.
 The criterion used in~\cite{mahan-split} to  prove the existence of a $CT$-map for $\rho$ is that given in Theorem~\ref{crit1}, but it will be convenient for our purposes to use the  alternative formulation of Corollary~\ref{unifcrit1aa}.  In view of Theorem~\ref{thm:splitgeom},  we may work either with $M = \HHH^3/G$,  or with a quasi-isometric model manifold of split geometry $\mathfrak M$. Lifting to universal covers, we obtain an identification of the universal cover 
 ${\widetilde{\mathfrak M}}$ of $\mathfrak M$ with $\HHH^3$, and in particular we can identify a basepoint $O \in \HHH^3$ with a point, also denoted $O$, in  ${\widetilde{\mathfrak M}}$.

 Here is the statement we need:
 \begin{prop}  \label{uepp}
Let $G$ be  a totally degenerate surface group corresponding to a strictly type preserving representation $\rho\co \pi_1(S) \to G$ where $S$ is a closed surface as above,  and let $\mathfrak M$ be a model manifold of split geometry    corresponding to  $M = \HHH^3/G$. Let $B_0 \subset S_0 \times [0,1]$ be a fixed base block and let $\phi \co  S \to \mathfrak M$ be the embedding which identifies  $S$ with $S_0 \times \{0\} \subset B_0$.
 Fix a  basepoint  $O \in \widetilde S $  in the universal cover  $ {\widetilde{S} }= \HHH^2 $. Denote by $\til \phi$  the lift of $\phi$ such that $\til \phi(O)$ is the basepoint  $O \in \HHH^3 = {\widetilde{\mathfrak M}}$.
 
Then for any $L\in \mathbb N$, there exists    $f(L) \in \mathbb N$, such that whenever   $\lambda$ is a geodesic segment in $(\widetilde S, d_S)$ lying outside an $f(L)$-ball
around ${O}\in{\widetilde{S}}$ (where $d_S$ is the lifted hyperbolic metric on $\til S$), the hyperbolic geodesic $[\til\phi (\lambda)]$ in $\widetilde {\mathfrak M} $  joining the endpoints of $\til\phi(\lambda)$   lies 
outside the $L$-ball around 
${O}\in \widetilde{{\mathfrak M}}$.
 \end{prop}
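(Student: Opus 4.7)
The plan is to apply Lemma~\ref{lemma6.12} (the restated form of \cite{mahan-split} Corollary~6.13) to the split geometry model of $M = \HHH^3/G$ produced by Theorem~\ref{thm:splitgeom}. Fix the identification of $\widetilde{\mathfrak M}$ with $\HHH^3$ coming from that bi-Lipschitz equivalence; since a bi-Lipschitz homeomorphism sends geodesics to quasi-geodesics lying within bounded Hausdorff distance of the true geodesics joining the same pair of endpoints, the statement about $[\til\phi(\lambda)]$ in $\widetilde{\mathfrak M}$ is equivalent to the corresponding statement about the $\HHH^3$-geodesic joining the endpoints of $i(j_\Gamma(\lambda))$. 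The latter is the conclusion of \cite{mahan-split} Corollary~6.13 applied to the end of $M$ on the relevant side of $\phi(S_0)$ (and its mirror on the other side in the doubly degenerate case).

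To convey a flavour of why Lemma~\ref{lemma6.12} holds, I would sketch the \emph{ladder-plus-retraction} strategy of \cite{mahan-split}. Given a geodesic $\lambda$ in $\widetilde{S_0}$, one builds a \emph{ladder} $\mathcal L_\lambda \subset \widetilde{\mathfrak M}$ by flowing $\lambda$ through the sequence of split blocks $\til B_i$ using the bi-Lipschitz identifications between adjacent split level surfaces, routed carefully across the lifts of Margulis tubes and between split components. The central technical assertion is that $\mathcal L_\lambda$ is uniformly quasi-convex in the graph metric $d_{graph}$ on $\widetilde{\mathfrak M}$; this rests on the uniform graph quasi-convexity of split components built into Theorem~\ref{thm:splitgeom}, together with a block-by-block analysis. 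Equivalently, one obtains a coarsely Lipschitz retraction $\Pi \co (\widetilde{\mathfrak M}, d_{graph}) \to (\mathcal L_\lambda, d_{graph})$. Combining this quasi-convexity with Lemma~\ref{ea-genl}, applied to the collection $\HH$ consisting of lifts of Margulis tubes and split components, the hyperbolic geodesic $[\til\phi(\lambda)]$ lies within a uniformly bounded $d_{graph}$-neighbourhood of $\mathcal L_\lambda$.

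Two estimates then combine to give the escape. First, because $O$ lies in the base block $\til B_0$ and the level-$0$ slice of the ladder is $\lambda$ itself, the retraction $\Pi$ maps $O$ to a point of $\lambda$ and yields $d_{graph}(O, \mathcal L_\lambda) \geq g\bigl(d_{\widetilde S}(O, \lambda)\bigr)$ for some function $g$ with $g(t) \to \infty$ as $t \to \infty$. Second, the tracking statement above gives $d_{graph}\bigl(O, [\til\phi(\lambda)]\bigr) \geq d_{graph}(O, \mathcal L_\lambda) - C$. Since electrocution never increases distances, $d_{\HHH} \geq d_{graph}$, and hence $d_{\HHH}\bigl(O, [\til\phi(\lambda)]\bigr) \geq g\bigl(d_{\widetilde S}(O, \lambda)\bigr) - C$. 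Choosing $f(L)$ large enough that $g(f(L)) > L + C$ finishes the job. The hardest step is the uniform quasi-convexity of $\mathcal L_\lambda$ in $d_{graph}$: this is delicate precisely because the hyperbolic geometry of the model is unbounded and the block-by-block flow need not be uniformly Lipschitz in $d_{\HHH}$, which is why one must work with the graph metric and the relative hyperbolicity machinery reviewed in Section~\ref{sec:relhyp}.
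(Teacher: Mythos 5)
Your top-level reduction coincides with the paper's: Proposition~\ref{uepp} is deduced by quoting Lemma~\ref{lemma6.12} and then upgrading its conclusion to a statement about the genuine hyperbolic geodesic via Lemma~\ref{ea-genl}. Note, however, that the conclusion of Lemma~\ref{lemma6.12} is \emph{not} directly about $[\til\phi(\lambda)]$ but about an electro-ambient quasi-geodesic in $(\widetilde{\mathfrak M}, d_{CH})$, where $d_{CH}$ electrocutes the hyperbolic \emph{convex hulls} of split components; the replacement by $[\til\phi(\lambda)]$ is exactly where Lemma~\ref{ea-genl} enters, and it applies because convex hulls are genuinely convex. Applying Lemma~\ref{ea-genl} instead to the raw collection of split components, as you do, is not legitimate: split geometry only requires each split component to be quasi-convex, \emph{not uniformly}, so the hypotheses of Lemma~\ref{ea-genl} fail for that collection. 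This is why the paper works with $d_{CH}$ rather than $d_{graph}$ at this stage.

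The genuine gap is in your quantitative endgame. You claim $d_{graph}(O,\mathcal L_\lambda)\geq g\bigl(d_{\widetilde S}(O,\lambda)\bigr)$ for a proper function $g$ and then pass to the hyperbolic metric via $d_{\HHH}\geq d_{graph}$. The latter inequality is fine, but no such $g$ exists: the graph metric electrocutes lifts of split components, and a lift of a split component meets the base level surface $\widetilde{S_0}$ in a set of \emph{infinite} hyperbolic diameter (the lift of a $\pi_1$-injective subsurface). Hence $O$ and $\lambda$ can lie at bounded $d_{graph}$-distance while $d_{\widetilde S}(O,\lambda)$ is arbitrarily large, and the coarse retraction onto the ladder then yields no lower bound at all. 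This is precisely why the graph-metric quasi-convexity of $\mathcal L_\lambda$ is only the \emph{first} step in \cite{mahan-split}: one must then carry out the controlled sequence of alterations through successive metrics described in Section~\ref{sec:unbounded}, analysing the hyperbolic geodesic pieces inside each electrocuted region, to convert graph-metric control into the hyperbolic escape estimate. That analysis cannot be compressed into the single inequality $d_{\HHH}\geq d_{graph}$. If you intend Lemma~\ref{lemma6.12} purely as a black box, the proposition does follow as in the paper; but the sketch you offer as its justification would not prove it.
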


Proposition~\ref{uepp} follows from~\cite{mahan-split} Lemma 6.12, restated as Lemma~\ref{lemma6.12} below. Theorem~\ref{splitct} follows immediately from Proposition~\ref{uepp} on applying Corollary~\ref{unifcrit1aa}. The condition that $G_\infty$ be totally degenerate is introduced simply to avoid the annoyance of having to deal with geometrically finite ends.

  \begin{rmk} {\rm We refer to the discussion in Section~\ref{sec:CTMaps} for the equivalence of the condition as stated here with the condition on geodesics in the Cayley  graph $\Gr \G$.}
 \end{rmk}

 \begin{rmk} {\rm Strictly speaking,  since $S_0$ is $S$ with some Margulis tubes deleted, it cannot be identified with $S$. For a precise statement we need to work instead with \emph{welded split blocks} in which the  ends of the Margulis tubes deleted in the split blocks $B_i$  are reglued. Gluing the welded split blocks along their boundaries gives a \emph{welded model manifold}   $ {\mathfrak M}_{wel}$ homeomorphic to $S \times \mathbb R$ or $S \times [0,\infty)$ according as $M$ is doubly or singly degenerate,   see \cite{mahan-split} \S 4.3 for details. For simplicity, we will ignore this distinction in the discussion below.}
 \end{rmk}

  The essence of  Proposition~\ref{uepp}  is contained in Lemma~\ref{lemma6.12} below, whose proof
 occupies \S 5 and \S 6 of~\cite{mahan-split}.  As in the bounded geometry case, the first step is to use $\lambda$ (or more precisely, the image $\til \phi(\lambda)$ of $\lambda$ in $\widetilde {\mathfrak M}$) to construct a `ladder' $\mathcal L_{\lambda}$ by `flowing up' through the blocks of $\widetilde {\mathfrak M}$.  By constructing a coarse retraction onto  $\mathcal L_{\lambda}$, it  is shown (\cite{mahan-split} Corollary 5.8) to be quasi-convex in the graph metric $d_{graph}$. The constants here are independent not only of $\lambda$, but also of the particular model $\mathfrak M$,   depending in fact only on the topological type of the surface $S$.  Starting from $\lambda$, this allows one to construct  an `admissible' path   joining the endpoints of $\lambda$ which follows the levels $\mathcal L_{\lambda}$ and which, after a controlled sequence of alterations using a succession of different metrics, is ultimately modified into an electro-ambient quasi-geodesic   in $(\widetilde {\mathfrak M}, d_{CH})$ joining the endpoints of $\lambda$. 
 Here  $d_{CH}$ is the metric on $\widetilde  {\mathfrak M}$ obtained by electrocuting the hyperbolic convex hulls $CH(\til K)$ of the extended split components $\til K$ of $\widetilde  {\mathfrak M}$.  (Recall that an electro-ambient quasi-geodesic  in $(\widetilde {\mathfrak M}, d_{CH})$  is an electric quasi-geodesic  whose intersection with each electrocuted component    is geodesic in the hyperbolic (or model) metric, see Section~\ref{sec:relhyp}.)   
 This finally leads to the following key statement, which we once again formulate in the modified form appropriate to Corollary~\ref{unifcrit1aa}:
 
 \begin{lemma} [\cite{mahan-split} Lemma 6.12] \label{lemma6.12}
Let $ {\mathfrak M}$ be a model manifold of split geometry  and without cusps for a  fixed compact hyperbolizable surface
 $S$. Let $B_0 \subset S_0 \times [0,1]$ be a fixed base block and let $\phi \co S \to {\mathfrak M}$ be an embedding   which identifies  $S$ with $S_0 \times \{0\} \subset B_0$.
 Let $o \in S_0 \subset  {\mathfrak M}$ be a  basepoint and fix a lift $ O $ in the universal cover $ \widetilde S  = \HHH^2 $. Let $\phi $ be the lift $\til \phi \co \til S \to  \til  {\mathfrak M} =  \HHH^3$ so that $\til \phi(O) = O$ as in  Proposition~\ref{uepp}. 
  
Then for any $L>0$, there exists $f (L) >0$, such that for any geodesic segment   $\lambda$ in $(\widetilde S, d_S)$ lying outside $B_{\HHH^2}(O,f (L)) \subset {\widetilde{S}}$, there is an electro-ambient quasi-geodesic  $\gamma$ in $(\widetilde {\mathfrak M}, d_{CH})$  joining the endpoints of $\til \phi(\lambda)$  which lies 
outside   $B_{\HHH^3}(O, L) \subset  \widetilde{{\mathfrak M}}$.
 \end{lemma}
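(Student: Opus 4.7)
The plan is to follow the strategy sketched in Subsection~\ref{sec:thm7.1} and carried out in \S 5--6 of~\cite{mahan-split}: build a ``ladder'' $\LL_\lambda \subset \widetilde{\mathfrak M}$ over $\til\phi(\lambda)$, show that it is quasi-convex in the graph metric with constants depending only on $S$, and convert a tracking electric quasi-geodesic into an electro-ambient quasi-geodesic in $(\widetilde{\mathfrak M}, d_{CH})$.

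First I would construct $\LL_\lambda$ by flowing $\til\phi(\lambda)\subset \widetilde{S_0}$ upward (and, in the doubly degenerate case, downward) through the lifted split level surfaces $\widetilde{S_i}$, using the uniformly bi-Lipschitz identifications between adjacent split levels supplied by the split block structure of $\widetilde{\mathfrak M}$; on each level $\LL_\lambda\cap \widetilde{S_i}$ is uniformly close to a hyperbolic geodesic in $\widetilde{S_i}$ joining the flowed endpoints. The first technical step is to produce a coarsely Lipschitz retraction $\Pi_\lambda \co \widetilde{\mathfrak M} \to \LL_\lambda$ with constants depending only on the topology of $S$: within each level I would use nearest-point projection onto the slice of $\LL_\lambda$ (Lipschitz because each $\widetilde{S_i}$ is Gromov hyperbolic) and interpolate between adjacent levels using the split block quasi-isometries. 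Combined with Lemma~\ref{lemma:weakhyp} applied to the uniformly separated, uniformly quasi-convex, mutually cobounded families of lifted Margulis tubes and split components (Theorem~\ref{thm:splitgeom} and \cite{mahan-split} Proposition~4.23), this retraction shows that $(\widetilde{\mathfrak M}, d_{graph})$ is hyperbolic and that $\LL_\lambda$ is uniformly quasi-convex in it.

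Next I would take an electric $d_{graph}$-quasi-geodesic between the endpoints of $\til\phi(\lambda)$; by quasi-convexity it tracks $\LL_\lambda$ within bounded $d_{graph}$-distance. To pass to $(\widetilde{\mathfrak M}, d_{CH})$, I modify this path in stages, replacing each maximal subsegment that enters a lifted Margulis tube, and then each maximal subsegment that enters a convex hull $CH(\til K)$ of an extended split component $\til K$, by the $\widetilde{\mathfrak M}$-geodesic with the same entry and exit points. Two successive applications of Lemma~\ref{ea-genl} (first to Margulis tubes inside each split block, then to convex hulls of split components in the ambient model) guarantee that the resulting path $\gamma$ is an electro-ambient quasi-geodesic in $(\widetilde{\mathfrak M}, d_{CH})$.

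Finally, to extract the escape statement, I would observe that $\til\phi \co (\widetilde{S},d_S) \hookrightarrow (\widetilde{\mathfrak M},d_{graph})$ is a coarsely Lipschitz proper embedding (since $\til\phi$ identifies $\widetilde S$ with a single split level in $\widetilde{\mathfrak M}$ bi-Lipschitz onto its image) and that $d_{\HHH^3}$ coarsely dominates $d_{CH}$, which in turn coarsely dominates $d_{graph}$. Consequently only finitely many Margulis tubes and convex hulls $CH(\til K)$ meet $B_{\HHH^3}(O,L)$, and the flow ladder of any $\lambda$ lying outside a sufficiently large $d_S$-ball about $O$ must avoid all of them; since $\gamma$ tracks $\LL_\lambda$ in $d_{CH}$, this yields the required $f(L)$. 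The hard part I expect is the second step: verifying that the modifications converting the $d_{graph}$-quasi-geodesic into an electro-ambient $d_{CH}$-quasi-geodesic really do preserve the escape property, which hinges crucially on the \emph{uniform} graph quasi-convexity of split components (\cite{mahan-split} Proposition~4.23), so that the hyperbolic detours across Margulis tubes and split components cannot drag the path back toward $O$.
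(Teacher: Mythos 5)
Your plan follows essentially the same route as the paper, which does not prove this lemma in situ but defers to \S 5--6 of \cite{mahan-split} and sketches precisely the argument you describe: the ladder $\mathcal L_\lambda$ built by flowing $\til\phi(\lambda)$ through the split levels, the coarse Lipschitz retraction giving quasi-convexity in $d_{graph}$ with constants depending only on the topology of $S$, and the staged modification through successive electric metrics to an electro-ambient quasi-geodesic in $(\widetilde {\mathfrak M}, d_{CH})$. You also correctly identify the crux, namely that the uniform graph quasi-convexity of the convex hulls of split components is what prevents the hyperbolic detours from re-approaching the basepoint.
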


To get from Lemma~\ref{lemma6.12} to the statement of  Proposition~\ref{uepp}, we have now to only to show that  one can replace electro-ambient quasi-geodesic  $\gamma$ in $(\widetilde {\mathfrak M}, d_{CH})$  by the   hyperbolic geodesic $[i(\lambda)]$  joining the endpoints of $\lambda$.  This can be done in virtue of Lemma~\ref{ea-genl}, see also \cite{mahan-split} Lemma 2.5 and the short argument  in the proof of \cite{mahan-split} Theorem 7.1.

 \subsubsection{Proof of Theorem~\ref{thm:strong=unif}: preliminaries}  \label{sec:thmA1}
 
 We  want to adapt the above discussion to the situation of Theorem~\ref{thm:strong=unif}, namely a sequence of geometrically finite purely loxodromic groups $G_n$ converging strongly to a degenerate group $G_{\infty}$.

    First, suppose that we have a single  type preserving discrete faithful representation $\rho \co \Gamma \to G$ where $ \Gamma$ is geometrically finite purely loxodromic group and $G$ is a totally degenerate   Kleinian group with incompressible ends.  As discussed in \S 4.7 of \cite{mahan-split}, Theorem~\ref{thm:splitgeom} extends to any simply degenerate end of any hyperbolic manifold with incompressible ends.  Thus one can easily modify Lemma~\ref{lemma6.12} to apply in this more general situation.

 The essence of the proof  of Theorem~\ref{thm:strong=unif}   is therefore to understand the dependence of the constants involved on the manifold $M = \HHH^3/G$. The manifold, and hence the   ladder $\mathcal L_{\lambda}$ (really one ladder for each end of $M$), may not have bounded geometry, since its construction depends on the whole of each infinite end  of $M$. However as already noted, the Lipschitz constant for the coarse projection to $\mathcal L_{\lambda}$ in the graph metric $d_{graph}$ depends only on the topological type of the surfaces defining the ends and is thus uniform over any approximating sequence $\rho_n$. 
    
  Next, we need to look into the effect on the constants involved of the modifications needed to get from the   $d_{graph}$-electro-ambient geodesic  joining the endpoints of $\til \phi(\lambda)$ as in~\cite{mahan-split} Section 6.1 (called $\beta_e$ in~\cite{mahan-split})  to a 
   $d_{CH}$-electro-ambient geodesic with the same endpoints as in
   Lemma~\ref{lemma6.12} and  hence to a hyperbolic geodesic as in Proposition~\ref{uepp}.  
Suppose we have a bounded segment $\gamma$ of  such an electro-ambient geodesic which passes through a given finite collection of split blocks. Close examination of the proof in \cite{mahan-split} shows that the modifications made in the course of replacing $\gamma$ with the corresponding segment $\hat \gamma$ of the required hyperbolic geodesic, depend only  on
the geometry of the hyperbolic convex hulls of the split components traversed by $\gamma$.  The number $N(\gamma)$ of convex hulls traversed  is bounded uniformly in terms of the graph quasi-convexity constant. Thus the modifications made to $\gamma$, which control how much nearer $\hat \gamma$  approaches the  basepoint than $\gamma$, depend only the geometry of $N(\gamma)$ blocks, where $N(\gamma)$ depends only on the topology of the ends of $M$.

 This leads to  an equivalent reformulation of Lemma~\ref{lemma6.12}, which is essentially the same as~\cite{mahan-split}  Corollary 6.13 in the context of a general  hyperbolic manifold $M$ without cusps. We begin with some more notation. Suppose $M$ has  ends $E^1, \cdots , E^r$, and  that each $E^k$ is homeomorphic to $S^k \times [0,\infty)$ for some closed hyperbolic surface $S^k$, and such that each end is simply degenerate.    Let $\mathcal  K \hookrightarrow  M$ be a Scott core cutting off  
the ends $E^k$, so that the boundary components of $\mathcal  K$ are the surfaces $S^{k} \times \{0\}, i=1, \ldots , {r}$.   
 Theorem~\ref{splitct} asserts that  each end $E= E^k$  of $M$ has split geometry and has a model   made by consistently gluing 
  split blocks $B_i = B^k_i, i \in \mathbb N$,  so that $B_{i-1}$ is glued to $B_{i}$ along their  common boundary split surface $S_i = S^k_i$.  For $q \in \mathbb N$, let $\BB  (q) = \mathcal  K \cup  \bigcup_{k=0}^r\bigcup_{i=0}^qB^k_i$  be the manifold formed by gluing the core $\mathcal K$ to the first $q$ blocks in each end  as above. Replacing  models by actual ends, we may assume that $\BB  (q)$ is quasi-isometrically embedded in $M$.
  
We also change our formulation so as to be in accordance with the criterion in terms of $\Gr \G$, see the explanation in Section~\ref{sec:CTMaps}.   Given an isomorphism $\rho \co \G \to G$, we have an embedding   $j  \co \Gr \G \to \HHH^3,   j(\gamma) = \rho(\gamma) \cdot O$ 
of the Cayley graph  of $\G$ into $\HHH^3$.
Recall also that if $\lambda$ is any $d_{\G}$-geodesic segment in $\Gr \G$  with endpoints $\gamma, \gamma' \in  \G$, then  we write $j(\lambda)]$ for  the $\HHH^3$-geodesic $[j(\gamma),j( \gamma')]$.

\begin{rmk} {\rm We also have the map $i \co j_{\G}(\Gr \G) \to \HHH^3  $  defined by $ i  j_{\G}(\gamma) = j(\gamma) $, whose extension to $\Lambda_{\G}$ is the $CT$-map $\hat i$. Note that  $i$  is morally the same as the embedding $\til \phi \co \til S \to \til {\mathfrak M}$ of Proposition~\ref{uepp}, and as long as $S$ is closed, the map $j_{\G}$ is a quasi-isometry so that $i$ is also morally equivalent to $j$.
} \end{rmk}

 \begin{prop} \label{blocks} Let $\G$ be a geometrically finite convex cocompact Kleinian group and let $\rho \co \G \to G$ be a strictly type preserving isomorphism.  Suppose that 
  $ M = \HHH^3/G$ is a hyperbolic $3$-manifold without cusps,  with Scott core $\mathcal  K$ and incompressible ends $E^1, \cdots , E^r$ as above.  Then there  exists   $D \in \mathbb N$, depending only on the topology of the ends of $M$,   with the following property. 
Suppose given $ q \in \mathbb N$ and that the submanifold $\BB  (q)$ is defined as above.   
Then for all $L>0$ 
 there exists   $f(L) >0$, depending only on the geometry of the submanifold $\BB  (q+D)$, 
 such that if $\lambda$ is any $d_{\G}$-geodesic segment in $\Gr \G$ which lies outside $B_{\G}(1, f(L))$, then $[j(\lambda)] \cap \widetilde { \BB(q)}$ lies outside $B(O,L)$, 
 where $ \widetilde { \BB(q)}$ is the lift to $\til M$ of $ { \BB(q)}$.
 \end{prop}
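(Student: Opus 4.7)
The plan is to combine the generalization of Lemma~\ref{lemma6.12} alluded to in \S\ref{sec:unbounded} --- obtained by running the split-geometry argument of~\cite{mahan-split} separately on each incompressible end of $M$ --- with Lemma~\ref{ea-genl}, while carefully tracking which split components of $\widetilde M$ can affect the hyperbolic geodesic inside $\widetilde{\BB(q)}$. First, for any target radius $L' > 0$, I would invoke this extended version of Lemma~\ref{lemma6.12} to produce a $d_{\G}$-threshold $N(L')$ such that whenever $\lambda$ is a $d_{\G}$-geodesic segment lying outside $B_{\G}(1, N(L'))$, there is a $d_{CH}$-electro-ambient quasi-geodesic $\gamma$ joining the endpoints of $j(\lambda)$ and avoiding $B_{\HHH}(O, L')$. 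As stressed in the discussion preceding the Proposition, the ladder, the coarse retraction onto it, and the cascade of electrocuted metrics used to produce $\gamma$ all have constants depending only on the topological types of the end-defining surfaces $S^k$, so $N(\cdot)$ depends only on the topology of the ends of $M$, not on any global geometric feature.

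Next, by Lemma~\ref{ea-genl} the hyperbolic geodesic $[j(\lambda)]$ lies within uniformly bounded Hausdorff distance of $\gamma$; more concretely, $[j(\lambda)] \cap \widetilde{\BB(q)}$ is obtained from the corresponding subsegment of $\gamma$ by straightening out those portions of $\gamma$ lying in the hyperbolic convex hulls $CH(\widetilde K)$ of extended split components $\widetilde K$. The uniform graph quasi-convexity part of Theorem~\ref{thm:splitgeom} ensures that each such $CH(\widetilde K)$ has bounded $d_{graph}$-diameter, with a bound that depends only on the topology of the end-defining surface. Hence there exists an integer $D$, depending only on the topology of the ends of $M$, such that every split component whose hyperbolic convex hull can meet $\widetilde{\BB(q)}$ is contained in $\widetilde{\BB(q+D)}$.

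Because the straightening operation modifies $\gamma$ only inside these finitely many convex hulls, both the Hausdorff displacement between $\gamma$ and $[j(\lambda)] \cap \widetilde{\BB(q)}$, and the quasi-isometry constants relating $d_{graph}$, $d_{CH}$ and $d_{\HHH}$ in this region, are controlled purely by the geometry of $\BB(q+D)$. Given $L$, I would then choose $L' = L'(L, \BB(q+D))$ large enough that the straightened image of any $d_{CH}$-electro-ambient geodesic avoiding $B_{\HHH}(O, L')$ still avoids $B_{\HHH}(O, L)$ inside $\widetilde{\BB(q)}$, and set $f(L) := N(L')$. By construction this $f$ satisfies the conclusion of the proposition.

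The main obstacle is the control claimed in the second paragraph: one must verify that, as one traverses the cascade of metric changes in \S 6 of~\cite{mahan-split} (first the graph metric $d_{graph}$, then the convex-hull-electrocuted metric $d_{CH}$, and finally the ambient hyperbolic metric), no step introduces a hidden dependence on the geometry of blocks beyond $\BB(q+D)$. This requires a careful audit of the argument in \cite{mahan-split} to ensure that every localization estimate used to pass between successive metrics is itself genuinely local to a bounded collection of split components, whose collective geometry is encoded in $\BB(q+D)$; all other uniformity statements then follow from the topologically-determined quasi-convexity constants recorded above.
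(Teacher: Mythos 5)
Your overall strategy---run the ladder/retraction argument of~\cite{mahan-split}, use the graph quasi-convexity constant to produce a topologically determined buffer $D$, and then localize the electro-ambient-to-hyperbolic straightening to $\BB(q+D)$---is the same as the one the paper follows, and your identification of $D$ via the bounded $d_{graph}$-diameter of the convex hulls of split components is exactly right. There is, however, a genuine gap in your first step. You assert that the threshold $N(L')$ supplied by the extended Lemma~\ref{lemma6.12} ``depends only on the topology of the ends of $M$, not on any global geometric feature.'' Only the \emph{graph-metric} constants---the quasi-convexity of the ladder $\mathcal L_\lambda$ and the Lipschitz constant of the coarse retraction onto it---are purely topological. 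The conclusion of Lemma~\ref{lemma6.12} concerns the \emph{hyperbolic} ball $B_{\HHH^3}(O,L')$, and converting graph-metric separation from $O$ into hyperbolic separation necessarily uses the hyperbolic geometry of the convex hulls of the split components near the basepoint (their diameters, their separation, and how the straightened segments inside them behave). This is exactly where the dependence on the model manifold enters, and it is why Lemma~\ref{lemma6.12} is stated for a fixed model $\mathfrak M$ rather than uniformly over all models with the same fiber.

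If your claim were correct, Proposition~\ref{blocks} would be superfluous: the same threshold would work verbatim for every $G_n$ in the approximating sequence (they all have the same end topology), and the entire apparatus of geometric convergence of $\BB(q+D)$ in Section~\ref{sec:thmA2} could be discarded. The content of the proposition is that the geometric dependence of the threshold can be \emph{localized} to $\BB(q+D)$, not eliminated; as written, you derive the localization from a stronger global uniformity that is false, so setting $f(L):=N(L')$ does not produce a function depending only on the geometry of $\BB(q+D)$. The repair is to reverse the logic: the topological constant $D$ from your second paragraph confines every convex hull that can influence $[j(\lambda)]\cap\widetilde{\BB(q)}$ to $\widetilde{\BB(q+D)}$, and one must then check that \emph{every} stage of the passage from the $d_{graph}$-control of the ladder to the hyperbolic conclusion---including the comparison between $d_{\G}$, $d_{graph}$, $d_{CH}$ and $d_{\HHH}$ near $O$, not merely the final straightening---involves only those hulls, letting $f(L)$ absorb their geometry. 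Your closing ``audit'' paragraph correctly flags this issue for the straightening step, but by declaring step~1 topological you have exempted from the audit precisely the place where the geometric dependence first appears.
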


Thus the proposition asserts that, independent of the geometry of $M$ outside $ \BB(q+D)$,   we can control 
$[j(\lambda)]$ inside $ \widetilde{ \BB(q)}$ with constants which depend  only on the first $q+D$ blocks in each end, where $D$ is a universal constant which depends only on the topological types of the ends. 
 
We immediately deduce the following Corollary, which will be used in the proof of Theorem~\ref{thm:strong=unif}.

\begin{cor} \label{blocks1} Let $M, \mathcal K,  D$ be as in  Proposition~\ref{blocks}. Let $q \in \mathbb N$. Then  there exists   $f \co \mathbb N \to \mathbb N$ with the following property.
  Suppose that $V$ is any hyperbolic manifold  such that  there is a bi-Lipschitz embedding $\beta \co \BB(q+D) \to V$, and such  that the ends of  $V \setminus \beta(\BB(q+D)) $  have split geometry and correspond bijectively and homeomorphically to the ends of $M \setminus \BB(q+D)$.   Let  $\tilde \beta \co \widetilde {\mathcal K} \to \widetilde {V}$  be the lift of $\beta$  to the universal covers.  Suppose that $L>0$ and that   
 $\lambda$ is any $d_{\G}$-geodesic segment in $\Gr \G$ which lies outside $B_{\G}(1, f(L))$.   Then $[\til  \beta(j(\lambda))] \cap \til  \beta(\til  \BB(q))$ lies outside $B(\beta(O), L) \subset \widetilde V$, where $[\tilde \beta(j(\lambda))]$ denotes the geodesic whose endpoints are the images under $\tilde \beta \circ j$ of the endpoints  of $\lambda$. \end{cor}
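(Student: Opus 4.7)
The strategy is to reduce directly to Proposition~\ref{blocks} applied to $V$ itself, using the hypothesis that the structure of $V$ is essentially determined by $M$ on the piece $\beta(\BB(q+D))$ and has split geometry beyond it. What needs checking is (i) that $V$ satisfies the hypotheses of Proposition~\ref{blocks}, and (ii) that the function $f(L)$ produced for $V$ can be chosen uniformly across all admissible $V$.

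First, I would verify that $V$ fits the setup of Proposition~\ref{blocks}. Because $\beta \co \BB(q+D) \to V$ is a bi-Lipschitz embedding (and in particular a homotopy equivalence onto its image) and the complementary ends of $V$ are in bijective homeomorphic correspondence with those of $M \setminus \BB(q+D)$, the image $\beta(\mathcal K)$ is a Scott core for $V$, the ends of $V$ are incompressible with split geometry, and the split block decomposition transfers: the first $q+D$ blocks of each end of $V$ are exactly $\beta(B^k_i)$ for $i=0,\ldots,q+D$, so that $\BB_V(q+D) = \beta(\BB(q+D))$. The induced isomorphism $\beta_* \co \pi_1(\BB(q+D)) \to \pi_1(V)$ allows us to promote $\rho$ to a strictly type preserving representation $\rho_V = \beta_*\circ\rho \co \G \to \pi_1(V)$, and gives an embedding $j_V \co \Gr \G \to \widetilde V$ based at $\tilde\beta(O)$. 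By equivariance of lifts, $\tilde \beta \circ j = j_V$ on vertices, so $[\tilde\beta(j(\lambda))]$ coincides with the hyperbolic geodesic joining the endpoints of $j_V(\lambda)$ in $\widetilde V$.

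Second, I would apply Proposition~\ref{blocks} to $V$ with the same integer $q$. The universal constant $D$ depends only on the topological type of the ends, and the ends of $V$ are homeomorphic to those of $M$ by hypothesis, so the same $D$ works. This yields, for each $L>0$, a function value $f_V(L)$ controlling $d_\G$-geodesics whose image geodesics pass near $\tilde\beta(O)$ inside $\widetilde{\BB_V(q)} = \tilde\beta(\widetilde{\BB(q)})$. By Proposition~\ref{blocks}, $f_V(L)$ depends only on the geometry of $\BB_V(q+D) = \beta(\BB(q+D))$. Since $\beta$ is $K$-bi-Lipschitz for a fixed constant $K$, the geometry of $\beta(\BB(q+D))$ is a controlled distortion of that of the fixed piece $\BB(q+D) \subset M$; hence a single function $f$, depending only on $L$, $K$, and $\BB(q+D)$, dominates $f_V$ uniformly over all admissible $V$.

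The main obstacle is purely bookkeeping: one must keep straight that $\tilde\beta$ is only defined on $\widetilde{\BB(q+D)}$ and that endpoints of $j(\lambda)$ lying outside this set are transported to $\widetilde V$ via the $\rho_V$-action on $\tilde\beta(O)$, not via a literal application of $\tilde\beta$. Once this identification $\tilde\beta \circ j = j_V$ on group elements is made, the conclusion of Proposition~\ref{blocks} applied to $V$ is exactly the conclusion of the corollary. In the intended use within the proof of Theorem~\ref{thm:strong=unif}, $\beta = \psi_n$ arises from strong geometric convergence (Lemma~\ref{cores} and Corollary~\ref{cores1}), so the bi-Lipschitz constants $K_n \to 1$ and no issue of uniformity in $K$ arises.
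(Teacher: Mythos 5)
Your proposal is correct and takes essentially the same route as the paper, which deduces the corollary immediately from Proposition~\ref{blocks} precisely because the function $f$ there depends only on the geometry of $\BB(q+D)$, a piece carried over to $V$ by the bi-Lipschitz embedding $\beta$, and because the constant $D$ depends only on the topology of the ends, which is preserved by hypothesis. The one caveat, which you correctly flag, is that your $f$ depends on the bi-Lipschitz constant of $\beta$, left unquantified in the statement but uniformly bounded (by $2$) in the application to the maps $\psi_n$ arising from strong convergence.
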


\subsubsection{Proof of Theorem~\ref{thm:strong=unif}: conclusion } \label{sec:thmA2}

We have a  geometrically finite   Kleinian group  $\Gamma$  without parabolics,   which does not split as a free product, together with a sequence of strictly type preserving
isomorphisms  $\rho_n \co \Gamma  \to G_n$  which converge strongly to a  purely loxodromic Kleinian group $G_{\infty} = \rho_{\infty} (\Gamma )$.
Our aim is to use the criterion of Corollary~\ref{unifcrit1aa} to show that the corresponding sequence of $CT$-maps $\hat i_n: \Lambda_{\Gamma} \to  \Lambda_{G_n}$ converges uniformly to $\hat i_{\infty}: \Lambda_{\Gamma} \to  \Lambda_{G_{\infty}}$.

Since  the representations $\rho_n$ converge strongly to $G_\infty$, 
by Proposition~\ref{cores}  we may as well assume that we have a compact core 
$K$ of $N = \HHH^3/\Gamma$ together with
embeddings $\phi_n \co K \hookrightarrow M_n = \HHH^3/G_n, n \in \mathbb N \cup \infty$, such that $K_n = \phi_n(K)  $ is a Scott  core  of  $M_n$ and  $\mathcal K = \phi_{\infty}(K)$ is a Scott core of $M_{\infty}$.  We may also choose the lift $O$ of the  basepoint $o \in K$  and lifts $\til \phi_n$ of $\phi_n$ so that for all $n$, 
$\tilde \phi_n (O)$   lies in a uniformly bounded neighborhood of $O \in \Hyp^3$. 

We need   to show that there exists a  function  $f_1 \co \mathbb N \to \mathbb N$ such that whenever  $\lambda$ is a $d_{\G}$-geodesic segment  lying outside $B_{\G}(1; f_1(L))$ in $\Gr \G$,  the  $\HHH^3$-geodesic   $[j_n( \lambda)]$ lies outside $B_{\HHH}(O;L)$  for all $n \in \mathbb N$, where as usual $j_n \co \Gr \G \to \HHH^3, j_n(\gamma) = \rho_n(\gamma) \cdot O$.

As in \ref{sec:thmA1} above, let $E^i$, $i=1, \ldots, r$ be the ends of $M= M_{\infty}$. 
Given $L \in \mathbb N$, choose compact submanifolds  $E^i_1 \subset E^i$ homeomorphic to $S^i \times [0,1]$
such that $d_M((E^i \setminus E^i_1), \mathcal  K) \geq 2L$, where $d_M$ denotes the metric induced by shortest paths in $M$.  Identifying $M$ with the model $\mathfrak M$, pick $q \in \mathbb N$ so that $E^i_1 $ is contained in the union of the first $q$ blocks of $E^i$ for each $i= 1, \ldots, r$.  Then  with the notation of Proposition~\ref{blocks}, we have 
$d_M(M \setminus \BB(q), \mathcal  K) \geq 2L$.

By strong convergence, there exists $n_0 =n_0(L) \in \mathbb N$ such that for all $n \geq n_0$, 
there exists a $2$-bi-Lipschitz embedding $\psi_n \co \BB(q+D)  \rightarrow M_n$, and such that     $\psi_n ( \BB(q+D) )  \supset K_n = \psi_n(\K)$ cuts off the ends of $M_n$, see Lemma~\ref{cutoffend}.  Let $o \in \K$ be a base-point which lifts to  $O \in \HHH^3$.
Up to adjusting by a bounded distance, we may assume that $
\psi_n$ lifts to a map $\tilde \psi_n$ with $\tilde \psi_n (O) = O$, and that $\tilde \psi_n (O) $ projects to the base-point $o_n \in M_n$ and $\psi_n(o) = o_n$.  
 
Now apply Corollary \ref{blocks1}  with $M =M_{\infty} $ and the integer $q$, and with $V=M_n$ and $\beta = \psi_n$, to see  there exists a function $f  \co \mathbb N \to \mathbb N$, independent of $n$,  with the following property. Let  $\lambda$ be a $d_{\G}$-geodesic segment in $\Gr \G$   lying outside $B_{\G}(1, f (2L))$.
Since $(\psi_n)_* \circ \rho_{\infty} = \rho_n$ we have
$\til \psi_n \circ j_\infty = j_n$.  Let  $ [j_n(\lambda)]$ be the $\HHH^3$-geodesic segment  with the same endpoints as $ j_n(\lambda)$.
Then  by  Corollary \ref{blocks1}, $ [j_n(\lambda)] \cap \til \psi_n(  \til {\BB(q)})$ lies outside $B_{\HHH^3}(O,2 L) \subset \HHH^3$, where $   \til {\BB(q)} \subset \til M_{\infty}$ is the lift to $\til M_{\infty}= \HHH^3$ of  $ \BB(q)$.

Let $\pi\co \HHH^3 \to   M_n$ be the covering projection so that in particular $\pi(O) = o_n $. Since  $  \psi_n(  \BB(q))$ cuts off the ends of $M_n$, then since $\psi_n$ is 2-bi-Lipschitz,  any point in $\pi( [j_n(\lambda)]) $ which lies outside $\psi_n(  \BB(q))$
must be at least distance $L$ to $o_n$. It follows that  whenever $n \geq n_0$, 
$ [j_n(\lambda)]$ lies outside  $B(  \til \psi_n(O),L) \subset \til M_n = \HHH^3$  whenever $\lambda$  lies outside  $B_{\G}(1,f(L))$ in $ {\Gr \G}$.

It remains only to deal with $n < n_0$. By Corollary~\ref{unifcrit1aa}, for each $n \in \{ 1, \ldots, n_0\}$,
there exists $N_n = N_n(L)  \in \mathbb N$, such that if 
$\lambda$   is a $d_G$-geodesic outside $B_{\G}(1, N_n(L))$  in $ {\Gr \G}$, then $[j_n(\lambda)]$ lies outside $B (O, L) \subset \HHH^3$.
Choosing $f_1(L) = \max \{f(L), N_1(L), \ldots ,N_{n_0}(L)\}$ we have verified the criterion of Corollary~\ref{unifcrit1aa}. This completes the proof of Theorem~\ref{thm:strong=unif}.

\section{Algebraic limits and non-convergence of limit points}\label{sec:brock}

In this section we prove Theorem~\ref{brockexample}, of which Theorem~\ref{thm:alg=ptwise} is an immediate consequence. The sequence of groups $G_n$ in Theorem~\ref{brockexample} is that described by Brock in \cite{brock-itn}, in which the convergence is algebraic but not strong. We begin with
a brief description of  the examples and Brock's bi-Lipschitz models for the  manifolds involved.

\subsection{Brock's Examples}  \label{brockexamples}
   The groups $G_n$ in Theorem~\ref{brockexample} are
a sequence of quasi-Fuchsian surface groups converging
algebraically but not strongly to a partially degenerate geometrically infinite surface group $G_{\infty}$ with an accidental parabolic. 
The examples are also discussed briefly in~\cite{minsky-cdm}.

The sequence $G_n$ is obtained as follows.
Fix a closed hyperbolic surface $X= \HHH^2/\G$. Let $\sigma$ be  a simple closed geodesic which separates  $X$ into two  subsurfaces $R$ and $L$.  Let $\alpha$ denote an automorphism of $X$ such that $\alpha |_{L}$ is the identity and $\alpha  |_{R} = \chi $ is a pseudo-Anosov diffeomorphism of
$R$ preserving the boundary $\sigma$. (For later reference, it is important to ensure that  there is no  Dehn twisting around $\sigma$ when  $\chi$ is considered as the restriction of $\alpha$ to $\pi_1(R)$, see~\ref{sec:noncnvg} below.)
Let $G_n$ be the quasi-Fuchsian group given by the simultaneous uniformization of $( \alpha ^n(X),X)$, so that $G_n = \rho_n(\G)$ for suitably normalized $\rho_n : \G \to SL(2,\C)$ and $G_0$ is Fuchsian.  
This means that  the regular set $\Omega_n$ of $G_n$ has two components 
$\Omega_n^{\pm}$ where the `lower' component $\Omega_n^{-}/\G$  is conformally equivalent to $X$ and  the `upper' component   $\Omega_n^{+}/\G$
is equivalent to $\alpha ^n(X)$.
The algebraic limit $G_{\infty}$ of the  groups $G_n$ is a partially degenerate geometrically infinite surface group, while with suitable choice of basepoint, the geometric limit of the  manifolds $M_n= \HHH^3/G_n$ is homeomorphic to $X\times \mathbb R \setminus R \times \{0\}$. These assertions will be explained  in more detail below.  Since to 
 fully understand our example, it is important  to be clear about the notational conventions, we begin by setting these out. As far as possible, we follow Brock  \cite{brock-itn}.

 \subsubsection{Teichm\"uller space and the mapping class group} \label{sec:teich}

Let $S$ be an oriented hyperbolizable surface. The  Teichm\"uller space  $\Teich(S)$ of $S$ parametrizes finite area hyperbolic structures on $S$ up to isotopy. Thus a point $X \in \Teich(S)$ is a hyperbolic surface $X$ equipped with a homeomorphism $f \co S \to X$ which marks $X$.
 The mapping class group $\Mod (S)$ acts on $\Teich(S)$: 
if $\alpha \in \Mod (S)$ then $\phi(S,f) = (S, f \circ \alpha^{-1})$. The map on surfaces induces an action on the space of representations $\rho \co \pi_1(S,x_0) \to \PSL$ by 
$\alpha (\rho ) = \rho \circ \alpha^{-1}$.

 If $\zeta \co [0,1] \to S$ is a  path in $S$, we denote by $ \alpha(\zeta)$ the path  $\alpha \circ \zeta   \co [0,1] \to S$.
Thus defining a hyperbolic surface $X$ in terms of the lengths $\ell_X(\gamma)$ of the simple closed curves $\gamma$ on $X$ and identifying $X$ with $S$ by taking $f = \rm{id}$, we can write
\begin{equation} \label{eqn:lengths}
\ell_{\alpha(X) }(\gamma) =\ell_X(\alpha^{-1}(\gamma) ). \end{equation}
Hence the map $\alpha\co X \to\alpha(X)$ is an isometry.
The action on curves extends to an action on the space of measured laminations $\ML(S)$ on $S$: for $\mu \in \ML(S)$ the lamination $\alpha(\mu)$ is defined by (geometric) intersection numbers:
\begin{equation}\label{eqn:lengths1}
i(\gamma, \alpha(\mu) )=  i(\alpha^{-1}(\gamma), \mu) \end{equation} for all $\gamma \in \pi_1(S)$.
Thus if $\alpha \in \Mod(S), \mu \in \ML(S)$ and $X \in \Teich(S)$ we have
$
\ell_{\alpha(X)} (\mu) = \ell_{X} (\alpha^{-1}(\nu)) $
 where $\ell_X(\mu)$ denotes the length of the lamination $\mu$ in the surface $X$. 

 \subsubsection{Quasi-Fuchsian groups}  \label{sec:quasifuchs}

A quasi-Fuchsian group $G$ is the image of a discrete faithful representation $\rho \co \pi_1(S,x_0) \to \PSL$, whose
 domain of discontinuity has two simply connected components $\Omega^{\pm}$.  
 By convention we take $\Omega^+$ to be the component whose orientation is the same as that of $S$.
By Bers' simultaneous uniformisation theorem, a pair of points $X, Y \in \Teich(S)$ parametrize quasi-Fuchsian groups: if $G=G (X,Y) = \rho(\pi_1(S,x_0))$ then $Q(X, Y)$ denotes the manifold 
 for which
$\Omega^+/G $ is conformally equivalent to $X$ and $\Omega^-/G $ is anti-conformally equivalent to $Y$.
Strictly this only defines $G(X, Y)$ up to conjugation. 

To mark $Q(X, Y)$  and fix $G(X, Y)$ we proceed as follows. 
  Fix a base point $s_0 \in S$. Let $Y $ be the point $(S,f) \in \Teich(S)$ and let $ y_0 = f(s_0)$.
Fix a lift  $\tilde f \co  \HHH^2  \to \Omega^-$   which descends to $f$, where we identify $\HHH^2 $ with the universal cover of $S$ and where $O= O_2 \in \HHH^2$ is a fixed basepoint
which descends to $s_0$, and set $\tilde y_0 = \tilde f (O_2)$.
Now the convex hull $\CC$ of $Q(X, Y)$ is bounded by two pleated surfaces $\partial \CC^{\pm}$ which (when positively oriented, that is, so that
$\partial \CC^{+}$ is oriented pointing out of $\CC$ and  $\partial \CC^{-}$ is oriented pointing into $\CC$)
are respectively uniformly bounded distance to $X, Y$ in $\Teich(S)$. There is a natural retraction map $ r$ from $ \Omega^-$ to the lift $\widetilde {\partial \CC^-}$  to $\HHH^3$ of $\partial \CC^-$~\cite{EpM}; we arrange that $r(\tilde y_0) = O = O_3 \in \HHH^3$.  This gives a map 
$\tilde f \co (\HHH^2, O_2) \to (\widetilde {\partial \CC^-}, O_3)$ which descends to a map $S \to Q(X,Y)$ which sends $S$ to a pleated surface in $Q(X,Y)$ at uniformly bounded Teichm\"uller distance to $Y$.  We use this marking to induce  the representation $ \rho \co \pi_1(S, s_0) \to  G(X, Y)$.
We fix the basepoint $o \in Q = \HHH^3 /G$ to be the projection of the point $O \in \HHH^3$.

 \subsubsection{Iteration of pseudo-Anosovs} \label{sec:pseudo} For details on measured laminations and pseudo-Anosov maps, see \cite{FLP, Otal}.  Here is a summary of what we need. Let $\chi \in \Mod (S)$ be pseudo-Anosov. Then $\chi$ has two fixed points in the space $PML(S)$ of projective measured laminations on $S$:
 the   \emph{stable lamination} $\lambda^s$ and an \emph{unstable lamination} $\lambda^{u}$.  
(If necessary, we distinguish the underlying lamination $|\lambda|$ from its transverse measure $\lambda$.) This means there exists $c>1$ so that  $\chi(\lambda^s) =  \dfrac{1}{c}\lambda^s$ and $\chi(\lambda^u) =  {c}\lambda^u$. 
From \eqref{eqn:lengths1} this gives $ i(\chi^{-1}(\gamma), \lambda^u)  = i(\gamma, \chi(\lambda^u))  = ci(\gamma, \lambda^u)$.  
Thus $\lim_{n\to \infty} i(\chi^{n}(\gamma), \lambda^u) \to 0$ so that (since $\lambda^u$ is uniquely ergodic), 
\begin{equation}\label{eqn:cnvg}   [\chi^{n}(\gamma)] \to [\lambda^u]  
\ \ \rm{in} \    PML(S) 
\end{equation}
 where $[\mu]$ denotes the projective equivalence class of $\mu \in ML(S)$ in $PML(S)$.

 Let $X$ be a fixed surface in $\Teich(S)$. 
The boundary leaves of laminations $|\lambda^u|,| \lambda^s|$ decompose $X$ into a collection of rectangles in each of which we have a  metric $\sqrt {(\lambda^u)^2 +( \lambda^s)^2}$ (or more simply the equivalent metric $ \lambda^u+\lambda^s $). Putting these together gives a metric quasi-isometric to the hyperbolic metric on $X$.
Then $\ell_X(\gamma) \sim i(\gamma, \lambda^u) +  i(\gamma, \lambda^s) $ and for any arc $T$ we have $\ell_X(T) \sim i(T, \lambda^u) +  i(T, \lambda^s) $, where $\sim$ denotes equality up to multiplicative bounded constants. In particular, if $T$ is an arc along an unstable leaf then $\ell_X(T) \sim   i(T, \lambda^s) $ so that 
$\ell_{\chi(X)}(T) \sim   i(\chi^{-1}(T), \lambda^s)  = c^{-1}  i( T, \lambda^s) \sim c^{-1}\ell_ X(T) $.  In other words, 
$\phi$ contracts along unstable leaves.
Also observe that  since 
\begin{equation} \label{lengths2}
\ell_{ \chi^n(X)}(\gamma) = \ell_{  X}(\chi^{-n}(\gamma)) \sim    i( \gamma, \chi^{n}\lambda^s) +i( \gamma, \chi^{n}\lambda^u)    = c^{n} i( \gamma,  \lambda^u) + c^{-n} i( \gamma,  \lambda^s),\end{equation}
 it follows by taking ratios of lengths  that $ \chi^n(X) \to [\lambda^u]$ in $PML(S)$, viewed as the Thurston compactification 
of $\Teich(S)$.

 \subsubsection{The algebraic limit for iteration of pseudo-Anosov maps}  \label{sec:pseudoAlimit} 

Given a surface $S$ and $ \alpha \in \Mod(S)$, the \emph{mapping torus} of $(S,\alpha)$ is the manifold $T_{\alpha} = S \times [0,1] / \sim $   where $\sim $  is the equivalence relation $(x,0) \sim ( \alpha(x),1)$.    
Let  $N_{\alpha} = S \times \mathbb R $ be the cyclic cover of $T_{\alpha}$, corresponding to the subgroup $\pi_1(S)$.   The manifold $N_{\alpha}$ is naturally oriented by the orientation of $S$.
  
If $\chi \in \Mod(S)$ is pseudo-Anosov, then Thurston showed that  $T_{\chi}$  and hence $N_{\chi}$ has a hyperbolic structure~\cite{Otal, ctm-renorm}. 
Pick $(\Sigma,f) \in \Teich(S)$ and consider the  manifold $M_{\chi}$ corresponding to the 
algebraic limit of the quasi-Fuchsian groups $G(\chi^n (\Sigma),\Sigma)$, where $Q(\chi^n (\Sigma),\Sigma)$  is marked as described above. 
McMullen
~\cite{ctm-renorm} Theorem 3.11 shows that the  limit manifold   
 $M_{\chi}$ has  one positive degenerate end $E$ which is asymptotically isometric to 
the positive end of  $N_{\chi }$. (Note that the positive end of $N_{\chi }$ is, up to complex conjugation, the negative end of $N_{\chi^{-1} }$, see ~\cite{ctm-renorm} Proposition 3.10.)
 
The end  $E = E(\Sigma,\chi)$ 
consists of successive sheets 
  which are mapped one to the next by $\chi $.
  More precisely,  we have a sequence of pleated surfaces $h_j \co S \to E$ exiting $E$ such that the $j^{th}$ level surface is marked by the map  $   f \circ \chi^{-n} \co S \to M_{\chi}$. In particular $h_0 \co S \to \Sigma \times \{0\}$ is the map 
$ h_0(x) = (  f^{-1}(x), 0 )$ for $x \in \Sigma$; loosely, $h_0$ identifies $S$ with  $\Sigma_0  = \Sigma \times \{0\}$.
Up to quasi-isometry, $E $ is modelled by $ \Sigma \times [0,\infty)$  with the image of the
   $j^{th}$ level  pleated surface $ \Sigma_j $ identified with  $ \Sigma \times \{j\}$.  
   The hyperbolic structure of $ \Sigma_j$ is  the point $\chi^n( \Sigma) \in \Teich(S)$.
Now put a metric on  $ \Sigma \times [0,1]$  which smoothly interpolates between $ \Sigma_0$ and $ \Sigma_{1}$ and then transport this metric to 
 $ \Sigma \times [i,i+1]$  using the isometry $\chi^{i}$. This gives  a uniformly bi-Lipschitz homeomorphism from $E $ to the model manifold $  \Sigma \times [0,\infty)$. The model is marked by the map $h_0 \circ f$ which sends a base-point $s_0 \in S$ to a base-point
  $o = (f(s_0),0) \in \Sigma_0$
 
 The convex cores  of the approximating manifolds $Q(\chi^n ( \Sigma), \Sigma)$ are equally modelled by 
 $  \Sigma \times [0,n]$ with the restriction of the above metric. 
 The above marking  is, up to a uniformly bounded discrepancy, the same as the one described in~\ref{sec:quasifuchs}, and hence determines the limit representation $ \rho \co \pi_1(S,s_0) \to \pi_1(M_{\chi}, o)$.

 \begin{lemma}[\cite{brock-itn} Lemma 4.4] \label{endinglam} The ending lamination of   the end $E$ of $M_{\chi}$ is the unstable lamination $ \lambda^u$ of $\chi$.  \end{lemma}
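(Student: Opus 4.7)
The plan is to identify the ending lamination of $E$ by exhibiting an explicit sequence of simple closed curves on $S$ whose geodesic realizations in $M_{\chi}$ exit $E$ with uniformly bounded length and whose projective classes converge in $PML(S)$ to $[\lambda^u]$. Fix any essential simple closed curve $\gamma_0$ on $S$ and consider the sequence $\gamma_n = \chi^n(\gamma_0)$.

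First I would show these curves have uniformly bounded length on the exiting pleated surfaces $\Sigma_n$. Since $\chi$ acts as an isometry from $\chi^j(\Sigma)$ to $\chi^{j+1}(\Sigma)$, applying \eqref{eqn:lengths} gives
\[
\ell_{\Sigma_n}(\gamma_n) = \ell_{\chi^n(\Sigma)}(\chi^n(\gamma_0)) = \ell_{\Sigma}(\gamma_0),
\]
a constant independent of $n$. By construction the pleated surfaces $h_n \co S \to \Sigma_n \subset M_{\chi}$ exit $E$, and the geodesic representative of $\gamma_n$ in $M_{\chi}$ lies within uniformly bounded distance of $h_n(S)$ by the standard pleated surface length estimate; hence these geodesic representatives exit $E$ as well. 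Meanwhile, \eqref{eqn:cnvg} combined with the unique ergodicity of $\lambda^u$ yields $[\gamma_n] = [\chi^n(\gamma_0)] \to [\lambda^u]$ in $PML(S)$.

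To close the argument I would invoke the standard characterization (due to Thurston and Bonahon, and built into the Minsky model) that the ending lamination of a simply degenerate end is the unique projective limit in $PML(S)$ of any sequence of essential simple closed curves whose geodesic realizations exit the end with uniformly bounded length. Since $\chi$ is pseudo-Anosov, $|\lambda^u|$ is minimal, filling, and arational, so it qualifies as a legitimate ending lamination, completing the identification. The main technical subtlety is to ensure that the geodesic representatives of $\gamma_n$ exit specifically into $E$ rather than elsewhere, but this is immediate since $M_{\chi}$ has a unique simply degenerate end $E$ (the opposite end being geometrically finite on the Fuchsian side) and the bounded-length geodesics track the pleated surfaces $\Sigma_n$ as these exit $E$.
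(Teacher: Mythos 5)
Your proposal is correct and follows essentially the same route as the paper's own argument: both take the sequence $\chi^n(\gamma)$, use \eqref{eqn:lengths} to see that its length on the exiting pleated surface $\Sigma_n$ equals $\ell_{\Sigma}(\gamma)$, use \eqref{eqn:cnvg} to get $[\chi^n(\gamma)]\to[\lambda^u]$ in $PML$, and conclude by the characterization of the ending lamination as the limit of bounded-length curves exiting the end. The only presentational difference is that the paper spells out the free homotopy (via the vertical arc $\tau_n$) identifying the based class $[\rho(\chi^n(\gamma))]$ at level $0$ with the curve $(\chi^n(\gamma),n)$ on $\Sigma_n$, a step you implicitly absorb into the marking of the level surfaces.
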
  
\begin{proof} This is proved in~\cite{brock-itn}. Here is a
variant which will serve as a check we have the correct conventions. Think  of the model $E =  \Sigma \times [0,\infty)$ as quasi-isometrically embedded in $M_{\chi}$. Let $s_0 $ be the base-point in $S$. As above, with $(\Sigma, f) \in \Teich (S)$ we  have base-point  $o= (f(s_0),0) \in  \Sigma_0 \subset M_{\chi}$. A loop $\gamma \in \pi_1(S,s_0) $ defines a path 
$(f \circ \gamma,0) \subset    \Sigma_0 $ and hence a homotopy class $[\rho(\gamma)] \in \pi_1(M_{\chi}, o)$.
 Now consider the path  $\chi^n( \gamma) = \chi^n \circ \gamma \in \pi_1(S)$. To find the approximate length of the geodesic in the class   $[\rho(\chi^n( \gamma))]$ in $M_{\chi}$,  let $\tau_n $ be the path $ t \mapsto  (f(s_0),t), t \in [0,n] \subset E$.
Note that  $ (f \circ \chi^n( \gamma), 0)  $ is homotopic   in $M_{\chi}$ to the loop $\tau_n (f \circ \chi^n( \gamma),n) \tau_n^{-1}  $ and hence freely
 homotopic to the path $ (\chi^n( \gamma),n)  \subset    \Sigma_n $.

 Now   by \eqref{eqn:lengths}, $\ell_{ \Sigma_n} (\chi^{n}(\gamma)) = \ell_{\chi^n( \Sigma)} (\chi^{n}(\gamma)) =   \ell_{ \Sigma} ( \gamma)$.
Thus the sequence of curves 
$(\chi^{n}(\gamma),n)$ on the pleated surfaces $ \Sigma_n$ exit the positive end of $M_{\chi}$ and have uniformly bounded length. This means they converge to the ending lamination of $M_{\chi}$.
On the other hand, by \eqref{eqn:cnvg}, $[\chi^{n}(\gamma)] \to   [\lambda^u]$ in $PML(\Sigma)$. Hence
the ending lamination of   $M_{\chi}$ is $ \lambda^u$. \end{proof} 

 \subsubsection{The algebraic limit for iteration of partially pseudo-Anosov maps}  \label{sec:partialpseudoAlimit} 

 Now we turn to the case under consideration,  in which $\alpha \in \Mod S$ is  partially pseudo-Anosov  as described in Section~\ref{brockexamples} above. Thus $S $ is now a closed surface separated into two components $R$ and $L$ by  a simple closed curve $\sigma$
 and $\alpha \in \Mod (S) $  is such that $\alpha |_{L}$ is the identity and $\alpha  |_{R} = \chi $ is a pseudo-Anosov diffeomorphism of
$R$ preserving $\sigma$. 

Given $X \in \Teich(S)$  we set  $G_n =  G(\alpha^n(X), X)$ and $M_n =   Q(\alpha^n(X),X)$, so that  $M_n$  is the manifold  such that   $\Omega^+/G_n$ is conformally equivalent to $\alpha^n(X)$ while $\Omega^-/G_n$ is anti-conformally equivalent to $X$. In particular, $G_0$ is a Fuchsian  group uniformizing $X$ and our sequence of representations are the maps $\rho_n \co G_0 \to G_n$.

Brock showed, ~\cite{brock-itn} Theorem 5.4, that   the
representations $\rho_n$ 
converge algebraically to a representation $\rho_{\infty} \co G_0 \to G_{\infty}$ where $G_{\infty}$ is a geometrically infinite surface group with 
corresponding manifold $M_{\infty}$.
The regular  set $\Omega_{\infty}$ of $G_{\infty}$  has one $G_{\infty}$-invariant simply connected component $\Omega_{\infty}^-$ such that $\Omega_{\infty}^-/G_{\infty}$ is conformally equivalent to $X$. The positive end (corresponding to $\Omega^+$) has degenerated: if $g_{\sigma} \in \pi_1(S)$ corresponds to the separating curve $\sigma$, then  $\rho_{\infty}(g_{\sigma})$ is an accidental parabolic    and $\Omega_{\infty}^+$ collapses to a countable collection of simply connected components
$\Omega^{+,i}$ whose stabilisers are each conjugate to $\rho_{\infty}(\pi_1(L))$, so that $\Omega^{+,i}/ G_{\infty}$  is  a Riemann surface topologically equivalent to $\Int L$ for each $i$, with  $\rho_{\infty}(g_{\sigma})$ representing a loop encircling a puncture. 
Correspondingly, the upper end of $M_{\infty}$ is partially degenerate; the part corresponding to $L$ is geometrically finite   while the part corresponding to $R$ is degenerate with ending lamination the unstable lamination of $\chi$.

Let $H$ be a horocyclic neighborhood of the cusp 
corresponding to $\rho_{\infty}(g_{\sigma})$
in  $M_{\infty} $.
The assertion of~\cite{brock-itn} Theorem 5.4 is that the end of $M_{\infty} \setminus H$ cut off by the surface $R$ is asymptotically isomorphic to the end $E(\Sigma,\chi)$ described in the previous section, where 
 $\Sigma$ is a hyperbolic surface with the same topology as $\Int R$ but  equipped with a complete hyperbolic structure
so that the boundary   $\sigma = \partial R$  is replaced by a cusp on $\Sigma$ and $\chi = \alpha_{|R}$.

 \subsubsection{Models of the approximating manifolds}\label{sec:approxmodels}

Minsky \cite{minsky-elc1} \S 6.5 contains a description of the convex core of $M_n = Q(\alpha^n(X),X)$
in terms of a uniformly bi-Lipschitz model for its convex core $\CC_n$.  Fix $X \in \Teich (S) $ such that $\ell_{X}(\sigma) < \epsilon_0$ for some $\epsilon_0$ less than the Margulis constant. As above $\sigma$ separates $X$ into surfaces $R,L$; when needed we distinguish between the topological surfaces $R,L$ and the hyperbolic structures $X_R, X_L$ induced from $X$.

Also pick a complete hyperbolic surface  $\Sigma$ with the same topology as $\Int R$ but  
so that the boundary   $\sigma = \partial R$  is replaced by a cusp on $\Sigma$.  Let $\Sigma^c$ denote $\Sigma$ with a small (open) neighborhood of the cusp removed so that the boundary curve, which we denote $\sigma^c$,  has length $\epsilon_0$.

First we make a model $B_n$ for the part of $\CC_n$ corresponding to $ R$.
Let
$N= N_{\chi} $  be the hyperbolic 3-manifold with fiber $\Sigma$ and monodromy $\chi$. Let $N_n$ denote the cyclic $n$-fold cover of $N$, i.e.\,the manifold whose fundamental group is the kernel of the homomorphism $\pi_1(N) \rightarrow \pi_1(S^1) = \mathbb{Z} \rightarrow  {\mathbb{Z}}_n$.
Let $N_n^c$ denote $N_n$ with a small (open) neighborhood of the cusp removed so that the boundary curve of each fiber has  length $\epsilon_0$. Let $\hat N_n^c$ be 
 the manifold obtained by cutting $N_n^c$ open along a lift of some fiber and completing  metrically  to a manifold with boundary. Then 
just as described in Section~\ref{sec:pseudoAlimit}, there is a uniformly bi-Lipschitz homeomorphism from $\hat N_n^c$  to a model manifold $B_n= \Sigma^c \times [0,n]$,  in which the block $\Sigma^c \times [i,i+1]$ is isometric to $\Sigma^c\times [0,1]$ by a map homotopic to $\chi^{-i}$.  
In the model metric on $B_n$, the boundary loop $\sigma^c \times \{t\}$  has  length  $\epsilon_0$ for each $t \in [0,1]$ and the boundary cylinder $\sigma^c \times [0,n]$
has the obvious product of the Euclidean metrics on $\sigma^c$ and the interval $[0,n]$.
   (The discussion in  \cite{bowditch-ct} \S 8, especially Proposition 8.6 for the discussion of $\partial \Sigma$, explains the model  for $N_n^c$  in a neighborhood of a puncture.)

The model of the part of $\CC_n$ corresponding to  $ L$   is
essentially the product metric on   $C= X_L \times [0,1]$. We modify the metric on $X_L$ slightly to ensure the  boundary
 circles $\partial L \times \{t\}$ all  have length   $\epsilon_0$ and take the standard Euclidean metric of length one on the second factor.

Now we can make model $K_n$ for the whole of $\mathcal C_n$. Glue the circle $\partial L \times \{ 0 \} \subset C_n$  to the circle  $ 
\partial R \times \{ 0 \}= 
\sigma^c \times \{ 0 \} \subset B_n$, and likewise glue the  circle  $\partial L\times \{ 1 \}$ to the  circle 
 $\sigma^c \times \{ n \}$.  Let $K_n^c$ denote the resulting space. Let $\eta$ denote a circle of length $(n+1)$ obtained by moving in the  direction of the second factor  in $K_n^c$.   Finally, let  $K_n$ be  the manifold (with boundary)
obtained by (hyperbolic) Dehn filling $K_n^c$  with a Margulis tube $T_n$ with meridian $\eta$ and longitude $\sigma^c$, smoothing  out at the boundary if needed.  
Figure~\ref{fig:potbelly}  shows a `cross-section' of $K_n$. 
Note that  
the manifolds $K_n$ have uniformly bounded geometry away from $T_n$.

The lower  boundary of $K_n$,   denoted $\dd K_n^-$,  is obtained by gluing $\partial L \times \{ 0 \}$   to   $\partial R \times \{ 0 \}$. Thus we have an obvious map  $ \phi_n \co S \to \dd K_n^-$  which we use  to mark $K_n$.  If $s_0 \in \sigma$ is the base-point of $S$, we denote the image $\phi_n(s_0) = (s_0,0) \in \partial L \times \{ 0 \}$   by $ o_n$.
Lifting everything to universal covers, identifying $\til S$ with $\HHH^2$, and thinking of $\til K_n \subset \HHH^3$, we can arrange that $s_0$ lifts to $O \in \HHH^2$ and $o_n$ lifts to $O_n = O \in \HHH^3$. 

The upper  boundary of $K_n$,   denoted $\dd K_n^+$,  is obtained by gluing $\partial L \times \{ 1 \}$   to   $\partial R \times \{ n \}$. This gives a second obvious embedding 
$ \phi_n^+ \co S \to \dd K_n^+$ and we write $o_n^+ = \phi_n^+(s_0)$, with lift
$O_n^+ \in \HHH^3$.
Occasionally we write $\o_n^-$ for $o_n$, $O_n^-$ for $O_n= O$ and  $\phi_n^-$ for $\phi_n$ for clarity.

To see that the model manifolds $K_n$ are bi-Lipschitz equivalent to the convex cores $\CC_n$, 
note that the  marked surfaces $\dd K_n^{\pm}$ of $K_n$
are conformally  a uniformly bounded Teichm\"uller distance from the surfaces
$\alpha^n(X), X$ respectively, precisely as in the case of the manifolds $M_n = Q(\alpha^n(X), X)$.
Thus the standard techniques used in the proof of the ending lamination theorem  show that there  are bi-Lipschitz  homeomorphisms  between $ \CC_n$ and $K_n$, with constants 
 uniform in   $n$.   These are the models we will use.

\begin{figure}[hbt] 
 \centering
 \includegraphics[height=8cm]{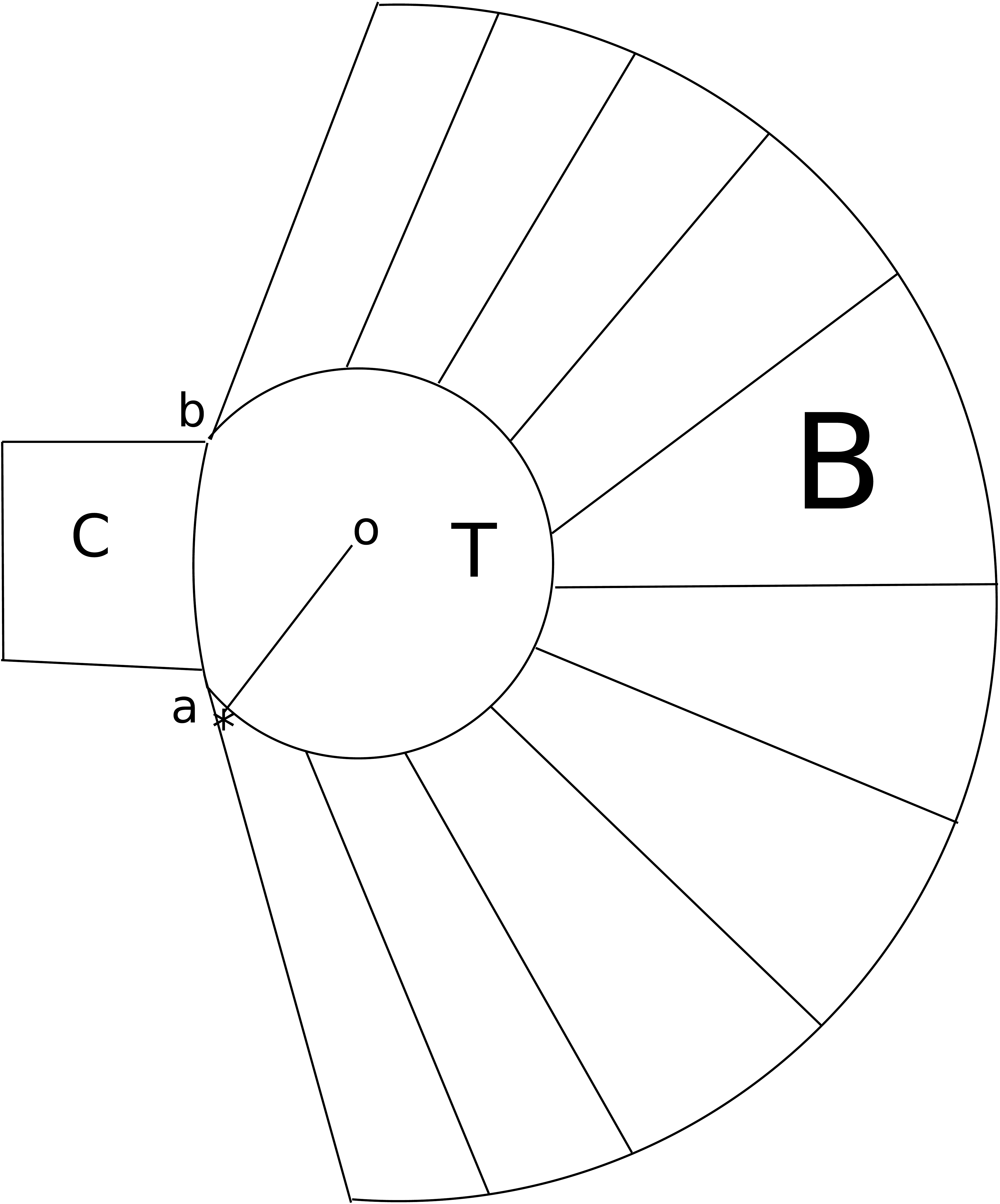}
\caption{A schematic picture of $K$ built up of $B$, $C$ and $T$. The points $a,b$ are the base-points $  \phi_n^-(s_0) =o_n^-$ and $ \phi_n^+(s_0) = o_n^{+}$ respectively. }
\label{fig:potbelly}
 \end{figure}

\subsubsection{The limit manifolds}\label{sec:limitmodels}
In the algebraic limit, the tube $T_n$ becomes a rank one cusp, see also~\cite{minsky-cdm}.  The lower boundary $\dd K_n^-$ of $K_n$ stays fixed but the upper boundary $\dd K_n^+$ develops into a partially degenerate end in which $L$ becomes a surface with a puncture. The part of the surface corresponding to $R$   becomes the degenerate end $E$ described in ~\ref{sec:approxmodels}.

In the geometric limit, the distance from $o_n^- \in \dd K_n^-$ to $o_n^+  \in \dd K_n^-$ stays bounded, because we can always travel through the bounded half $C = L \times [0,1]$. However to reach a point on a `middle' layer $\Sigma^c \times \{n/2\}$ of $B_n$  we have to travel ever further, either going directly `up' through $B_n$ or through $L \times [0,1]$, crossing $\partial L \times \{1\}$,  and then `down' from   $\Sigma^c \times \{n\}$ through $B_n$ to $\Sigma^c \times \{n/2\}$. Thus in the geometric limit, $K_n$ converges to  the manifold
$ S \times \mathbb R \setminus R \times \{0\}$  with two geometrically infinite ends, each asymptotically quasi-isometric to $E$. This is discussed in detail in~\cite{brock-itn}, but is not important for us here.

\subsection{Absence of Uniform Convergence}  \label{sec:nonUC}
Since the sequence $G_n$ does not converge strongly,  by Proposition~\ref{uep}, it must fail to satisfy UEP.  In fact it is easy to exhibit a sequence of points $g_k \in  G_0$ such that $|g_k| \to \infty$ while
$d(O, \rho_n(g_k) \cdot O) \leq c$ for all $n, k$ and some fixed $c>0$. (To see that this is equivalent to violating UEP, see~\cite{mahan-series1} Lemma 5.2.)

The meridian curve $\eta$ round
the boundary of the Margulis tube $T_n$ is split into  two homotopic paths $\tau_n, \upsilon_n$  in $K_n$ by the points $o_n^{\pm} \in \dd K_n^{\pm}$. 
The path  $\tau_n \co t \mapsto (s_0, t), t \in [0,n] $ joins $o_n^{\pm}$ going the `long' way round $\partial T_n$ in $B_n$, while the path $\upsilon_n \co t \mapsto (s_0, t), t \in [0,1] $ goes the `short' way round in $C_n$.

Let $s \mapsto \gamma(s) $ be a based loop homotopic to a fixed generator of $  \pi_1(S, s_0)$
and  lying entirely on $R$.   As in the proof of Lemma~\ref{endinglam}, the path $\gamma_n \co [0,1] \to K_n$, $\gamma_n (s)=   ( \chi^{n}\gamma(s), n) \in \Sigma^c \times \{n\} \subset \dd K_n^+$ has the same length $\ell$ say as the  path $s \mapsto \gamma_0(s) = (\gamma(s),0)\in \Sigma^c \times \{0\}  \subset \dd K_n^-$.

Now  the loops $\tau_n   \gamma_n  {\tau}^{-1}_n$ and  $\upsilon_n  \gamma_n   {\upsilon}^{-1}_n$ are  homotopic  in $K_n$, moreover from the above observation,  $\upsilon_n  \gamma_n  {\upsilon}^{-1}_n$ has length  $ 2 + \ell$ in $K_n$.  On the other hand    in $K_n$,  $\tau_n  \gamma_n  {\tau}^{-1}_n$ is homotopic to the loop $\chi^{n}(\gamma_0)  \subset \Sigma^c \times \{0\}\subset \dd K_n^-$, where by $\chi^{n}(\gamma_0)$ we mean  the path $ s \mapsto (\chi^n(s), 0) \in \Sigma^c \times \{0\}$.
By \eqref{lengths2}, the geodesic length of $\chi^{n}(\gamma_0)$ on $\Sigma^c \times \{0\}$ increases exponentially
with $n$; hence by the usual comparison of word length and geodesic length on
$\Sigma^c \times \{0\}$,  if  $g_n  \in G_0$ represents the loop  $ \chi^{n}(\gamma) \in \pi_1(S,s_0)$ then $|g_n| \to \infty$ in $G_0$.  Since $\rho_n$ is induced by the marking  $\phi_n \co s \mapsto (s,0) \in K_n^-$, the loop $ \chi^n(\gamma_0) $ is in the homotopy class of $\rho_n(g_n) \in \pi_1(K_n; o_n^-)$.

Let $O$ be the lift to $\HHH^3$ of $o_n^- \in K_n$ as above. Lifting the paths $\upsilon_n  \gamma_n  {\upsilon}^{-1}_n$, we have found a sequence $g_n \in G_0$  for which $d_{G_0}(1, g_n ) \to \infty$ but for which $d_{\HHH^3}(O, \rho_n(g_n) O ) $ is uniformly bounded.  As noted above, this violates UEP.

\subsection{Pointwise non-convergence}  \label{sec:nonPC}

Let $\Gamma = G_0$ be the Fuchsian group for which  $X = \HHH^2/\G$. 
Recall that $\sigma$ corresponds to a loxodromic $g_{\sigma} \in \G$  whose image under $\rho_{\infty}$ is parabolic.
Let $\mathcal P \subset \Lambda_{\G}$ denote the endpoints of  axes which project to $\sigma$, equivalently, the set of images under $\Gamma$ of the fixed points of $g_{\sigma}$.
The counter examples we seek for Theorem~\ref{brockexample} occur in the case of $\xi \in \Lambda_{\G}$ for which 
$\xi \notin \mathcal P $ but  $\hat i_{\infty}(\xi)= \hat i_{\infty}(p )$ for some $p \in \mathcal P$. Precisely which points these are is given by the following theorem of Bowditch.

\begin{theorem} [\cite{bowditch-ct} Theorem 0.2] \label{bowditch-ct} Let $ \HHH^2/\G$ be a punctured hyperbolic surface. Let $M= \HHH^2/G$ be a simply  
degenerate hyperbolic manifold
corresponding to a faithful type preserving representation $\rho \co \G \to G$, and suppose that there is a lower bound to the length of all loxodromics in $M$.
Suppose that $M$ has  ending lamination $\lambda$   
 and let $\hat i \co \Lambda_{\G} \to \Lambda_G$ be the corresponding $CT$-map. Then $\hat i (\xi) = \hat i (\eta), \xi,\eta \in  \Lambda_{\G} $ if and only if 
$\xi$ and $\eta$ are either either ideal 
end-points of the same  leaf of $\lambda$,  or ideal boundary points of a 
complementary ideal polygon of  $\lambda$.  \end{theorem}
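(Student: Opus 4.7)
The plan is to establish both implications by exploiting the interaction between the ending lamination $\lambda$ on $\HHH^2/\G$ and a sequence of pleated surfaces exiting the degenerate end of $M = \HHH^3/G$. Throughout, the hypothesis that loxodromic lengths in $M$ are bounded below will be used exactly as in the bounded geometry case sketched in Section~\ref{sec:bndgeom}: it guarantees that the universal cover $\widetilde{M}$ is quasi-isometric to a tree of hyperbolic metric spaces whose vertex fibres are universal covers of pleated surfaces of uniformly controlled geometry, outside of cusps.

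For the \emph{forward} direction (common leaf or complementary polygon $\Rightarrow$ identification), I would pick a sequence of simple closed geodesics $\gamma_k$ on $\HHH^2/\G$ whose projective classes converge to $[\lambda]$ and whose geodesic realisations in $M$ exit the degenerate end. Let $\ell$ be a leaf of $\lambda$ with ideal endpoints $\xi,\eta \in \Lambda_\G$, or let $\xi,\eta$ be two vertices of a complementary ideal polygon. Choose lifts $\tilde \gamma_k \subset \HHH^2$ whose endpoints converge to $\xi$ and $\eta$ respectively (using the fact that leaves of $\tilde\lambda$ are Hausdorff limits of lifts of $\gamma_k$, and diagonals of complementary polygons are obtained by the analogous limiting in a complementary region). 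The corresponding paths $j(\tilde\gamma_k)$ in $\HHH^3$ lie on lifts of pleated surfaces which exit the end, so their diameters tend to infinity but the two endpoint sequences $j_G(\tilde\gamma_k^+)$ and $j_G(\tilde\gamma_k^-)$ remain within bounded hyperbolic distance of a common sequence of points exiting the end along a uniform ladder in $\widetilde M$. Passing to the sphere at infinity gives $\hat i(\xi) = \hat i(\eta)$.

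For the \emph{reverse} direction, I would argue contrapositively: suppose $\xi \neq \eta$ are \emph{not} related by $\lambda$ in either way. Then the $\HHH^2$-geodesic $[\xi,\eta]$ is not a leaf of $\tilde\lambda$, nor a diagonal of a complementary ideal polygon, so it meets $\tilde\lambda$ transversely in an essential way (or, after a bounded modification, lies in a product region bounded by a transverse arc to $\tilde\lambda$). Electrocute horoball lifts of the cusps in both $\HHH^2$ and $\HHH^3$ and pass to the ladder/tree-of-hyperbolic-spaces model of $\widetilde M$ described in Section~\ref{sec:bndgeom}. Under the bounded geometry assumption, the image of $[\xi,\eta]$ in this ladder meets each pleated level in a transverse arc of bounded geometric weight, and a coarse projection argument (exactly as in~\cite{mahan=jdg,mahan=ramanujan}, with the additional care needed for cusps as in~\cite{mahan-series1}) shows that this image is a uniform electric quasi-geodesic. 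Lemma~\ref{ea-genl} then promotes this to an electro-ambient quasi-geodesic, so the hyperbolic geodesic $[j(\xi), j(\eta)] \subset \HHH^3$ is a quasi-geodesic, and its two endpoints at infinity are distinct. Thus $\hat i(\xi) \neq \hat i(\eta)$.

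The main obstacle is the \emph{only if} direction, specifically ruling out spurious identifications when $[\xi,\eta]$ passes through many complementary regions of $\lambda$ without being a diagonal of any of them. One has to show quantitatively that every essential transverse crossing of $\tilde\lambda$ contributes a uniformly bounded amount of electric length, so that long $\HHH^2$-geodesics produce long electric paths in $\widetilde M$, and that complementary regions containing cusps (the accidental parabolics on punctured surfaces) are absorbed into the electrocuted horoball family without creating hidden identifications. This is the step where the hypothesis of a lower bound on loxodromic lengths is indispensable: it prevents pleated surfaces from collapsing into deep Margulis tubes and so keeps the coarse projection onto each ladder level uniformly Lipschitz, which is precisely the ingredient that breaks down in the unbounded geometry setting handled by the Minsky hierarchy in~\cite{mahan-split}.
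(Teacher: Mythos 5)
First, a point of comparison: the paper does not prove this statement at all. It is quoted as Bowditch's Theorem 0.2 from \cite{bowditch-ct}, with the accompanying remark that the closed bounded-geometry case is due to Minsky \cite{minsky-rigidity} and that extensions appear in \cite{mahan-split}, \cite{mahan-elct}, \cite{mahan-elct2}. So there is no internal proof to measure your sketch against, and it has to be judged on its own terms. Your forward direction is the standard argument and is sound in outline: approximating simple closed curves $\gamma_k$ with $[\gamma_k]\to[\lambda]$ have geodesic realisations exiting the end, so the axes of the loxodromics $\rho(g_k)$ lie outside $B(O,d_k)$ with $d_k\to\infty$, forcing their two fixed points to coalesce on $\Chat$; continuity of $\hat i$ then identifies the endpoints of a leaf, and transitivity around consecutive boundary leaves handles the ideal vertices of a complementary polygon (complementary regions meeting a cusp need a separate word — this is precisely the configuration the paper exploits in Proposition~\ref{discont}).

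The reverse direction, however, rests on a step that fails. You claim that if $[\xi,\eta]$ is neither a leaf of $\til\lambda$ nor a diagonal of a complementary polygon, then its image under the fibre inclusion is a uniform electric quasi-geodesic (with only cusp horoballs electrocuted), hence by Lemma~\ref{ea-genl} a hyperbolic quasi-geodesic with distinct endpoints at infinity. This is false: the fibre is exponentially distorted in $\HHH^3$ even in the thick part, and electrocuting horoballs does nothing to repair that distortion. Concretely, since $\lambda$ is filling and geodesics on a finite-area surface recur, a generic $[\xi,\eta]$ transverse to $\til\lambda$ contains arbitrarily long subsegments that fellow-travel leaves of $\til\lambda$, and the images of those subsegments are exactly as distorted as the images of the leaves themselves --- which are so distorted that their endpoints are identified. (In the Sol-like model $dt^2+e^{2t}dx^2+e^{-2t}dy^2$ of the bounded-geometry case, the $\widetilde M$-distance between two points of the fibre grows like the logarithm of their fibre distance, so no infinite fibre geodesic embeds quasi-geodesically.) Thus there is no uniform quasi-geodesic constant, and for most non-leaf pairs the image is not a quasi-geodesic at all, even though $\hat i(\xi)\ne\hat i(\eta)$. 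The actual proofs of the ``only if'' direction do not argue this way: they either build explicit neighbourhood bases of points of $\Lambda_G$ on the sphere out of images of rectangles in the ladder (the Cannon--Thurston/Minsky route), or characterise identified pairs as exactly those whose ladders $\mathcal{L}_{[\xi,\eta]}$ can be flowed up indefinitely, i.e.\ whose geodesics are unrealisable, which is what forces them to be leaves or complementary-polygon diagonals. You have correctly isolated the ``only if'' direction as the hard part and correctly located the role of the lower bound on loxodromic lengths, but the mechanism you propose for that direction does not work and would need to be replaced.
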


This result was originally proved by Minsky~\cite{minsky-rigidity} in the (bounded geometry)
closed surface case. The condition on loxodromics means of course that the injectivity radius of $M$  is bounded below outside a horoball neighborhood of the punctures of $S$. 
This result has been extended to unbounded geometry and more general manifolds in \cite{mahan-split}, \cite{mahan-elct}, \cite{mahan-elct2}.

\subsubsection{The points of non-convergence} \label{sec:noncnvg}

The points of non-convergence of the maps $\hat i_n$ will be the endpoints of lifts of unstable leaves which bound the crown domain of the unstable lamination $|\lambda^u|$ of $\chi$ in $\Int R$. First, as mentioned above, we need to be careful about the precise meaning of saying that 
$\alpha_{|R} $ is pseudo-Anosov, so as  to ensure that  there is no  Dehn twisting around $\sigma$ when we consider $\chi$ as the restriction of $\alpha$.
We suppose given the hyperbolic surface $\Sigma$ as above and  a pseudo-Anosov map $\chi \co \Sigma^c \to \Sigma^c$ which pointwise fixes $\sigma^c$ and which is the identity in a horoball neighborhood of the cusp $\Sigma \setminus \Sigma^c$.
Then  $\chi$ induces an automorphism $\chi_*$ of $\pi_1(\Sigma^c,s_0) $, where we pick $s_0 \in \partial \Sigma^c$. 
Now identify  $R$ with $\Sigma^c$ and $\dd R = \sigma$ with $\partial \Sigma^c$.  With this identification, we insist that $(\alpha_*)_{|\pi_1(R,s_0)}= \chi_*$.

Continuing with the identification of $R$ with $\Sigma^c$,  note that the crown domain 
$F$ of the unstable lamination $\lambda^u$ of $\chi$  is an annulus  with one boundary component  $\sigma$ and the other  consisting of finitely many alternating segments of stable and unstable leaves. By taking a suitable power of $\chi$ if necessary, we can assume that these leaves map to themselves under $\chi$.

Next,  pick a lift $\tilde F$  of $F$ and a corresponding lift $\tilde \chi$ of $\chi$ which maps $\tilde F$ to itself and which is the identity on a particular lift 
$\tilde \sigma$ of $\sigma$. Let $\mu^u$ be the lift of  one of the unstable leaves bounding the lift $\tilde F$ and let $\xi^u$ be one of its endpoints in    $\Lambda_{\G}$.  From our assumption that $(\alpha_*)_{|\pi_1(R,s_0)}= \chi_*$,
it follows that $\chi(\xi^u) = \xi^u$. (Without this assumption, we might have $\chi(\xi^u) = \rho(g_{\s})^k(\xi^u)$ for some $k \in \mathbb Z$.)

The non-convergence part of Theorem~\ref{brockexample} is proved by
\begin{proposition}  \label{discont}  Let $\xi^u$ be an endpoint of a boundary leaf of $\tilde F$, and let $p \in \PP$ be an endpoint of the lift of $\sigma$  also bounding $\tilde F$. Then 
$ \hat i_{\infty}(\xi^u) =  \hat i_{\infty}(p)$, while no subsequence of the sequence 
$  \hat i_n(\xi^u)$ limits on $ \hat i_{\infty}(p)$.
\end{proposition}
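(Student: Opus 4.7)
The plan is to handle the two assertions of Proposition~\ref{discont} separately: first the identification $\hat i_\infty(\xi^u) = \hat i_\infty(p)$, and then the failure of $\hat i_n(\xi^u)$ to converge to $\hat i_\infty(p)$.

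For the identification, I would appeal to the ending lamination description of the CT-map applied to the partially degenerate end of $M_\infty$ corresponding to $R$. By Brock's construction recalled in Section~\ref{sec:partialpseudoAlimit}, this end is asymptotic to $E(\Sigma,\chi)$, where $\Sigma$ has bounded geometry outside its cusp at $\sigma$ and the ending lamination is $\lambda^u$. Regarded inside $\Sigma$ (so that $\sigma$ is now a cusp), the crown $F$ becomes a complementary region of $\lambda^u$ whose closure contains the cusp, so the ideal vertices of the lift $\tilde F$ include $\xi^u$ together with the endpoints $p_\pm$ of $\tilde\sigma$, both of which collapse onto a single parabolic fixed point. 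Theorem~\ref{bowditch-ct}, or equally its generalizations in \cite{mahan-split,mahan-elct,mahan-elct2}, then forces all the ideal vertices of $\tilde F$, and in particular $\xi^u$ and $p$, to be identified under $\hat i_\infty$.

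For the non-convergence, I would argue by contradiction, assuming $\hat i_{n_k}(\xi^u) \to \hat i_\infty(p)$ along some subsequence. In the bi-Lipschitz model $K_n$ of Section~\ref{sec:approxmodels}, the lift $\til\phi_n \co \til X \to \til K_n \subset \HHH^3$ carries the $\HHH^2$-ray from $O$ to $\xi^u$ to a quasi-geodesic in $\HHH^3$ terminating at $\hat i_n(\xi^u)$, and the Morse lemma then places the hyperbolic geodesic $[O,\hat i_n(\xi^u)]$ within bounded Hausdorff distance of this quasi-geodesic. Because the $\HHH^2$-ray eventually enters a single lift of $R$ and remains there, its image lies in $R\times\{0\} \subset \dd K_n^-$, which meets $T_n$ only along $\sigma\times\{0\}$; hence after an initial bounded segment near $o_n^-$, the projection of $[O,\hat i_n(\xi^u)]$ to $K_n$ stays a definite, $n$-independent distance from $T_n$. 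On the other hand, if the hypothesized convergence held, initial segments of arbitrary length of $[O,\hat i_{n_k}(\xi^u)]$ would track those of $[O,\hat i_\infty(p)]$, which projects in $M_\infty$ ever more deeply into the parabolic cusp at $\rho_\infty(g_\sigma)$. Using the geometric convergence $M_n\to M^g\supset M_\infty$ discussed in Section~\ref{sec:limitmodels}, any prescribed horoball about this cusp corresponds, for large $n$, to a neighborhood of $T_n$ of arbitrarily large depth, so $[O,\hat i_{n_k}(\xi^u)]$ would have to enter $T_{n_k}$ arbitrarily deeply, contradicting the conclusion of the previous sentence.

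The main obstacle will be making the position of the projected quasi-geodesic relative to $T_n$ precise with constants uniform in $n$, particularly in the sub-case in which $\mu^u$ shares an ideal endpoint with $\tilde\sigma$ and so spirals close to $\tilde\sigma$ at one end. The essential input is that the combinatorial structure of the crown, and hence the topological position of $\mu^u$ in $R$, is independent of $n$, so the portion of $\mu^u$ entering any fixed-depth neighborhood of $T_n$ in the model has bounded combinatorial length.
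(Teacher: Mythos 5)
Your treatment of the first assertion, $\hat i_\infty(\xi^u)=\hat i_\infty(p)$, matches the paper: both reduce to Theorem~\ref{bowditch-ct} applied to the degenerate end with ending lamination $\lambda^u$, and this part is fine.

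The non-convergence argument, however, breaks at its very first step. You assert that $\til \phi_n$ carries the $\HHH^2$-ray $[O,\xi^u)$ to a quasi-geodesic of $\HHH^3$ (implicitly with constants uniform in $n$) and then invoke the Morse lemma to place $[O,\hat i_n(\xi^u))$ near it. This is false, and its failure is the whole point of the example: the lower boundary $\dd K_n^-$ is \emph{not} uniformly quasi-isometrically embedded in $\til K_n$ as $n\to\infty$. By \eqref{lengths2}, a segment of the unstable leaf $\mu^u$ of intrinsic length comparable to $c^n$ on $X_R\times\{0\}$ has uniformly bounded length in $K_n$ --- this is precisely the violation of UEP exhibited in Section~\ref{sec:nonUC} --- so $\til\phi_n$ restricted to the tail of $[O,\xi^u)$ is exponentially contracting, and the geodesic realization of a bottom-surface ray in fact travels the short way around $\til T_n$ and then runs along the \emph{top} boundary (see the caption of Figure~\ref{fig:geodrealzn}). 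Your subsequent claim that $[O,\hat i_n(\xi^u))$ stays an $n$-independent distance from $T_n$ therefore has no support; the paper only establishes the weaker (and correct) statement that the geodesic exits $T_n$, if it enters at all, within bounded distance of $O$.

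The missing idea is Lemma~\ref{leafcontracts}. Because the boundary leaf $\mu^u$ of the crown is $\chi$-invariant (after the normalization of \ref{sec:noncnvg}), one has $h_n(\xi^u)=\xi^u$, so $\hat i_n(\xi^u)$ can instead be reached by a ray along $\mu^u$ in the \emph{top} surface $\dd K_n^+$; there the singular-solv model shows vertical projection contracts onto unstable leaves, making this ray a quasi-geodesic of $\til K_n$ with constants uniform in $n$. Prefixing it with the length-one arc $\upsilon_n$ around $\dd T_n$ gives a uniform quasi-geodesic from $O$ to $\hat i_n(\xi^u)$, and the conclusion then comes not from your geometric-limit/horoball penetration argument but from the elementary angle estimate of Lemma~\ref{bangle}: this quasi-geodesic and the ray $[O,\hat i_n(p))$ make an angle at $O$ bounded away from $0$, so the visual distance between $\hat i_n(\xi^u)$ and $\hat i_n(p)$ is bounded below, while $\hat i_n(p)\to\hat i_\infty(p)$ by algebraic convergence since $p$ is a fixed point of a conjugate of $g_\sigma$. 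Your contradiction scheme is in the right spirit, but without the top-surface quasi-geodesic there is nothing to set against the hypothesized convergence.
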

\begin{proof} The statement that $ \hat i_{\infty}(\xi^u) =  \hat i_{\infty}(p)$  follows from Theorem \ref{bowditch-ct} since $\lambda^u$ is the ending lamination of $M_{\infty}$. The statement that  no subsequence of the sequence 
$  \hat i_n(\xi^u)$ limits on $ \hat i_{\infty}(p)$  is Corollary~\ref{unifqgeod1} which we prove below.
\end{proof}

To prove  Corollary~\ref{unifqgeod1}, we will construct, for each $n$, a quasi-geodesic    in the lift  $\til K_n$ of $K_n$ which passes through the basepoint $O   \in \til { K_n}$, and
with endpoints $ \hat i_n(\xi^u) $ and $ \hat i_{n}(p)$.
These quasi-geodesics will be uniform in $n$ and the result will follow.

We want to consider  $\xi^u \in \Lambda_{\G}$ as a point in the limit set $\Lambda^n_{\G}$ of the surface $
\alpha^n(X)$.  To do this, denote by $\HHH^2_n$ the universal cover of the surface $
\alpha^n(X)$,  with   basepoint $\tilde s_0 = \tilde \chi^n(\tilde s_0)  \in \tilde \sigma $. The map $\tilde \alpha^n\co \HHH^2 \to \HHH_n^2$ extends to  a homeomorphism
$h_n  \co \Lambda_{\G} \to \Lambda_{\G}^n$.
Since $\tilde \alpha^n(\mu^u) = \mu^u$, 
it follows that $h_n(\xi^u) = \xi^u$.

 In $\HHH^2_n$, let $ P_n$ be the foot of the  perpendicular from  $O = \tilde s_0$ to $\mu^u$ and consider the path $\beta_n$ which
follows the perpendicular from  $O$ to $P_n$  and then follows
$\mu^u$ from $P_n$ to its endpoint $\xi^u$. The segment  from  $O$ to $P_n$  has length  bounded independent of $n$ since outside the thin part of $X$, the diameter of the plaque $\tilde F$ is bounded. Hence $\beta_n$ is 
quasi-geodesic  in $\HHH^2_n$.

The marking of $K_n$ is given by the embedding  $\phi_n    \co  (S, s_0) \to  (\dd K_n^-, o_n^-)$ which lifts to $\til {\phi_n} \co (\HHH^2,O) \to (\widetilde K_n,O)$.  This extends to the map $ \hat i_n \co \Lambda_{\G} \to \Lambda_n$, where    $\Lambda_n$ is the limit set of $G_n$.
On the other hand, the
upper boundary  $\dd K_n^+ $ of $ K_n$  is marked by the map $\phi_n^+ = \phi_n  \circ \chi^{-n} $ whose lift $\til {\phi_n^+}  \co (\HHH^2_n,O) \to (\til {\dd K_n^+},O_n^+)$  extends to
 a map $q_n \co \Lambda^n_{\G} \to \Lambda_n$. Clearly, $q_n \circ  h_n = \hat i_n$, so in particular, $q_n(\xi^u)= \hat i_n (\xi^u)$.

\begin{lemma} \label{leafcontracts} The path $\til {\phi_n^+}(\beta_n)$ from $O_n^+$ to $\hat i_n (\xi^u) $
is quasi-geodesic  in $\widetilde  K_n$, with constants uniform in $n$.
\end{lemma}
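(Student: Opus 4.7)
The plan is to deduce the claim from the ladder technology of~\cite{mahan-split}, applied in the split-geometry block $B_n = \Sigma^c \times [0,n]$ which models the $R$-part of $K_n$.

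First I would verify that $\beta_n$ is itself a uniform quasi-geodesic in $(\HHH^2_n, d_{\HHH^2})$. This is immediate: the segment from $O$ to $P_n$ has length bounded by the diameter of $\til F$ outside horoball neighborhoods of $\til \sigma$ --- a quantity depending only on $\epsilon_0$ and the topology of $R$ --- while the remainder is the hyperbolic geodesic ray along $\mu^u$. Since $\mu^u$ lies over $R$, the image $\til \phi_n^+(\beta_n)$ lies (apart from a bounded initial part) in $\widetilde B_n$, which carries the split-geometry structure of Theorem~\ref{thm:splitgeom}. Crucially, by~\cite{mahan-split} Proposition 4.23, the graph quasi-convexity constant depends only on the topology of $\Sigma^c$ and is therefore independent of $n$.

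Following~\cite{mahan-split} Section 5, I would then build a ladder $\mathcal L_{\beta_n} \subset \widetilde B_n$ by flowing $\til\phi_n^+(\beta_n)$ downward through the level surfaces of the split block decomposition. By~\cite{mahan-split} Corollary 5.8 the ladder is uniformly quasi-convex in the graph metric $d_{graph}$ on $\widetilde K_n$, with constants depending only on the topology. A direct computation using \eqref{lengths2} shows that arcs along unstable leaves grow by a factor of $c$ per downward level in the piecewise metric on $B_n$, so the top slice $\til\phi_n^+(\beta_n)$ is the coarse minimizer of arc length among horizontal slices of $\mathcal L_{\beta_n}$. Combining the quasi-convexity with this minimizing property yields a uniform coarsely Lipschitz retraction from $\widetilde K_n$ onto $\til\phi_n^+(\beta_n)$, which forces $\til\phi_n^+(\beta_n)$ to be a $d_{graph}$-quasi-geodesic with uniform constants.

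Finally, applying the passage from electrified to ambient metrics as in~\cite{mahan-split} Section 6, I would upgrade this to a hyperbolic quasi-geodesic in $\widetilde K_n$: since $\til\phi_n^+(\beta_n)$ is disjoint from the Margulis tube $T_n$ away from a bounded initial segment near $\sigma$, the electro-ambient quasi-geodesic it determines is quasi-geodesic in the ambient hyperbolic metric via Lemma~\ref{ea-genl}, with constants depending only on the topology and hence uniform in $n$. The hard part will be justifying the coarse retraction onto the top slice: although exponential contraction of unstable leaves under ascent is classical, converting this into uniform estimates in the piecewise-defined graph metric, while carefully tracking the interaction of the bounded initial portion of $\beta_n$ with $T_n$, requires some bookkeeping; however the relevant estimates in~\cite{mahan-split} all depend only on the topology of the underlying surface and hence transfer uniformly to the sequence $K_n$.
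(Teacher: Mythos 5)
Your overall strategy is workable and is close in spirit to the alternative proof the paper itself mentions in passing (modelling $\widetilde K_n$ as a tree of hyperbolic spaces as in~\cite{mahan=jdg}), but the central step is asserted rather than proved. The ladder machinery of~\cite{mahan-split} gives a uniformly coarsely Lipschitz retraction of $\widetilde K_n$ onto the \emph{whole} ladder $\mathcal L_{\beta_n}$, not onto its top slice; and quasi-convexity of the ladder together with the observation that the top slice coarsely minimizes arc length does not, by itself, produce a coarsely Lipschitz retraction onto that single slice. (If it did, every horizontal slice of every ladder would be a quasi-geodesic, which is false in general --- that is precisely why~\cite{mahan-split} works with ladders rather than individual slices.) What actually makes the top slice special here is that $\mu^u$ is an unstable leaf, so that \emph{vertical projection upward restricted to $\mu^u\times[0,n]$ is a contraction}; this is the missing ingredient, and you have the right raw material for it in your \eqref{lengths2} computation, but you never convert it into the needed Lipschitz estimate for a globally defined projection.

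The paper supplies exactly this piece by a direct metric computation rather than via split geometry: on the model $\widetilde E_n$ one has the singular metric $ds^2 = dt^2 + c^{-2t}\,dx^2 + c^{2t}\,dy^2$, where $dx$ measures length along unstable leaves. The retraction onto $(\mu^u,n)$ is defined by first projecting horizontally within each level surface $\widetilde Y_m$ onto $(\mu^u,m)$ and then projecting vertically to $(\mu^u,n)$; the second map contracts because $c^{-2t}dx^2$ decreases in $t$, so the composite is a coarse contraction, and Bowditch's criterion (\cite{bowditch-stacks} Lemma 4.2) then gives the quasi-geodesity of the top copy of $\mu^u$ with uniform constants. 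The partially pseudo-Anosov nature of $\alpha$ and the presence of the $L$-side are handled by electrocuting $C_n$ and the Margulis tube $T_n$ and invoking Lemma~\ref{ea-genl}, much as in your last paragraph. Note also that invoking the full split-geometry apparatus (Theorem~\ref{thm:splitgeom}, graph quasi-convexity of split components) is unnecessary here: the blocks $\Sigma^c\times[i,i+1]$ of $B_n$ are all isometric, so the end has bounded geometry and the only electrocution needed is of $C_n$ and $T_n$. If you repair the retraction step along the lines above, your argument goes through.
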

\begin{proof} 

The segment of $\til {\phi_n^+}(\beta_n)$ from $O_n^+ $ to $\til {\phi_n^+}(P)$ has uniformly bounded length, so it is sufficient to show that $\til {\phi_n^+}(\mu_n)$ is uniformly quasi-geodesic  in $\widetilde K_n$.

Suppose first that we were dealing with the case of a pseudo-Anosov map $\chi$ on a punctured hyperbolic surface $Y$, so that the stable and unstable laminations $\lambda^s, \lambda^u$ of $\chi$  fill up $Y$.
Let $dx$ denote the transverse measure to $\lambda^s$ and 
$dy$ denote the transverse measure to $\lambda^u$, so that  $dx$  measures length along unstable leaves and $dy$  measures length along stable leaves. 
As in~\ref{sec:pseudo}, this defines a singular metric  on $Y$ which for brevity we  write as $ds^2 = dx^2 + dy^2$. Since $\chi$ expands along stable leaves, that is in the $y$-direction, the metric on 
$\chi^n(Y)$ is given by $ds^2 = c^{-2n}dx^2 + c^{2n}dy^2$.  The same formula defines a singular metric on the universal cover $\HHH^2$.

Restricting the model end $E =  Y \times [0,\infty)$  in~\ref{sec:pseudo} to $E_n=   Y \times [0,n]$  provides a model for the convex core of $Q(\chi^n(Y),Y)$.  We have obvious maps which embed $Y$ and $\chi^n(Y)$   in $E_n$ as pleated surfaces $Y_0 = Y \times \{0\},Y_n = Y \times \{n\}$ respectively. Passing to  universal covers,
as in \cite{minsky-rigidity}, see also \cite{CT},    the 
 metric in $\widetilde E_n$ is modelled by $ds^2 = dt^2 + c^{-2t} dx^2 + c^{2t} dy^2$, where $t$ is the `vertical' coordinate in the second factor.
 Thus the map  which projects $\widetilde E_n$ `vertically' upwards to $\widetilde Y_n  = \til Y \times \{n\}$ is a contraction when restricted to $\mu^u \times [0,n]$, where as above $\mu^u$ is a boundary leaf of $\lambda^u$.  Hence projecting from $\widetilde E_n$ to $(\mu^u,n)$ by first projecting `horizontally' in the surface $\widetilde Y_m$ to $(\mu^u,m)$ and then `vertically' to 
  $(\mu^u,n)$  is a contraction, from which the result (that a leaf of the unstable lamination on the top surface $\til Y_n$ is quasi-geodesic) follows by standard methods, see for example Bowditch \cite{bowditch-stacks} Lemma 4.2.  
 
  In the present case the model is somewhat more complicated because  the automorphism $\alpha$ of the underlying surface $S$ is partially pseudo-Anosov and $K_n$ limits on a partially degenerate end of $M_{\infty}$. 
  However we can apply the above argument working   in the space in which we electrocute  the left hand half $C_n = L \times [0,n]$, together with the Margulis tube $T_n$ around $\sigma_n$.

  An equivalent proof can be constructed  by modelling $\widetilde K_n$ as a tree of hyperbolic metric spaces as in ~\cite{mahan=jdg}. \end{proof}

\begin{center}
\begin{figure}
\includegraphics[height=10cm]{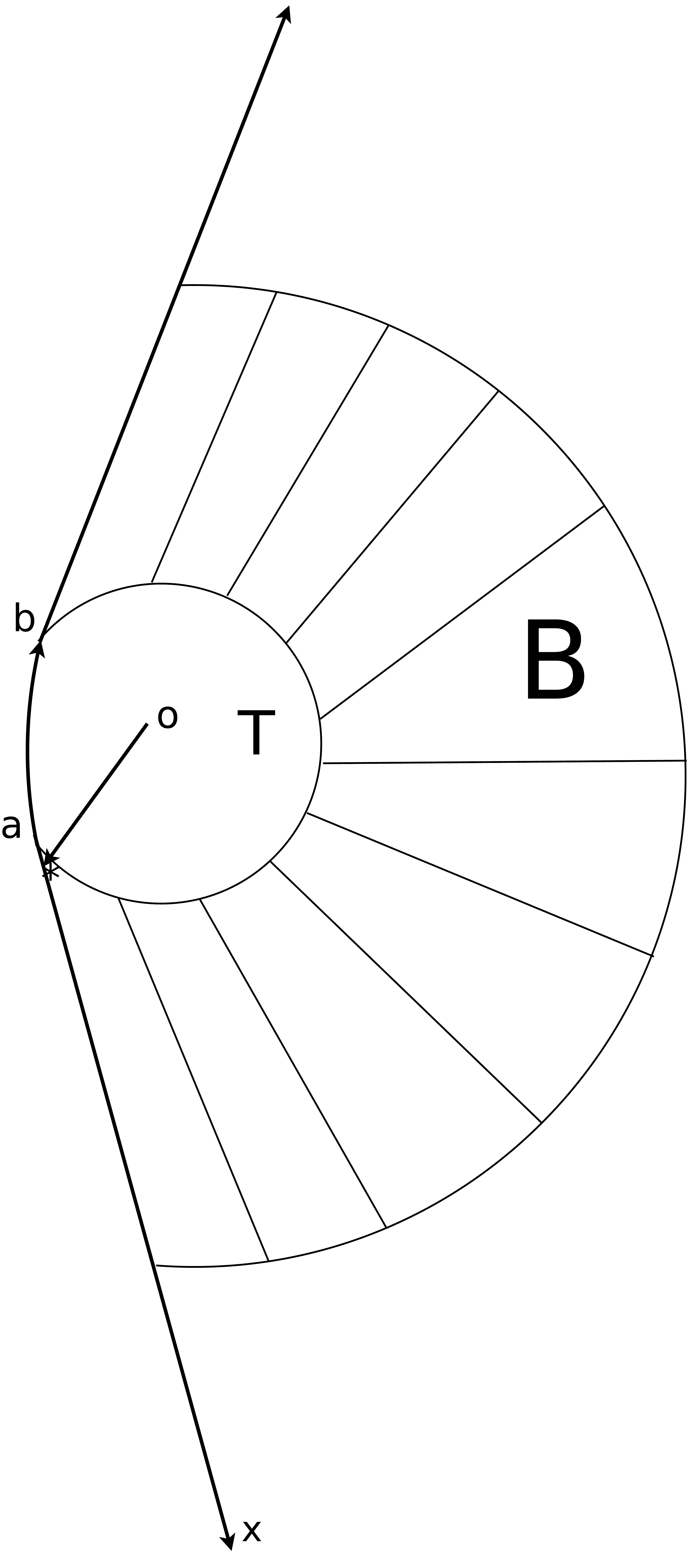} 
\caption{Geodesic realizations. The ray from $a =O_n^-$ to $x$ (the image of a geodesic ray in $\til  S$) lies on the lower boundary $ \partial \til K_n^-$ while its geodesic realization travels the short way round the (lifted) Margulis tube $\til  T$ to $b= O_n^+$ and thence runs along the upper boundary $\partial \til K_n^+$.  The endpoints of the two rays coincide in $\dd \HHH^3$.} 
\label{fig:geodrealzn} \end{figure}
\end{center}

Next, we modify the path $\til {\phi_n^+}(\beta_n)$ of the previous lemma to a quasi-geodesic  path from $O = O_n^-$ with the same endpoint $\hat i_n (\xi^u) \in \Lambda_n$, by prefixing it with  the path $\upsilon_n$  from $O_n^-$ to $O_n^+$ which goes the `short' way round $\dd T_n$ in $C_n$  as in section~\ref{sec:nonUC}. Since $\upsilon_n$ has uniformly bounded length $1$, the resulting path $\delta_n$ is a  $\til K_n$- quasi-geodesic from $O$ to $\hat i_n (\xi^u)$.
It follows that the geodesic ray from $O$ to $\hat i_n (\xi^u)$ either lies completely outside $T_n$, or enters $T_n$ only to exit at a point $O'$ a uniformly bounded distance from $O$.

\begin{lemma} \label{bangle} Let $p \in \mathcal P$ be as in the statement of Proposition~\ref{discont}.
 Let $\gamma_n$ be the hyperbolic ray from $O$ to $\hat i_n (p)$, and let $\delta_n$ be as above. Then the angle at $O$ between 
$\gamma_n$ and $\delta_n$ is uniformly bounded away from $0$.\end{lemma}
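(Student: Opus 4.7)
My plan is to compare the directions in which $\gamma_n$ and $\delta_n$ leave $O$, exploiting the geometry of the Margulis tube $\widetilde T_n \subset \widetilde K_n$. The key observation is that $\gamma_n$ leaves $O$ in an ``axial'' direction relative to the tube cylinder $\partial \widetilde T_n$, while $\delta_n$ begins with the ``meridional'' segment $\widetilde\upsilon_n$; these two directions are orthogonal in $T_O\HHH^3$, and bounded quasi-geodesic perturbations preserve a positive angular gap.

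First, I would show that the tangent to $\gamma_n$ at $O$ lies in the totally geodesic $2$-plane $\Pi$ through $O$ containing the axis of $\rho_n(g_\sigma)$. This is automatic: $\hat i_n(p)$ is an endpoint of that axis, so $\hat i_n(p)\in\partial\Pi$ and hence $\gamma_n\subset\Pi$. The meridional direction at $O$ on $\partial\widetilde T_n$ is perpendicular to $\Pi$, so the tangent to $\gamma_n$ at $O$ has no meridional component. Second, $\widetilde\upsilon_n$ is the lift of $\upsilon_n(t)=(s_0,t)$, $t\in[0,1]$, lying in $\partial L\times[0,1]\subset\partial T_n$, which is transverse to the longitude $\sigma$ of the tube; hence its tangent at $O$ is purely meridional, and in particular perpendicular to the tangent of $\gamma_n$. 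Third, to pass from the tangent of $\widetilde\upsilon_n$ at $O$ to that of the hyperbolic geodesic $[O,\hat i_n(\xi^u)]$, I combine the uniform quasi-geodesic constants of $\delta_n$ (Lemma~\ref{leafcontracts} together with the bounded prefix $\widetilde\upsilon_n$) with the Morse stability lemma: the geodesic $[O,\hat i_n(\xi^u)]$ lies within a uniform Hausdorff distance $D$ of $\delta_n$, and in particular passes within distance $D$ of $O_n^+=\widetilde\upsilon_n(1)$. Provided $d_{\HHH^3}(O,O_n^+)$ is bounded below uniformly in $n$, the tangent direction of $[O,\hat i_n(\xi^u)]$ at $O$ differs from the direction to $O_n^+$ by at most $\arctan(D/d_{\HHH^3}(O,O_n^+))<\pi/2$. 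Combining this with the orthogonality in the first two steps yields a uniform positive lower bound on $\angle(\gamma_n,\delta_n)$ at $O$.

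The main obstacle is verifying the lower bound $d_{\HHH^3}(O,O_n^+)\gtrsim 1$: as $n\to\infty$ the tube $\widetilde T_n$ degenerates toward a rank-one cusp, and one must rule out $O_n^+$ collapsing to $O$ in $\HHH^3$. For this one uses that $\widetilde\upsilon_n$ traverses the $C_n$-side of $\partial T_n$, of \emph{intrinsic} Euclidean width $1$ (in contrast to the $B_n$-side of width $n$), so that via hyperbolic trigonometry on the Euclidean cylinder $\partial\widetilde T_n$ --- whose embedding in $\HHH^3$ is controlled by the bi-Lipschitz model of $K_n$ from Section~\ref{sec:approxmodels} --- the ambient hyperbolic distance stays comparable to the intrinsic length $1$, uniformly in $n$. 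A minor point is that $O$ lies on $\partial\widetilde T_n$ only up to a bounded perturbation coming from the bi-Lipschitz identification, but this is harmless for the angular estimate since it only shifts tangent directions by a bounded amount.
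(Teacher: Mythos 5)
Your claims (1) and (2) are correct as far as they go: $\gamma_n$ lies in the totally geodesic plane $\Pi$ through $O$ containing the axis of $\rho_n(g_{\sigma})$, and the meridian of $\partial \til T_n$ at $O$ is orthogonal to $\Pi$. But the argument breaks at step (3), and the orthogonality you set up in (1)--(2) is not the relevant one. To make your scheme work you would need a \emph{uniform lower bound on the meridional component} of the initial tangent of the geodesic $[O,\hat i_n(\xi^u))$, i.e.\ its angle with the meridian bounded away from $\pi/2$. The fellow-travelling estimate cannot deliver this: the statement ``$[O,\hat i_n(\xi^u))$ passes within the Morse constant $D$ of $O_n^+$'' is vacuous here, because $d_{\HHH^3}(O,O_n^+)$ is uniformly bounded \emph{above} (the meridional arc $\upsilon_n$ has length $1$), whereas $D$, the stability constant of the quasi-geodesics $\delta_n$, is in general much larger than $1$; every geodesic ray from $O$ then passes within $D$ of $O_n^+$, so no angular information is obtained. (Even when $D<d$ the correct bound is governed by $\sinh D/\sinh d$, not $\arctan(D/d)$, but that is secondary.) Worse, the intermediate claim itself is doubtful: nothing prevents $[O,\hat i_n(\xi^u))$ from leaving $O$ in the longitudinal or outward-normal direction of $\partial\til T_n$, both of which are orthogonal to the meridian, so its meridional component need not be bounded below --- and the lemma would still be true in those cases.

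The real danger, which your argument never confronts, is that $\gamma_n$ leaves $O$ pointing almost along the \emph{inward normal} to $\partial\til T_n$: since the tube radius tends to infinity, to reach an endpoint of the axis from $O\in\partial\til T_n$ the geodesic first heads nearly straight at the axis. What must therefore be excluded is that $[O,\hat i_n(\xi^u))$ also plunges nearly normally into the tube. This is exactly where the fact recorded just before the lemma --- that $[O,\hat i_n(\xi^u))$ either misses $\til T_n$ or exits it within a uniformly bounded distance of $O$, because $\delta_n$ is a uniform quasi-geodesic crossing the convex tube in a segment of bounded length --- has to be used: a geodesic entering the convex equidistant tube nearly perpendicularly through its nearly flat, large-radius boundary travels a long way before exiting. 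That is the content of the paper's proof (in the upper half-space model, $\partial\til T_n$ is a cone making angle $\theta_n\to 0$ with the horizontal, $\gamma_n$ makes angle $2\theta_n$ with the vertical, and the geodesic replacing $\delta_n$ makes angle with $\partial\til T_n$ bounded away from $\pi/2$). Your step (3) must be replaced by such a penetration estimate; as written the proof has a genuine gap. A further, smaller, issue: the bi-Lipschitz identification of the model $K_n$ with the convex core does not control tangent directions pointwise, so ``purely meridional in the model'' does not transfer to an angle statement in $\HHH^3$ without extra work.
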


\begin{proof} First   consider first the limiting case in which $g_{\sigma}$ is parabolic.
After normalizing and 
working in the upper half space model $ \HHH^3$, we may assume we are in the following situation. Let $O \in \HHH^3$ be a fixed base point at Euclidean height $1$ above the base plane.
Suppose that $A \in \Isom \HHH^3$ is a parabolic  fixing $  \infty$. Suppose that $H$ is the height $1$ horoball at $\infty$, so that $O \in \dd H$.  
Suppose that  $\delta $ is a geodesic ray from $O$ which either lies completely outside $H$, or which enters $H$ and leaves it again at a point $O'$ at distance at most $k$ from $O$. Let $\gamma$ be the ray from $O$ to $\infty$.
Then the angle $\alpha$  between $\delta$ and  $\gamma$ at $O$ is bounded away from $0$; precisely  $2|\cot \alpha| \leq k'$, where $k'$ is the Euclidean bound on distance corresponding to the hyperbolic distance $k$.

Now we extend to the case of loxodromics of short translation length.
Working in the upper half space model $ \HHH^3$, let
$A_n \in \Isom \HHH^3$ be a loxodromic  fixing $  \infty$. Suppose that the translation length  $\ell(A_n)  \to 0$ as $n \to \infty$. Let $T_n$ be a constant distance cone around $\Ax A_n$, chosen so that the translation length of $A_n$ restricted to $\dd T_n$ is  a fixed length $\epsilon_0$.  Let $O \in \HHH^3$ be a fixed base point normalized to be at height $1$ and assume that $O \in \dd T_n$. Let $a_n$ be the other end point of $\Ax A_n$. Let $\gamma_n$ be the geodesic ray from $O$ to $a_n$ and let $\delta_n$ be another geodesic ray from $O$ which either lies completely outside $T_n$, or which enters $T_n$ and leaves it again at a point $O'$ at hyperbolic distance at most $k$ from $O$, where $k$ is bounded independent of $n$. 
We want to show that  the angle between $\delta_n$ and $\gamma_n$ at $O$ is bounded away from $0$.

Let $\theta_n$ be the angle between the sides of the cone $\til T_n$ and the horizontal.  Since $\ell(A_n)  \to 0$ while the translation length of $A_n$ restricted to $\dd \til  T_n$ is fixed,  $\theta_n \to 0$  as $n \to \infty$. 
Now the angle between $\gamma_n$ and the vertical at $O$ is $2 \theta_n$.
On the other hand, as is easy to compute, the angle between $\dd \til T_n$ and $\delta_n $ at $O$ is uniformly bounded away from $\pi/2$. Since $\dd \til  T_n$ is nearly horizontal, this proves the result.\end{proof}

\begin{cor} \label{unifqgeod1} No subsequence of the sequence 
$  \hat i_n(\xi^u)$ limits on the point $ \hat i_{\infty}(p)$.
\end{cor}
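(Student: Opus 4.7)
The plan, as indicated just before the statement, is to construct for each $n$ a uniform quasi-geodesic in $\HHH^3$ that passes through the basepoint $O$ and whose two boundary endpoints are $\hat i_n(p)$ and $\hat i_n(\xi^u)$. The contradiction will then come from observing that, under the hypothesis that some subsequence $\hat i_{n_k}(\xi^u)\to\hat i_\infty(p)$, the geodesic joining these endpoints retreats all the way to the boundary, while $O$ lies on the quasi-geodesic at uniformly bounded distance from that geodesic.

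For the construction, I would concatenate the geodesic ray $\gamma_n^{-1}$ from $\hat i_n(p)$ back to $O$ with the quasi-geodesic $\delta_n$ from $O$ to $\hat i_n(\xi^u)$ built in the preceding paragraphs. Lemma~\ref{leafcontracts}, together with the fact that $\upsilon_n$ has length $1$, ensures $\delta_n$ is a $(K,C)$-quasi-geodesic in $\widetilde K_n$ with constants independent of $n$; since $\widetilde K_n$ sits (up to uniform bi-Lipschitz distortion) inside $\HHH^3$, the same curve is a uniform quasi-geodesic of $\HHH^3$. Lemma~\ref{bangle} provides a uniform positive lower bound $\theta_0$ on the angle at $O$ between $\gamma_n$ and $\delta_n$. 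The standard fact that in a hyperbolic space the concatenation of two quasi-geodesic rays meeting at a common point with interior angle bounded away from $0$ is again a quasi-geodesic (with constants depending only on the original constants and on the angle) then produces a uniform $(K',C')$-quasi-geodesic $\pi_n$ through $O$ with the desired two boundary endpoints.

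Next I would invoke the Morse stability lemma in $\HHH^3$: there exists $R=R(K',C')$ such that $\pi_n$ lies in the $R$-neighborhood of the hyperbolic geodesic $[\hat i_n(p),\hat i_n(\xi^u)]$ joining its two endpoints. Since $O\in\pi_n$, this yields $d_{\HHH^3}(O,[\hat i_n(p),\hat i_n(\xi^u)])\leq R$ uniformly in $n$.

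To finish, suppose for contradiction that some subsequence $\hat i_{n_k}(\xi^u)\to\hat i_\infty(p)$. Algebraic convergence $\rho_n\to\rho_\infty$ gives $\rho_n(g_\sigma)\to\rho_\infty(g_\sigma)$; since the latter is parabolic with unique fixed point $\hat i_\infty(p)$, the loxodromic fixed points $\hat i_n(p)$ also converge to $\hat i_\infty(p)$. Both endpoints of $[\hat i_{n_k}(p),\hat i_{n_k}(\xi^u)]$ therefore converge in $\partial\HHH^3$ to a common point, and in the ball model centered at $O$ this forces the hyperbolic distance from $O$ to this geodesic to tend to infinity, contradicting the uniform bound $R$. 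The only point needing care is the uniformity in $n$ of the quasi-geodesic and Morse constants, which follows from the uniformity of $K$, $C$, and $\theta_0$; I expect this bookkeeping to be the main technical issue, but it is essentially handled by the preceding lemmas.
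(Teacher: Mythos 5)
Your argument is correct and is essentially the paper's: the paper likewise combines Lemma~\ref{leafcontracts}, the uniform angle bound of Lemma~\ref{bangle}, and algebraic convergence $\hat i_n(p)\to\hat i_\infty(p)$, phrasing the conclusion in terms of the visual angle subtended at $O$ rather than via an explicit concatenation and the Morse lemma, but these are interchangeable. The only point to tidy is that the angle at $O$ should be taken between the two genuine geodesic rays $[O,\hat i_n(p))$ and $[O,\hat i_n(\xi^u))$ (the latter inheriting the tube-avoidance property from the quasi-geodesic $\delta_n$), since an angle at the vertex of a quasi-geodesic by itself controls nothing.
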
   
\begin{proof}   By Lemma~\ref{bangle},   the visual angle subtended by
 $\hat i_n (p)$ and $\hat i_n (\xi^u)$ at $O$ is uniformly bounded below away from $0$.  Since $p$ is the fixed point of an element of $\G$, by algebraic convergence we have 
$\lim_{n \to \infty} \hat i_{n}(p) =   \hat i_{\infty}(p)$  and the result follows.
\end{proof}

\begin{rmk} {\rm
In hindsight, Proposition \ref{discont}  is perhaps not too unexpected as the paths $\tilde \phi_n^+(\beta_n)$
 live on the top sheet of the approximating manifolds $\til K_n$
and these converge to a ray  whose limit does not lie in the limit set of the original
surface subgroup $\pi_1(S)$.}
\end{rmk}

 \subsection{Pointwise Convergence}

 We shall now establish that the $CT$-maps $\hat i_n \co \Lambda_{\G} \to G_n$ of the Brock examples
converge pointwise for all points $\xi \in \Lambda_{\G}$ other than those described in Proposition~\ref{discont}. 
We do this by applying the conditions  $EP(\xi)$ (Embedding of Points)  and $EPP(\xi)$ (Embedding of Pairs of Points) for pointwise convergence from~\cite{mahan-series1}. These are essentially the criteria UEP and UEPP, relaxed so as to allow for dependence on the limit point $\xi$. 

 \subsubsection{Convergence criteria}

In~\cite{mahan-series1} we described $EP(\xi)$ and $EPP(\xi)$ in relation to a sequence of  elements $g_i \in \G$ chosen so that $g_i \cdot O$ is a quasi-geodesic in $\Gr \G$ and so that 
$g_i \cdot O \to \xi$ in the Euclidean metric on the ball model $\mathbb B \cup \dd \mathbb B$. It is easily seen  that is equivalent to replace this  with a criterion on the geodesic ray $[O,\xi)$ from $O$ to $\xi$ in the universal cover $\HHH^2$ of $X = \HHH^2/\G$, where $X \in \Teich(S)$.

\begin{definition}\label{critep}  Let $\G$ be a Fuchsian group such that $X = \HHH^2/\G$ is a closed hyperbolic surface  and let $\rho_n\co \Gamma  \to G_n$ be a sequence
 of  isomorphisms to Kleinian groups $G_n$.  Suppose given a sequence of  $(\Gamma, G_n)$-equivariant embeddings   $\tilde \phi_n \co (\HHH^2,O)  \to (\HHH^3,O)$   which induce basepoint preserving   embeddings  $  \phi_n \co X \to M_n$ with $(  \phi_n)_* = \rho_n$. Let $\xi \in \Lambda_{\G}$ and let $[O,\xi)$ be the geodesic ray in $\widetilde X = \HHH^2$ as above.  
 \begin{enumerate} \item The pair $((\rho_n), \xi)$ is said to satisfy
  $EP(\xi)$  if there
exist functions  $f_{\xi}  \co \mathbb N \to \mathbb N$ and $M_{\xi} \co \mathbb N \to \mathbb N$, with
 $f_{\xi}(N) \rightarrow\infty$ as $N\rightarrow\infty$, such that for all $x
\in [O,\xi)$ outside  $B(O, N)$ in $\HHH^2$, $\widetilde \phi_n(x)$ is outside $B(O, f_{\xi}(N))$ in $\HHH^3$, for all $ n \geq M_{\xi}(N)$. 
   
\item The pair $((\rho_n), \xi)$ satisfies $EPP(\xi)$ if there exists a function  $f'_{ \xi}(N )\co \mathbb N \to \mathbb N$ such that $f'_{\xi}(N)\rightarrow \infty$ as $N\rightarrow \infty$,
and such that for any  subsegment $[x,y] \subset [O,\xi ) $ lying outside $B(O;N)$ in $ \HHH^2$,  
the $\HHH^3$-geodesic $[\widetilde  \phi_n (x),\widetilde  \phi_n(x)]$ lies outside $B (O; f'_{\xi}(N))$  in   $ \HHH^3$ for all $n \geq M_{\xi}( N)$, where $M_{\xi}$ is as in (1). \end{enumerate}
\end{definition}

Note that in these definitions, we do not assume that $M_{\xi}(N) \to \infty$ with $N$, in fact in the best situation, $M_{\xi}(N) = 1$.   
We have:
\begin{theorem}[\cite{mahan-series1} Theorem 7.3] \label{ptwisecrit} Suppose  that   $\rho_n : \G \rightarrow \PSL$ is a sequence
of discrete faithful representations converging algebraically to $\rho_\infty \co \G \rightarrow \PSL $, and suppose the corresponding $CT$-maps $\hat i_n \co  \Lambda_{\G} \to \Lambda_{G_n}$ exist, $n = 1,2 \ldots,\infty$.
Let $\xi \in \Lambda_{\G}$.  Then
$\hat i_n (\xi) \to \hat i_{\infty}(\xi) $  as $n \to \infty$
 if  $((\rho_n);\xi)$ satisfies $EPP(\xi)$.
\end{theorem}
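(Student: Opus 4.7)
\textbf{Proof plan for Theorem~\ref{ptwisecrit}.} The plan is to work entirely in the ball model $\mathbb B$ of $\HHH^3$ with its Euclidean metric $d_{\mathbb E}$ on $\mathbb B \cup \Chat$, and to exploit the standard fact that if a hyperbolic geodesic segment in $\mathbb B$ meets the complement of $B_{\HHH}(O;R)$, then both of its endpoints lie in a Euclidean neighbourhood of some point of $\dd \mathbb B$ whose Euclidean diameter shrinks to $0$ as $R \to \infty$. I will fix $\epsilon > 0$ and aim to produce $n_0$ with $d_{\mathbb E}(\hat i_n(\xi), \hat i_\infty(\xi)) < \epsilon$ for all $n \geq n_0$.

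First, I would choose $N$ so large that any hyperbolic geodesic segment (or ray) in $\HHH^3$ lying outside $B_{\HHH}(O; f'_\xi(N))$ has Euclidean diameter less than $\epsilon/3$; this is possible since $f'_\xi(N) \to \infty$ with $N$. Next, pick $x \in [O, \xi)$ at hyperbolic distance greater than $N$ from $O$, so that every subsegment $[x,y] \subset [O,\xi)$ with $y$ further along the ray than $x$ lies outside $B(O;N)$. By $EPP(\xi)$, for every such $y$ and every $n \geq M_\xi(N)$, the hyperbolic geodesic $[\tilde\phi_n(x), \tilde\phi_n(y)]$ lies outside $B_{\HHH}(O; f'_\xi(N))$. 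Letting $y \to \xi$ along the ray and using the continuity of the Cannon--Thurston map $\hat i_n$ (which, since $X = \HHH^2/\G$ is closed, implies $\tilde \phi_n(y) \to \hat i_n(\xi)$ in $d_{\mathbb E}$ along this ray, as it stays within bounded distance of a $\G$-orbit), I conclude that the hyperbolic geodesic ray from $\tilde\phi_n(x)$ to $\hat i_n(\xi)$ lies outside $B_{\HHH}(O; f'_\xi(N))$. The same argument with $n = \infty$ gives the analogous statement for $\hat i_\infty(\xi)$.

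From the choice of $N$, I now get $d_{\mathbb E}(\tilde\phi_n(x), \hat i_n(\xi)) < \epsilon/3$ for every $n \geq M_\xi(N)$, and similarly $d_{\mathbb E}(\tilde\phi_\infty(x), \hat i_\infty(\xi)) < \epsilon/3$. Finally, algebraic convergence of $\rho_n$ to $\rho_\infty$, together with equivariance of the $\tilde\phi_n$ and the fact that $x$ lies in a bounded neighbourhood of the orbit of $O$ under $\G$, yields $\tilde\phi_n(x) \to \tilde\phi_\infty(x)$ in $\HHH^3$, hence in $d_{\mathbb E}$. Choosing $n_0 \geq M_\xi(N)$ large enough that $d_{\mathbb E}(\tilde\phi_n(x), \tilde\phi_\infty(x)) < \epsilon/3$ for all $n \geq n_0$ and applying the triangle inequality gives $d_{\mathbb E}(\hat i_n(\xi), \hat i_\infty(\xi)) < \epsilon$, as desired.

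The one subtle point, and the step that requires the most care, is justifying that $\tilde \phi_n(y) \to \hat i_n(\xi)$ in the Euclidean metric as $y \to \xi$ along the geodesic ray $[O,\xi)$. This is not quite automatic from the definition of $\hat i_n$ (which is given as a continuous extension of a map defined on the embedded Cayley graph), so the main technical task is to use cocompactness of the $\G$-action on $\HHH^2$ to fellow-travel the ray with an orbit sequence, and then appeal to continuity of $\hat i_n$ on $\Lambda_\G$. Everything else is a routine combination of the EPP bound and algebraic convergence on the fixed compact ball containing $x$.
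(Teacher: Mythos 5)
Your argument is correct and is essentially the proof of \cite{mahan-series1} Theorem 7.3 (the present paper only cites that result), transported from the Cayley-graph formulation to the geodesic ray $[O,\xi)$ exactly as the remarks preceding Definition~\ref{critep} allow: pick $x$ far along the ray, use $EPP(\xi)$ and a limiting argument in $y$ to force the rays $[\tilde\phi_n(x),\hat i_n(\xi))$ and $[\tilde\phi_\infty(x),\hat i_\infty(\xi))$ out of a large ball (hence to have small Euclidean diameter), and close with the triangle inequality. The one step to tighten is the claim that algebraic convergence gives $\tilde\phi_n(x)\to\tilde\phi_\infty(x)$: this is immediate only for orbit points $g\cdot O$, so either take $x$ within bounded distance of such a point, or observe that $EP(\xi)$ (which $EPP(\xi)$ presupposes via the function $M_\xi$) places $\tilde\phi_n(x)$, $\rho_n(g)\cdot O$, $\rho_\infty(g)\cdot O$ and $\tilde\phi_\infty(x)$ all outside $B_{\HHH}(O;f_\xi(N))$ and within uniformly bounded hyperbolic distance of one another, so that they are Euclidean-close once $N$ and then $n$ are large.
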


\subsubsection{Verifying pointwise convergence.}

As above, we take $\G = G_0$ and $\rho_n \co G_0 \to G_n$ to be the Brock examples as in Section~\ref{sec:partialpseudoAlimit}.  
Sometimes it will be important to distinguish between the surface $S$ and the hyperbolic structure $X \in \Teich(S)$. Fixing such a structure $X$, we may take the dividing curve $\sigma$ to be geodesic on $X$. The restrictions $X_R, X_L$  of $X$ to $R,L$ are hyperbolic surfaces with geodesic boundary $\sigma$. The universal cover $\til S$ of $S$ is identified with $\HHH^2$ using the lift of the structure $X$. Since $K_n$ is a quasi-isometric model for the convex core of $Q(\chi^n(X),X)$, we can  identify the universal cover $\til K_n$ of $K_n$  with a convex subset of $\HHH^3$.

The representations $\rho_n$ correspond to a sequence of embeddings $\til \phi_n \co (\HHH^2,O) \to ( \HHH^3,O)$ which descend to the maps
$\phi_n \co (S,s_0) \to (X \times \{0\}, (x_0, 0)) \subset K_n$.  
The map $\til \phi_n$ extends to the $CT$-map $\hat i_n \co \Lambda_{\G} \to  \Lambda_{n}$.  Hence  if $\xi \in \Lambda_{G_0}$,
the ray $[O,\xi) \subset \HHH^2$ maps under 
$\til  \phi$ to a path $\til  \phi([O,\xi)) \subset \HHH^3$ joining $\til \phi(O) = O$ to $\hat i_n(\xi) \in \Lambda_{n}$.
We denote the  $\HHH^3$-geodesic with these endpoints  by $[O, \hat i_n(\xi))$.
We will prove convergence $\hat i_n(\xi) \to \hat i_{\infty}(\xi)$ by checking that $(\rho_n, \xi)$ satisfies  the condition  $EPP(\xi)$.

 \subsubsection{Electric metrics}
 
For $\xi \in \Lambda_{G_0}$, there are two possibilities for the geodesic ray $[O,\xi) \subset \til S$: either it is eventually contained in a fixed  lift of  $R$, or not. In the first case, translating by an appropriate element of $\G = \pi_1(S)$ we may assume without loss of
generality that the entire ray  $[O,\xi)$ lies   in a fixed lift of  $R$.

Now consider the model manifold $K_n$ and let $D_n = B_n \cup T_n \subset K_n$.  Since Margulis tubes are convex and since the tube $T_n$ separates $B_n$ from $C_n$,   it follows that each lift $\widetilde{D_n}$ of $D_n$ is uniformly quasi-convex
in $\widetilde{K_n}$.  Hence $(\widetilde{K_n},\DD_n)$ satisfies the conditions of 
Lemma~\ref{ea-genl}, where $\DD_n$ is  the collection  of lifts $\til{D_n}$ of $D_n$. 
Let 
$d_e^n$ denote the resulting electric metric on $\widetilde{K_n}$ with the collection $\DD_n$  electrocuted.

Since the curve $\sigma$ along which we cut $X$ is geodesic, the lifts to $ \HHH^2$ of $X_R$ are convex, moreover they are clearly uniformly separated. Let $d_e^S$ denote the induced electric metric
on $\HHH^2$ with lifts of $X_R$ electrocuted.  Clearly the ray $[O,\xi) \subset \HHH^2$ has infinite length in $d_e^S$  if and only if 
it is not eventually contained in a fixed  lift of  $ R$.

\begin{lemma} \label{electricqisom}The map $\widetilde\phi_n \co (\widetilde{S}, d_e^S) \to (\widetilde{K_n}, d_e^n)$ is a 
 quasi-isometry 
 with constants which are uniform in $n$. \end{lemma}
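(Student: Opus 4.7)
The idea is that electrocuting the $R$-parts (namely $\widetilde{D_n}$ in $\widetilde{K_n}$ and lifts of $X_R$ in $\widetilde S$) in both spaces effectively collapses the geometrically interesting portion, leaving in each case a space essentially built from the $L$-pieces $X_L$, whose geometry is uniformly controlled. The map $\widetilde\phi_n$ respects this decomposition: lifts of $X_R$ in $\widetilde S$ map to the lower boundary of lifts of $D_n$, and lifts of $X_L$ in $\widetilde S$ map to the lower boundary of lifts of $C_n = X_L\times [0,1]$.

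For the upper Lipschitz bound, I would first note that $\widetilde\phi_n\co (\widetilde S,d_S)\to(\widetilde{K_n},d_{K_n})$ is uniformly Lipschitz: its image is the lower boundary $\partial\widetilde{K_n}^-$, which is a lift of a pleated surface in the uniformly bi-Lipschitz model $K_n$ with intrinsic hyperbolic structure at bounded Teichm\"uller distance from $X$. Then the crucial observation is that $\widetilde\phi_n$ sends each lift of $X_R$ into a lift of $R\times\{0\}\subset B_n\subset D_n$, so the electrocuted sets in $\widetilde S$ map into electrocuted sets in $\widetilde{K_n}$. This immediately gives the upper bound for the electric metrics with a constant uniform in $n$.

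For the lower bound, together with coarse surjectivity, I would take any $d_e^n$-quasi-geodesic from $\widetilde\phi_n(x)$ to $\widetilde\phi_n(y)$ and modify it in two stages. First, each maximal subsegment inside an electrocuted $\widetilde{D_n}$ is replaced by a path running along the boundary cylinder $\widetilde{\partial L\times[0,1]}$, which has uniformly bounded diameter; this adds only a bounded additive constant per transition. Second, the remaining subsegments lie in lifts of $C_n$, each of which is uniformly bi-Lipschitz to $\widetilde{X_L}\times[0,1]$; I project these segments vertically onto the lower boundary $\widetilde{X_L}\times\{0\}$ using the $1$-Lipschitz retraction in the product metric. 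The projected path lies entirely on $\partial\widetilde{K_n}^-$, so it pulls back to a path in $\widetilde S$ whose $d_e^S$-length is comparable to its $d_e^n$-length. Coarse surjectivity comes for free from the same vertical projection, since every point of $\widetilde{C_n}$ is within hyperbolic distance~$1$ of its image, and every point of $\widetilde{D_n}$ is at $d_e^n$-distance at most~$2$ from its boundary which abuts $\partial\widetilde{K_n}^-$.

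The main obstacle is bookkeeping in the lower bound: verifying that the bounded additive costs incurred at each transition between an electrocuted $\widetilde{D_n}$ and an adjacent $\widetilde{C_n}$ assemble into a globally controlled bound, so that the resulting path is genuinely a connected curve in $\partial\widetilde{K_n}^-$ of the right $d_e^S$-length. This requires uniform control of the diameter of the boundary cylinders $\widetilde{\partial L\times[0,1]}$ (which holds since $\ell_X(\sigma)=\epsilon_0$ is fixed) together with the uniform bi-Lipschitz product structure of $C_n$, both built into the construction of $K_n$ in~\ref{sec:approxmodels}. With these, the additive errors are uniformly bounded and the quasi-isometry constants are independent of $n$.
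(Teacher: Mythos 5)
Your overall strategy is the same as the paper's: its proof is a three-line observation that the complement of $X_L$ is electrocuted in $\widetilde{S}$, the complement of $X_L\times[0,1]$ is electrocuted in $\widetilde{K_n}$, and $X_L\times[0,1]$ has vertical thickness $1$, so the two electric spaces match up piece by piece. Your upper Lipschitz bound (electrocuted sets map into electrocuted sets) and your coarse surjectivity argument are correct.

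One justification in your lower-bound bookkeeping is wrong, although the step it supports is still true. You attribute the bounded additive cost of each transition through an electrocuted $\widetilde{D_n}$ to ``the uniformly bounded diameter of the boundary cylinders $\widetilde{\partial L\times[0,1]}$.'' In the universal cover a lift of the annulus $\partial L\times[0,1]$ is a bi-infinite strip $\mathbb{R}\times[0,1]$ stabilized by the cyclic group generated by $\rho_n(g_{\sigma})$; it does not have bounded diameter, and the entry and exit points of a quasi-geodesic on such a strip can be arbitrarily far apart in the hyperbolic metric. The correct reason each transition costs only an additive constant is that it takes place inside sets that are electrocuted on \emph{both} sides: your replacement path along the strip lies in $\widetilde{D_n}$, hence has $d_e^n$-length at most $2$, and the corresponding path in $\widetilde{S}$ runs along a lift of $\sigma$, which lies in the closure of a lift of $X_R$ and hence also has $d_e^S$-length at most $2$. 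With that substitution --- together with the uniform separation of the lifts of $D_n$, which bounds the number of such transitions by a fixed multiple of the electric length of the original quasi-geodesic --- your argument goes through with constants uniform in $n$.
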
 
\begin{proof} The map $\widetilde\phi_n$ is the lift to $\til{S}$ of the map which sends $x \in L$ to $(x,0) \in X_L \times [0,1]$ and  $x \in R$ to $(x,0) \in X_R \times [0,n]$.
The lifts of the complement of $X_L$ are electrocuted in $\widetilde{S}$ and the lifts of 
 the complement  of $X_L \times [0,1]$ are  electrocuted in $\widetilde{K_n}$.
 This result follows  since $X_L \times [0,1]$ has vertical thickness $1$ in $K_n$.
\end{proof}

\begin{cor} The geodesic ray $[O,\hat i_n(\xi)) \subset \widetilde K_n$ has infinite length in the electric metric $d_e^n$  if and only if 
the   ray $[O,\xi) \subset \til S$ is not eventually contained in a fixed  lift of  $R$.  \end{cor}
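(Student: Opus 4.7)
The plan is to track the dichotomy through four successive comparisons: (i) the ray $[O,\xi)$ inside $(\widetilde S, d_e^S)$; (ii) its image $\widetilde\phi_n([O,\xi))$ inside $(\widetilde K_n, d_e^n)$; (iii) an electro-ambient modification $\gamma_n$ of this image; and (iv) the hyperbolic geodesic ray $[O, \hat i_n(\xi))$ itself. Step (i) handles the combinatorics of crossings, step (ii) is a direct quasi-isometric transport, step (iii) is a local surgery inside electrocuted pieces, and step (iv) is the decisive geometric comparison via Lemma~\ref{ea-genl}.

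For (i), since $\sigma$ is a geodesic on $X$, the lifts of $X_R$ are uniformly separated uniformly quasi-convex subsets of $\HHH^2$, and I would observe that $\sup_t d_e^S(O,[O,\xi)(t))$ is finite if and only if the ray is eventually trapped in a single lift of $R$: if it instead crosses infinitely many lifts of $\sigma$, uniform separation forces each crossing to contribute a definite amount to $d_e^S$-distance and hence $d_e^S(O,[O,\xi)(t)) \to \infty$. The equivalence (i)$\Leftrightarrow$(ii) is then immediate from Lemma~\ref{electricqisom}, since a uniform quasi-isometry sends $d_e^S$-bounded subsets to $d_e^n$-bounded subsets and conversely.

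For (ii)$\Leftrightarrow$(iii), I would construct $\gamma_n$ by the standard recipe: replace each maximal subsegment of $\widetilde\phi_n([O,\xi))$ lying inside some $\widetilde D_n \in \DD_n$ by the hyperbolic geodesic with the same endpoints, as in the definition of electro-ambient quasi-geodesic in Section~\ref{sec:relhyp}. Because each $\widetilde D_n$ has $d_e^n$-diameter at most $2$, each substitution alters $d_e^n$-distance from $O$ along the path by only a bounded additive amount, so $\gamma_n$ has finite $d_e^n$-diameter iff $\widetilde\phi_n([O,\xi))$ does.

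For (iii)$\Leftrightarrow$(iv), I would apply Lemma~\ref{ea-genl} to finite initial subsegments of $\gamma_n$, cut at generic points outside $\bigcup\DD_n$, and pass to the limit: this yields that $\gamma_n$ is a hyperbolic quasi-geodesic lying within a uniform hyperbolic Hausdorff distance $C$ of the hyperbolic ray $[O,\hat i_n(\xi))$. Since $d_e^n \leq d_{\HHH^3}$ on $\widetilde K_n$, every point on $[O,\hat i_n(\xi))$ is within $d_e^n$-distance $C$ of a point on $\gamma_n$ and vice versa, so the two rays are simultaneously $d_e^n$-bounded or $d_e^n$-unbounded. Combining the four equivalences gives the corollary. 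The only real obstacle is ensuring that the constants from Lemma~\ref{ea-genl} remain uniform as the finite subsegments exhaust the ray; this is automatic because those constants depend only on the uniform quasi-convexity, separation and mutual coboundedness of the family $\DD_n$ in the hyperbolic space $\widetilde K_n$, which hold once and for all.
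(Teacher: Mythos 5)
Your argument is correct and follows the route the paper intends: the corollary is stated without proof immediately after the paper assembles exactly the ingredients you use (the observation that $[O,\xi)$ has infinite $d_e^S$-length iff it is not eventually trapped in a lift of $R$, the uniform quasi-isometry of Lemma~\ref{electricqisom}, and the electro-ambient comparison of Lemma~\ref{ea-genl}), and your four-step chain fills in the implicit details faithfully. The one point you gloss over is that $\widetilde\phi_n([O,\xi))$ must be an electric quasi-geodesic without backtracking before the electro-ambient surgery and Lemma~\ref{ea-genl} can be invoked; this follows from convexity of the lifts of $X_R$ in $\HHH^2$ together with Lemma~\ref{electricqisom}, exactly as the paper argues for the segments $[a,b]$ in Case~1.
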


In the light of this corollary, the property of the  ray $[O,\hat i_n(\xi)) $ having infinite $d_e^n$-length depends only on $\xi$. Thus we have two cases to consider 
depending on whether $[0, \xi )$ has finite or infinite length in the metric $d_e^S$.
In both cases, to prove convergence $\hat i_n(\xi) \to \hat i_{\infty}(\xi)$, we will verify $EPP(\xi)$.

\subsubsection{Case 1: The length of $[0, \xi )$ in the electric metric $d_e^S$ is infinite.} 

Let $\xi \in \Lambda_{\G}$. First we prove $EP(\xi)$.  
Since $[O, \xi )$ has infinite $d_e^S$-length, no tail of $[O, \xi )$ is contained  in a lift of $X_R$. Hence $[O, \xi )$ either crosses $X_L$ infinitely often, or has an infinite tail ending in a single lift of $X_L$. It follows that   there exists a proper function $f_{\xi}: \mathbb{N} \rightarrow  \mathbb{N}$ such that
if $x \in [0, \xi)$ is at distance at least $N$ from $O$ in $\widetilde{S}$, then   $d_{e}^S (O, x) \geq f_{\xi}(N)$.

By Lemma~\ref{electricqisom}, the map $\widetilde \phi_n$ is a uniform quasi-isometry with respect to 
the respective electric metrics. Hence 
$d_e^n(O,\widetilde \phi_n(x)) \geq c'f_{\xi}(N)$ for some  constant $c'>0$.  
Now any two lifts of $C_n = X_L \times [0,1]$ in $ \widetilde K_n$ are  separated by a constant $c>0$ which is also uniform in $n$. Hence  $d_{\HHH^3} (u,v) \geq c d_e^n(u,v)$ for any points $u,v \in \widetilde K_n$. Absorbing the constants $c,c'$ into the function $f_{\xi}$, we have shown that  
$d_{\HHH^3}(O,\widetilde \phi_n(x)) \geq f_{\xi}(N)$, which is just the statement $EP(\xi)$.

Now we prove $EPP(\xi)$.  
Consider a segment $[a,b] \subset [O, \xi)$  such that  $d(O,y) \geq N$ for all $y \in [a,b]$.
From the above, $d_{\HHH^3}(O, \widetilde \phi_n(y)) \geq  f_{\xi}(N)$ for all $y \in [a,b]$.
In other words,  $\widetilde \phi_n([a,b])$ lies outside $B(O, f_{\xi}(N))$ in $\widetilde K_n$.

Now replace  $[a,b] $ by the electro-ambient geodesic with the same endpoints. By Lemma~\ref{ea-genl} , this is a bounded distance from  $[a,b] $. It follows that $[a,b] $ is an electric quasi-geodesic in $(\widetilde{S}, d_{e}^S)$.  Hence by Lemma~\ref{electricqisom}, $\widetilde \phi_n([a,b])$ is an electric quasi-geodesic in $(\widetilde K_n, d_{e}^n)$ with constants which are uniform in $n$. 

By Lemma~\ref{ea-genl} again, the $d_{e}^n$-electro-ambient geodesic obtained by replacing 
 intersections of $\widetilde \phi_n([a,b])$  with the
lifts $\widetilde D_n$ by hyperbolic geodesics in  $\widetilde D_n$ with the same endpoints,  is a uniform hyperbolic quasi-geodesic  in $\widetilde K_n $ with
the hyperbolic metric.   Hence $\widetilde \phi_n([a,b])$  is a bounded distance away from  the $\HHH^3$-geodesic with the same endpoints.
This proves $EPP(\xi)$.

\subsubsection{Case 2: The length of $[0, \xi )$ in the electric metric $d_e^S$ is finite.}

  As before, let $\PP \subset \Lambda_{G_0}$ denote the set of endpoints of lifts
  of the geodesic $\sigma$
  and let $\PP_\infty \subset \Lambda_{\infty}$ be the image of these points under $\hat i_\infty$.
 We will prove
\begin{proposition} \label{ptwisecnvg} If the length of $[0, \xi )$ in the electric metric $d_e^S$ is finite and if  $  \hat i_{\infty}(\xi) \notin \PP_\infty$, then  $\lim_{N \to \infty} \hat i_{n}(\xi)
 =  \hat i_{\infty}(\xi)$.
\end{proposition}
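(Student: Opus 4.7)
The plan is to verify the criterion $EPP(\xi)$ from Definition~\ref{critep} and then invoke Theorem~\ref{ptwisecrit}. First I would observe that under the finite $d_e^S$-length hypothesis together with $\hat i_\infty(\xi) \notin \PP_\infty$, the ray $[O,\xi)$ must eventually enter and remain inside a single lift $\til R$ of the subsurface $R \subset X$: a tail lying in a lift of $L$ would force $\xi$ to be an endpoint of a lift of $\sigma$, placing $\xi \in \PP$ and hence $\hat i_\infty(\xi) \in \PP_\infty$, a contradiction. After discarding a bounded initial segment, the stabiliser $H_0 \subset G_0$ of $\til R$ is abstractly $\pi_1(R)$ and $\xi \in \Lambda_{H_0}$, with $\xi$ not an endpoint of any lift of $\sigma$.

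The structural point I would exploit is that the restricted sequence $H_n := \rho_n(H_0) \subset G_n$ converges \emph{strongly} to $H_\infty := \rho_\infty(H_0)$. The Brock pathology which obstructs strong convergence of $G_n \to G_\infty$ arises from the extra sheet appearing in the geometric limit across the Margulis tube in the direction of $C_n$, so it is localised on the $L$-side and leaves the $R$-subgroup unaffected. The quotient manifolds $H_n \backslash \HHH^3$ are uniformly bi-Lipschitz to the blocks $B_n = \Sigma^c \times [0,n]$ together with a Margulis tube around $\sigma$, and they converge geometrically to the singly degenerate manifold $H_\infty \backslash \HHH^3$ with ending lamination $\lambda^u$ and a rank-one cusp at $\sigma$. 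Since each fibre $\Sigma^c \times \{t\}$ carries the bounded geometry of $\chi^{-t}(\Sigma^c)$, the injectivity radius of $H_n \backslash \HHH^3$ outside the Margulis tube for $\sigma$ is uniformly bounded below across the sequence.

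To transfer strong convergence to a uniform statement about Cannon--Thurston maps, I would introduce a Fuchsian group $\G_R$ uniformising the finite-area surface $\Sigma$ obtained from $R$ by replacing $\sigma$ by a cusp, together with the Cannon--Thurston map $\hat k \co \Lambda_{\G_R} \to \Lambda_{H_0}$ between these two Fuchsian representations of $\pi_1(R)$. Since $\xi \notin \PP$, $\hat k^{-1}(\xi) = \{\xi'\}$ is a single, non-parabolic point of $\Lambda_{\G_R}$. Writing $\hat j_n = \hat i_n \circ \hat k \co \Lambda_{\G_R} \to \Lambda_{H_n}$, equivariance identifies $\hat j_n$ with the continuous extension of the embedding of the Cayley graph of $\G_R$ into $\HHH^3$ given by $\G_R \cong H_0 \xrightarrow{\rho_n|_{H_0}} H_n \subset G_n$, so that uniform convergence $\hat j_n \to \hat j_\infty$ immediately yields $\hat i_n(\xi) = \hat j_n(\xi') \to \hat j_\infty(\xi') = \hat i_\infty(\xi)$.

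Finally, to establish uniform convergence of the $\hat j_n$, I would adapt the bounded-geometry argument behind Theorem~\ref{thm:strong=unif2}, working in electric metrics: electrocute cusp horoballs in $\HHH^2$ (with respect to $\G_R$) and electrocute Margulis tubes together with the limit cusp in $\widetilde{H_n \backslash \HHH^3}$. In these metrics the map from $\HHH^2$ is a uniform quasi-isometry (as in Lemma~\ref{electricqisom}), and the ladder/split-geometry construction from~\cite{mahan-split}, whose quasi-convexity constants depend only on the topology of $\Sigma$ (cf.\ the discussion around Proposition~\ref{blocks}), gives a uniform-in-$n$ version of Lemma~\ref{lemma6.12}. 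Lemma~\ref{ea-genl} then converts the resulting electro-ambient quasi-geodesics back into hyperbolic quasi-geodesics, verifying the hypothesis of Corollary~\ref{unifcrit1a} for the sequence $\hat j_n$. The main obstacle I anticipate is the non-strict-type-preservation of $\G_R \to H_n$ for finite $n$ (the parabolic class at $\sigma$ in $\G_R$ is represented by a loxodromic of translation length $\ell_n \to 0$ in $H_n$); the electric framework is designed precisely to absorb this pinching, but making the constants uniform across the interpolation between the loxodromic and parabolic regimes of $\sigma$ requires the kind of careful book-keeping already developed for parabolics in~\cite{mahan-series1, mahan-split}.
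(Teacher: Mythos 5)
There is a fatal gap at the structural heart of your argument: the claim that the restricted sequence $H_n=\rho_n(\pi_1(R))$ converges strongly to $H_\infty$, and the consequent claim that the $CT$-maps $\hat j_n\co \Lambda_{\G_R}\to\Lambda_{H_n}$ converge uniformly. Both are false, and indeed are contradicted by the paper's own Proposition~\ref{discont}. The Brock pathology is not ``localised on the $L$-side'': the extra sheet in the geometric limit is a second copy of the degenerate $R$-end, and the elements witnessing the failure of UEP are $\rho_n(\chi_*^n(\gamma))$ for $\gamma\in\pi_1(R)$ --- these lie in $H_n$, have unbounded word length in $H_0$, yet move the basepoint $O$ a uniformly bounded amount because one can travel the short way round the Margulis tube through $C_n$ (Section~\ref{sec:nonUC}). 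So UEP, hence strong convergence (Proposition~\ref{uep}), already fails for the restricted sequence with the relevant normalization. Worse, if your $\hat j_n$ did converge uniformly, then since $\hat j_n=\hat i_n\circ\hat k$ with $\hat k$ surjective onto $\Lambda_{H_0}$, you would get convergence of $\hat i_n$ at \emph{every} point of $\Lambda_{H_0}$, including the crown-leaf endpoints $\xi^u$ --- precisely the points where Proposition~\ref{discont} shows convergence fails. A symptom of the problem is that your argument never uses the hypothesis $\hat i_\infty(\xi)\notin\PP_\infty$ beyond excluding $\xi\in\PP$ in the first step; but that hypothesis must also exclude the crown-leaf endpoints, which lie in $\Lambda_{H_0}\setminus\PP$ and satisfy your finite-electric-length condition.

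The paper's proof avoids this by working with a genuinely different auxiliary sequence: the strictly type preserving quasi-Fuchsian groups $F_n$ uniformizing $(\chi^n(\Sigma),\Sigma)$, in which $\sigma$ is a cusp from the outset. That sequence \emph{does} converge strongly (McMullen), so Theorem~\ref{thm:strong=unif2} gives UEPP for it; the work then consists in transferring $EP(\xi)$ and $EPP(\xi)$ back to $\rho_n$ via the uniformly bi-Lipschitz comparison $V_n\co B_n\to \Sigma^c\times[0,n]$ in the electric metrics, and it is exactly in this transfer (the claim~\eqref{claim} in Lemma~\ref{EPtransfers}) that the hypothesis $\hat i_\infty(\xi)\notin\PP_\infty$ is invoked, through Bowditch's Theorem~\ref{bowditch-ct}, to rule out the comparison geodesics being dragged arbitrarily deep into horoballs. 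Your final paragraph gestures at the right ingredients (the punctured-surface group $\G_R$, electric metrics, Lemma~\ref{ea-genl}), but the pinching of $\sigma$ from loxodromic to parabolic cannot be ``absorbed'' into a uniform convergence statement for the maps $\hat j_n$; it must be handled pointwise, with the excluded set $\PP_\infty$ appearing as the genuine obstruction.
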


This will complete the proof of Theorem~\ref{brockexample}.  This proposition is the only point at which we use Theorem~\ref{thm:strong=unif2}.

As above,  $\Sigma$ is
 a surface with the same topology as $\Int R$ but  equipped with a complete hyperbolic structure
so that the boundary   curve $\sigma$  is replaced by a puncture on $\Sigma$. 
We shall prove Proposition~\ref{ptwisecnvg} by comparison with the behaviour of $CT$-maps for the sequence of quasi-Fuchsian groups  $F_n $ 
uniformizing $(\chi^n(\Sigma), \Sigma  )$  with corresponding manifolds 
$  Q(\chi^n(\Sigma), \Sigma  )$, so that in particular, $F_0$ is Fuchsian and $\HHH^2/F_0 = \Sigma$. (Here $\chi$ is  the same pseudo-Anosov map  $\alpha_{|R}$ as above,  extended as the identity map in a neighbourhood of the puncture on $\Sigma$.)
 As in Section~\ref{sec:pseudoAlimit}, up to possibly passing to a subsequence, these groups limit on a simply degenerate group $F_{\infty}$ with corresponding manifold $M_{\chi}$ whose ending lamination is the unstable lamination of $\chi$. By~\cite{ctm-renorm} Theorem 3.12 the convergence is strong. 
Hence by Theorem~\ref{thm:strong=unif2} the $CT$-maps  $\hat k_n \co \Lambda_{F_0} \to \Lambda_{F_n}$ converge uniformly to  the $CT$-map $\hat k_\infty \co \Lambda_{F_0} \to \Lambda_{F_\infty}$. 
Thus the sequence of representations
$\bar \rho_n \co \pi_1(\Sigma) \to  F_n$ satisfies $UEPP$. As above, we can model the convex cores of the manifolds $  Q(\chi^n(\Sigma), \Sigma  )$ by the restriction $E_n = \Sigma \times [0,n]$ of the end $E$ of $M_{\chi}$. These model manifolds $E_n$ are  marked by the embedding  $\psi_n \co \Sigma \to \Sigma \times \{0\}$, which lifts to base points preserving embeddings  $\til \psi_n \co (\til \Sigma, O) \to \til {(\Sigma \times \{0\}} ,O)\subset \til E_n$.

To use the comparison between the representations $\bar \rho_n$ of $F_0= \pi_1(\Sigma)$ and
 $\rho_n$ of $G_0 = \pi_1(S)$,
we need to make definite the precise relationship  between the limit sets $\Lambda_{F_0}$ and  $\Lambda_{G_0}$.  
By definition the component $\Omega^-(G_0)$ of the regular set  of $G_0$ projects to 
the Riemann surface $X$. Let 
 $\til R_0$ be a fixed component of the lift of $R$ to  
 $\Omega^-(G_0)$, and let 
 $J \subset G_0$ be its stabiliser, with corresponding limit set $\Lambda_J \subset \Lambda_{G_0}$.  Then $\til R_0/J$  can be identified with $X_R$ so that  $J=\pi_1(R)$.
 Let $V$ be a  bi-Lipschitz homeomorphism  $X_R \to \Sigma^c$. Clearly $V$  induces an isomorphism
 $V_* \co J \to   F_0$ and  a map $\til V \co \til R_0 \to \til \Sigma^c$ which (see~\cite{mahan-series1} Theorem 4.1) extends to a corresponding $CT$-map $\hat i_V \co  \Lambda_J \to  \Lambda_{F_0}$.

 \begin{lemma} \label{EPtransfers} If $\hat i_{\infty}(\xi) \notin \PP_\infty$, then  $(\rho_n, \xi)$ satisfies $ EP(\xi)$.
\end{lemma}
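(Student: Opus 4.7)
The plan is to transfer the embedding-of-points property from the auxiliary sequence $\bar\rho_n \co \pi_1(\Sigma) \to F_n$ of Section~\ref{sec:pseudoAlimit}, for which strong convergence is known, back to $\rho_n$. The hypothesis $\hat i_\infty(\xi) \notin \PP_\infty$ is what ensures that, after translating to the auxiliary picture, the point $\eta = \hat i_V(\xi)$ is a ``good'' limit point for the auxiliary uniform convergence result, in the sense that its $\hat k_\infty$-image is not a parabolic fixed point of $F_\infty$.

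First I would reduce to the case $[O,\xi) \subset \til R_0$ for a fixed lift of $R$.  Since the $d_e^S$-length of $[O,\xi)$ is finite, some tail of the ray lies in a single lift of $X_R$; translating by a suitable element of $\Gamma = G_0$ and absorbing a bounded initial segment into the eventual error term, I may assume $[O,\xi) \subset \til R_0$, so $\xi \in \Lambda_J$ with $J \cong \pi_1(R)$, and set $\eta = \hat i_V(\xi) \in \Lambda_{F_0}$. Because $\hat i_\infty$ pinches the ending lamination $\lambda^u$ on the $R$-side in exactly the same combinatorial way as $\hat k_\infty$ does on $\Sigma$, and sends lifts of $\sigma$ to parabolic fixed points exactly as $\hat k_\infty$ sends lifts of the puncture of $\Sigma$, the assumption $\hat i_\infty(\xi) \notin \PP_\infty$ translates into: $\eta$ is not a parabolic fixed point of $F_0$, and $\hat k_\infty(\eta)$ is not a parabolic fixed point of $F_\infty$.

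Next I would invoke uniform convergence for $\bar\rho_n$. By~\cite{ctm-renorm} Theorem 3.11 the groups $F_n$ converge strongly to the singly degenerate group $F_\infty$, and $M_\chi = \HHH^3/F_\infty$ has injectivity radius uniformly bounded below outside its one cusp.  Hence Theorem~\ref{thm:strong=unif2} applies, giving uniform convergence of $\hat k_n \to \hat k_\infty$ and so, by Theorem~\ref{unifcrit1}, UEPP for $\bar\rho_n$; in particular by Proposition~\ref{uep} the sequence $\bar\rho_n$ satisfies UEP, furnishing a proper $f\co \mathbb N \to \mathbb N$ such that $d_{\HHH^2}(O_\Sigma,y) \geq N$ implies $d_{\HHH^3}(O,\til\psi_n(y)) \geq f(N)$ for every $n$ and every $y \in \til\Sigma^c$ on the ray $[O_\Sigma,\eta)$.

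Finally I would transfer this estimate via the bi-Lipschitz identification of models described in Section~\ref{sec:approxmodels}: the block $B_n = \Sigma^c \times [0,n]$ inside $K_n$ and the truncation $E_n = \Sigma^c \times [0,n]$ of the end of $M_\chi$ carry the same model metric, built from iterates of $\chi$, so passing to universal covers gives a uniformly (in $n$) bi-Lipschitz map $\Theta_n \co \til B_n \to \til E_n$ which, up to bounded discrepancy in the base-frame, conjugates $\til\phi_n|_{\til R_0}$ to $\til\psi_n \circ \til V$. Applied to $x \in [O,\xi)$ with $d_{\HHH^2}(O,x) \geq N$, UEP for $\bar\rho_n$ at $\til V(x)$ then yields $d_{\HHH^3}(O,\til\phi_n(x)) \geq f_\xi(N)$ for some proper $f_\xi$ and every $n$, i.e.\ $EP(\xi)$ with $M_\xi \equiv 1$. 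The main obstacle is verifying that the base-frame discrepancies introduced by $\Theta_n$ remain uniformly bounded in $n$: this follows because the basepoint $O$ corresponds in both $\til K_n$ and $\til M_\chi$ to a lift of $s_0 \in \sigma$ sitting on the boundary surface $\sigma^c$ of the $R$-part, so $\Theta_n$ can be arranged to match these boundaries up to a universal constant which is then absorbed into $f_\xi$.
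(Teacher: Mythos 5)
Your reduction to $[O,\xi)\subset \til R_0$ and your appeal to strong convergence of $F_n$ plus Theorem~\ref{thm:strong=unif2} to obtain UEPP, hence $EP(\bar\xi)$ for $\bar\xi=\hat i_V(\xi)$, for the auxiliary sequence $\bar\rho_n$ are exactly the paper's first steps. The gap is in your final transfer step. There is no uniformly bi-Lipschitz map $\Theta_n\co\til B_n\to\til E_n$ of the kind you describe: the map $V_n$ of Section~\ref{sec:approxmodels} identifies $B_n$ with the \emph{truncated} end $E_n^c=\Sigma^c\times[0,n]$, not with $E_n=\Sigma\times[0,n]$, and the geometries near $\sigma$ on the two sides are genuinely different --- in $K_n$ the curve $\sigma$ carries a Margulis tube $T_n$ glued to the bounded handle $C_n$, while in $E_n$ one has a rank-one cusp. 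Consequently $d_{\HHH^3}(O,\til\psi_n(\bar x))\geq f(N)$ does \emph{not} yield a lower bound on $d_{\HHH^3}(O,\til\phi_n(x))$: the geodesic $[O,\til\psi_n(\bar x)]$ in $\til E_n$ may acquire almost all of its length deep inside horoballs based at parabolic points of $F_n$, and those excursions have no metric counterpart in $\til K_n$, where the corresponding lift of $T_n\cup C_n$ can be crossed in uniformly bounded distance (this is precisely the mechanism behind the failure of UEP in Section~\ref{sec:nonUC}). A symptom of the problem is that your argument never actually uses the hypothesis $\hat i_\infty(\xi)\notin\PP_\infty$ in the transfer; as written it would apply verbatim to the crown-leaf endpoints $\xi^u$, and, run on pairs of points as in the corollary that follows the lemma, would contradict Proposition~\ref{discont}.

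What is missing is the paper's claim \eqref{claim}: writing $P_N(\bar x,n)$ for the length of $[O,\til\psi_n(\bar x)]$ outside the horoballs it meets and $Q_N(\bar x,n)$ for the number of horoballs traversed, one must show $P_N+Q_N\geq g(N)$ uniformly in $n$ large --- that is, the \emph{electric} length diverges, not merely the hyperbolic length. This is exactly where the hypothesis enters: if $P_N+Q_N$ stayed bounded, the geodesics would have to penetrate arbitrarily deeply into horoballs based at a fixed parabolic point $\eta\in\Lambda_{F_0}$, forcing $\hat k_\infty(\bar\xi)=\hat k_\infty(\eta)$; by Bowditch's Theorem~\ref{bowditch-ct} this makes $\bar\xi$ either a point of $V_*(\PP)$ or a crown-leaf endpoint, i.e.\ $\hat i_\infty(\xi)\in\PP_\infty$, contrary to hypothesis. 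Only this electric estimate transfers under $V_n$, between $\til E_n$ with horoballs electrocuted and $\til D_n$ with Margulis tubes electrocuted (the paper rigs this up by pushing paths onto the boundaries of the electrocuted sets so that $V_n$ need not be extended into the cusps); the uniform separation of lifts of $C_n$ then converts the electric lower bound in $\til K_n$ back into the hyperbolic lower bound required for $EP(\xi)$.
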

\begin{proof} 
 
Translating by
 an appropriate element of $G_0$ we may assume without loss of
generality that the geodesic ray  $[0, \xi )$ is contained in $\til R_0$ so that
$\xi \in \Lambda_{J}$.  
Write $\bar \xi = \hat i_V(\xi)$.  Since as above $\bar \rho_n$ satisfies $UEPP$, then certainly $(\bar \rho_n, \bar \xi)$ satisfies $ EP(\bar \xi)$.   Hence there is a strictly increasing function $f \co \mathbb N \to \mathbb N$ such that  if $x \in [O,\bar \xi)$  and $d(O, x) > N$ then $d_{\HHH^3}(\til \psi_n(x) , O) >  f (N)$. 

Now given $N \in \mathbb N$ and $\bar x \in [O,\bar \xi)$, consider the $\HHH^3$-geodesic segment $\lambda = [O,\til \psi_n(\bar  x) ]$ from $O$ to $\til \psi_n(\bar  x)$, and let  $\mathcal H(\lambda)$ denote the collection of horoballs traversed by $\lambda$.
Let $P_N(\bar x,n)$ be the total length of the geodesic segments in  $ [O,\til \psi_n(\bar x) ] \cap \HHH^3 \setminus \mathcal H(\lambda) $
and 
$Q_N(\bar x,n) =    | \mathcal H(\lambda)|  $ be the number of horoballs traversed by  $\lambda$.  We claim there exist a strictly increasing  function $g   \co \mathbb N \to \mathbb N$ and   $M_N \in \mathbb N$ such that 
\begin{equation}\label{claim}
P_N(\bar x,n) + Q_N(\bar x,n) \geq g (N)  \ \ \mbox{for  all}  \ \bar x \in [O,\bar \xi), \bar x \notin  B_{\HHH^3}(O,N) \ \mbox{and} \ n \geq M_N.\end{equation}

If the claim is false, then there exists $K>0$ such that for all $N$, there exist $ \bar x_N \in [O,\bar \xi)$, $\bar x_N \notin B_{\HHH^3}(O,N)$ and arbitrarily large $n  \in \mathbb N$ such that  
\begin{equation}\label{claimfalse} P_N(\bar x_N,n_N) + Q_N(\bar x_N,n_N) \leq K. \end{equation} Inductively, choose $n = n_{N } > n_{N-1}$. Then \eqref{claimfalse} implies 
in particular that there is a uniform bound to the number of horoballs traversed by 
the ray $[O,\til \psi_{n_N}(\bar x_N) ]$. By slightly adjusting constants, we can assume that each horoball is penetrated to a distance at least $a>0$ for some $a$.  Now by hypothesis
$d(O, \til \psi_{n_N}(\bar x_N)) \geq  f (N)$. Thus there can be no uniform
upper bound to the distance travelled through each horoball, in other words, we can find a sequence $H_N$ of horoballs 
 such that the length of the segment $\til \psi_{n_N}(\bar x_N) \cap H_N$ tends to infinity with $N$.
By choosing the first such horoball traversed, we can assume that $H_N$ intersects $B_{\HHH}(O,K)$. Passing to a subsequence if necessary, we may assume that all horoballs $H_N $  are based at the point $\hat k_n(\eta)$ for some fixed parabolic point $\eta \in \Lambda_{F_0}$.
Thus we can find a sequence $y_N \in [O,\bar x_N]$ such that $\til \psi_{n_N}(y_N) \in H_N$ and 
$d(O, \til \psi_{n_N}(y_N)) \to \infty$. Since the rays  $\til \psi_{n_N}([O,\bar \xi))$ converge to 
$\til \psi_{\infty}([O,\bar \xi))$ uniformly on compact subsets in $\HHH^3$  (by $UEPP$ for the sequence $\bar \rho_n$), this means that 
$\til \psi_{n_N}(y_N)) \to \hat j_\infty(\eta)$. On the other hand,  $\til \psi_{n_N}(y_N) $ is arbitrarily close in the Euclidean metric   on $\mathbb B  \cup \Chat$ to $ \hat k_{n_N}(\eta)$ for large $N$. Hence $\hat k_{\infty} (\bar \xi)  = \hat k_{\infty} (\eta) $. 

Since $\eta$ is a parabolic point in $\Lambda_{F_0}$, by Bowditch's Theorem~\ref{bowditch-ct} this means that either $\bar \xi \in V_*(\mathcal P)$ or $\bar \xi $ is the endpoint of a leaf in the crown of the unstable lamination of $\chi$. Since $\hat i_V \co \Lambda_J \to \Lambda_{F_0}$ is one-to-one  except on $\mathcal P$, the same is true of $\xi$. 
Since by assumption $\xi \notin \mathcal P$, we deduce that  $\xi$ is the end of a boundary leaf of the crown of $\chi$, which gives, 
using Theorem~\ref{bowditch-ct} again, $\hat i_{\infty} ( \xi) \in \PP_{\infty}$, contrary to hypothesis. This proves claim \eqref{claim}.

Now we will show that  claim \eqref{claim} implies that $(\rho_n,\xi)$ satisfies $EP(\xi)$.
As above, let $D_n = B_n \cup T_n$ and  let $\til D_n$ denote  the lift of  $D_n$ corresponding to $\til R_0$ above, that is, whose stabiliser is $\rho_n ({J})$. Let $\til B_n$ be the corresponding lift of $B_n$.
 The map $V$ induces an obvious uniformly bi-Lipschitz  map $V_n \co B_n = X_R \times [0,n] \to E_n^c = \Sigma^c \times [0,n]$,  where  $E_n  = \Sigma \times [0,n]$ is the model of the convex core of   $Q(\chi^n(\Sigma), \Sigma)  $ as in Section~\ref{sec:approxmodels}.
 Clearly $V_n \circ \phi_n = \psi_n \circ V$, while on the level of fundamental groups, $(V_n)_*\circ  \rho_n = \bar \rho_n \circ  V_*$  and $(V_n)_*,   V_*$ are group isomorphisms.

Since Margulis tubes are convex, it follows  as in~\cite{farb-relhyp} that  $\til D_n$
satisfies the condition of Lemma~\ref{ea-genl} relative to the collection $\TT_n$ of lifts of $T_n$ it contains, as does $\til E_n$ relative to the set of  horoballs $\mathcal H_n$ say.
Let $\til D_n^{e}, \til E_n^e$ denote the corresponding electric spaces.
Note that $V_*$ induces a bijective correspondence between $  \mathcal T_n$ and $\mathcal H_n$.

To avoid  having to define the extension of $\til V_n$ to the whole of 
$\til D_n^e$ we  proceed as follows.  Suppose that $\lambda$ is an electric quasi-geodesic in $\til D_n^e$ with endpoints in $\til B_n$.    
Replace $\lambda$ with a path $\hat \lambda$ which runs along the boundaries of the electrocuted sets in $\mathcal T_n$ as follows.  Suppose some  segment $\lambda'$ of $\lambda$ enters and leaves some $T \in \mathcal T_n$ at points $a,b$ respectively.  Replace $\lambda'$ by the segment $(a, [0,1]) \cup (b, [0,1])  \subset \partial T \times [0,1]$ of electric length $2$. 
Since the sets in $\mathcal T$ are uniformly separated, the resulting path $\hat \lambda$ is still an electric quasi-geodesic. Now extend the definition of $\til V_n$ to  a map, still denoted $\til V_n$, which sends $\partial T \times [0,1] \to \partial H \times [0,1] $ in the obvious way, where $H \in \mathcal H$ corresponds to $T \in \mathcal T$.
Using the fact that $V_n$ is uniformly bi-Lipschitz, it is easy to see that $\til V_n(\hat \lambda)$ is an electric quasi-geodesic in $\til E_n^c $, and that the number of electrocuted components traversed by $\hat \lambda$ and $\til V_n(\hat \lambda)$ is the same.

Now suppose $\xi \in \Lambda_J$ as in the statement of the Lemma.  Since $\xi \notin \mathcal P$ the ray $[O,\xi)$ is contained in the convex hull of $\til X_R \subset \til X$.
Note however that the path $\til V([O,\xi))$ may not be a quasi-geodesic  in $\til \Sigma$ as it is contained in $\Sigma^c$ and thus may skirt round the boundaries of horoballs in $\Sigma$.  We can nevertheless  work with the ray $\til V([O,\xi))$,  which ends at the point $\hat i_V(\xi) = \bar \xi$, see for example~\cite{mahan-series1} Theorem 4.1.
(We obtain a quasi-geodesic ray from  $\til V([O,\xi))$ by replacing each segment which skirts a horoball with the corresponding geodesic joining the entry and exit points, see for example~\cite{mahan-series1}  especially Lemma A.5.)

 Let $x \in [O,\xi)$ and let $\bar x= \til V(x)$.
 We want to compare the ray $[O, \til \phi_n(x) ] \subset \til K_n$ to the ray $[O, \til \psi_n(\bar x) ] \subset \til E_n $.
  Since $\til D_n$ is quasi-convex in $\til K_n$, we can after bounded adjustments assume that $ [O, \til \phi_n(x) ]  \subset \til D_n$.
Since  $\til V_n  \til \phi_n = \til \psi_n \til V $ we have $\til V_n(\til \phi_n(x) ) =  \til \psi_n (\bar x)$.
  
Replacing the electric geodesic $\lambda$ say from $O$ to $ \til \phi_n(x)$ in $ \til D_n^e$ by the corresponding electric quasi-geodesic $\hat \lambda$ as above, we see that 
$\til V_n (\hat \lambda)$ is a well-defined electric quasi-geodesic  in $ \til E_n^e$ with endpoint $\til \psi_n (\bar x)$. Moreover $\til V_n (\hat \lambda)$ has length comparable to $\hat \lambda$. Since  \eqref{claim} effectively says that the length of $\til V_n (\hat \lambda)$ in the electric metric on $ \til E_n^e$ goes to infinity uniformly with $N$  independently of $n$, the same is true of $\hat \lambda$.
This proves that $(\rho_n, \xi)$ satisfies $EP(\xi)$ and we are done.
\end{proof}

\begin{cor} If $\hat i_{\infty}(\xi) \notin  \PP_\infty$, then  $(\rho_n, \xi)$ satisfies $ EPP(\xi)$.
\end{cor}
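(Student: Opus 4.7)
The plan is to upgrade Lemma~\ref{EPtransfers} from single points to pairs, by combining its transfer to the sequence $\bar\rho_n$ with the electric-geometry argument of Lemma~\ref{ea-genl}. As in the proof of that lemma, translating by a suitable element of $G_0$ I may assume $\xi\in\Lambda_J$ and $[O,\xi)\subset\til R_0$; set $\bar\xi=\hat i_V(\xi)$. Given a subsegment $[x,y]\subset[O,\xi)$ lying outside $B_{\HHH^2}(O,N)$, its image $[\bar x,\bar y]:=\til V([x,y])$ is a curve in $\til\Sigma^c$ whose endpoints lie outside $B_{\HHH^2}(O,cN)$ in $\til\Sigma$, for some uniform $c>0$ coming from the bi-Lipschitz constant of $V$.

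Since the sequence $\bar\rho_n\co\pi_1(\Sigma)\to F_n$ converges strongly to $F_\infty$ as noted in the text, Theorem~\ref{thm:strong=unif2} yields UEPP for $\bar\rho_n$. Hence there is a function $g\co\mathbb N\to\mathbb N$ with $g(N)\to\infty$ such that the $\HHH^3$-geodesic $[\til\psi_n(\bar x),\til\psi_n(\bar y)]$ lies outside $B_{\HHH^3}(O,g(cN))$ for all sufficiently large $n$. Since $\til V_n\circ\til\phi_n=\til\psi_n\circ\til V$, the endpoints of this geodesic are exactly $\til V_n(\til\phi_n(x))$ and $\til V_n(\til\phi_n(y))$.

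To pull this conclusion back to $\til K_n$ I would invoke Lemma~\ref{ea-genl} twice. Let $\hat\lambda_n$ be obtained from the surface arc $\til\phi_n([x,y])\subset\til D_n$ by replacing each maximal excursion into a lifted Margulis tube by a bounded $\partial T\times[0,1]$ detour, exactly as in the proof of Lemma~\ref{EPtransfers}; this is an electric quasi-geodesic in $\til K_n$ with the tube collection $\TT_n$ electrocuted, with endpoints $\til\phi_n(x),\til\phi_n(y)$. The extension of $\til V_n$ to the tubes constructed there sends $\hat\lambda_n$ to an electric quasi-geodesic in $\til E_n^c$ with horoballs electrocuted. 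Applying Lemma~\ref{ea-genl} in $\til E_n^c\subset\HHH^3$, the associated electro-ambient quasi-geodesic lies within uniformly bounded $\HHH^3$-distance of $[\til\psi_n(\bar x),\til\psi_n(\bar y)]$, and hence outside a ball of radius essentially $g(cN)$. Pulling back by the uniformly bi-Lipschitz inverse of $\til V_n$, $\hat\lambda_n$ itself lies outside a comparable ball in $\til K_n$. A second application of Lemma~\ref{ea-genl}, now in $\til K_n$ with $\TT_n$ electrocuted, shows that the $\HHH^3$-geodesic $[\til\phi_n(x),\til\phi_n(y)]$ lies within bounded distance of any electro-ambient quasi-geodesic with those endpoints, in particular $\hat\lambda_n$. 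Therefore this geodesic also lies outside a ball whose radius tends to infinity with $N$, establishing $EPP(\xi)$.

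The main obstacle will be the bookkeeping of constants: ensuring both applications of Lemma~\ref{ea-genl} can be made with constants uniform in $n$, that the bi-Lipschitz transfer via $\til V_n$ does not degrade the electric-quasi-geodesic constants, and that the threshold $M_\xi(N)$ in Definition~\ref{critep} can be read off from the one already produced by UEPP for the $\bar\rho_n$. All of these uniformities are essentially already in place from the proof of Lemma~\ref{EPtransfers}, so the step is a careful extension rather than genuinely new work.
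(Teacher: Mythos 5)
Your overall strategy --- transfer to the punctured-surface sequence $\bar\rho_n$ via $V$, invoke UEPP there, and come back through the electric geometry of Lemma~\ref{ea-genl} --- is the paper's strategy, but there is a genuine gap at the pivot of your argument. You take the surface arc $\til\phi_n([x,y])$, which lies on the \emph{lower boundary surface} of $\til D_n$, modify it near the tubes, and assert that the resulting path $\hat\lambda_n$ is an electric quasi-geodesic for the metric in which the tube collection $\TT_n$ is electrocuted. This is false in general: electrocuting the Margulis tubes does not collapse the vertical direction of the block $B_n = X_R\times[0,n]$, and a geodesic arc on the bottom surface (for instance an arc of a lift of $\chi^n(\gamma)$, of length roughly $c^n$) is exponentially longer than the electric geodesic joining its endpoints, which can travel up to level $n$, across, and back down in length roughly $2n$. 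If bottom-surface geodesics were uniform electric quasi-geodesics, the Cannon--Thurston problem would be trivial. Consequently $\til V_n(\hat\lambda_n)$ is not an electric quasi-geodesic in $\til E_n^e$, and neither application of Lemma~\ref{ea-genl} in your chain is legitimate. The paper runs the transfer in the opposite direction: it starts from the hyperbolic geodesic $[\til\psi_n(\bar\lambda)]$ on the $\til E_n$ side (which UEPP for $\bar\rho_n$ places far from $O$), deduces via the electro-ambient geodesic $\mu_{ea}$ and Lemma~\ref{ea-genl} that the electric geodesic $\mu$ with the same endpoints is electrically far from $O$, and only then transports $\hat\mu$ back by $\til V_n^{-1}$ to obtain an electric quasi-geodesic $\hat\nu$ in $\til D_n^e$ with the endpoints of $\til\phi_n(\lambda)$.

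Even granting an electric quasi-geodesic $\hat\nu$ that is far from $O$ \emph{in the electric metric}, a second step is missing: one must show that the electro-ambient quasi-geodesic obtained from $\hat\nu$ is far from $O$ \emph{in the hyperbolic metric}, and the danger is that a hyperbolic geodesic segment replacing a tube crossing could swing close to $O$ even though its entry and exit points are far away. The paper handles this with Lemma~\ref{penetrates} (if one boundary point of a fat equidistant tube is hyperbolically far from $O$, the whole tube is), and your sketch omits this entirely. Finally, a point you should not fold into ``bookkeeping'': $\til V([x,y])$ is not a geodesic in $\til\Sigma$ (it may skirt horoball boundaries), and UEPP requires the whole geodesic segment, not merely its endpoints, to avoid the ball $B(O,N)$; the paper arranges this by restricting to a suitable increasing sequence of radii $N_i$ for which the relevant points lie outside horoballs.
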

\begin{proof} Continuing with the notation of Lemma~\ref{EPtransfers}, 
 let $  \lambda$ be a geodesic segment in $[O,   \xi)$ outside $B_{\HHH^2}(O,N)$
and let $\bar \lambda = \til V(\lambda)$.  
As in the previous lemma, note that $\bar \lambda$ may not be a geodesic as it is contained in $\Sigma^c$ and thus may skirt round the boundary of a horoball in $\Sigma$.  This leads to  an annoying technical issue in that it is convenient only to work with segments $\bar \lambda$ whose endpoints  lie outside the lifts of the horoball  $\Sigma \setminus \Sigma^c$. 
To fix this, note that  in Definition~\ref{critep} of condition $ EPP(  \xi)$ for convergence, it is clearly enough to check the condition  for an increasing sequence of values  $N_1< N_2 < \ldots$.  Since $\til V([O,   \xi))$ does not terminate in the cusp, we may therefore restrict to those $N_i$ for which the first $x \in \til V([O,   \xi))$ outside $B(O,N_i)$ is outside a horoball. Thus given $\bar \lambda$ as above, by extending  forwards and backwards along $\til V([O,   \xi))$  if necessary, we may  assume that its initial and final points are outside horoballs in $\til \Sigma$.

Now consider the geodesic $[\til \psi_n ( \bar\lambda)]$ in $\til E_n^e$ and let  $\mu, \mu_{ea}$ be respectively the electric geodesic and the  electro-ambient geodesic with the same endpoints, as in Section~\ref{sec:relhyp}.
By Lemma~\ref{ea-genl},  $\mu_{ea}$   is 
a bounded distance from  $[\til \psi_n (\bar \lambda)]$.  Using $ EPP( \bar\xi)$, we deduce that $  \mu_{ea}$  is outside 
$B_{\HHH^3}(O,g(N)-k)$, for some uniform $k>0$. Then using the same method as in the previous lemma, it follows that any point on $\mu$ is outside some ball $B(O,h(N))$ in the electric metric on $\til E_n^e$ for some function $h(N) \to \infty$ with $N$.  

 Now using the same trick as in the previous lemma,
replace $\mu$ with the electric quasi-geodesic $\hat \mu $ and apply the map $\til V_n^{-1}$. We obtain an electric quasi-geodesic  $\hat \nu = \til V_n^{-1}(\hat \mu)$  in $\til D_n^e$ with the same endpoints as
$\til \phi_n (  \lambda)$. Since $\til V_n^{-1}$ is bi-Lipschitz with respect to electric metrics, for any point $Q \in \hat \nu$ we have $d_e(O,Q) \succ h(N)$, where we write $X \succ Y$ to mean there is a uniform constant $c>0$ such that $X > cY$.

By Lemma~\ref{ea-genl} again, it will be enough to show that  the electro-ambient quasi-geodesic  obtained from $\hat \nu$  by replacing each segment which cuts through an equidistant tube $T \in \mathcal T_n$ with the hyperbolic geodesic with the same endpoints, is  outside some ball $B_{\HHH}(O,f(N))$ for some function $f(N) \to \infty$ with $N$.  

Suppose that $A$ and $B$ are the entry and exit points of $\hat \nu$ to some $T \in \mathcal T_n$. 
Suppose that the  hyperbolic geodesic from to 
 $O$ to $A$ first meets $T$ at a point $\bar A$. 
 Since  $d_e(O,A) \succ h(N)$, it follows that 
 $d_{\HHH}(O,\bar A) \succ h(N)$. 
We deduce  from Lemma~\ref{penetrates} below  that 
$T$ is entirely outside $B(O,R)$ for some $R \succ h(N) $. In particular
 the hyperbolic geodesic segment $[A,B]$ is outside
 $B(O, R)$ and the result follows.
  \end{proof}

\begin{lemma} \label{penetrates} Suppose that $T$ is an equidistant tube in $\HHH^3$, that is, the set of points equidistant from a geodesic  axis in $\HHH^3$, and that $T$ has radius at least  $r$ for some uniformly large $r$.  Suppose that $A \in \dd T$
 is outside $B(O,R)$, where $O \in \HHH^3$ is  a  fixed base-point. Then the entire tube $T$ is outside $B(O,R')$ for some $R' \succ R$.
\end{lemma}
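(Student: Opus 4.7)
The plan is to exploit the convexity of equidistant tubes in $\HHH^3$ together with standard hyperbolic trigonometry in cylindrical coordinates about the axis. Let $\gamma$ denote the axis of $T$ and write $\rho \geq r$ for its radius, so that $T = \{p \in \HHH^3 : d(p,\gamma) \leq \rho\}$ is a convex subset of $\HHH^3$. For $R$ large enough the hypothesis $d(O,A) \geq R$ with $A \in \partial T$ forces $O \notin T$, and hence $d := d(O,\gamma) > \rho$; moreover, because $T$ is the $\rho$-neighbourhood of $\gamma$, the distance from $O$ to $T$ is realised at the unique boundary point $\bar P$ on the perpendicular from $O$ to $\gamma$, giving $d(O,T) = d - \rho$. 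It therefore suffices to show that $d \succ R$, as the additive $\rho$ is absorbed into the multiplicative constant by the uniform upper bound on $r$ in the collection of Margulis tubes under consideration.

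Let $P$ and $P_A$ be the perpendicular feet on $\gamma$ of $O$ and $A$ respectively, and set $s := d_\gamma(P, P_A)$. The right triangle $OPP_A$ (with the right angle at $P$) gives $\cosh d(O,P_A) = \cosh d \cdot \cosh s$, and the triangle inequality
\[
R \leq d(O,A) \leq d(O,P_A) + d(P_A,A) = \cosh^{-1}(\cosh d \cdot \cosh s) + \rho
\]
then yields $\cosh d \cdot \cosh s \geq \cosh(R - \rho)$. The next step is to rule out the degenerate scenario where $d$ is small while the axial displacement $s$ is large—geometrically, the case where $\gamma$ passes close to $O$ but $A$ lies far out along the tube. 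In the application at the end of the previous proof, the relevant boundary point $A$ is precisely $\bar A$, the first point where the hyperbolic geodesic $[O,A']$ enters $T$ (for the entry point $A'$ of $\hat\nu$ into $T$); convexity of $T$ then constrains $\bar A$ to lie within bounded distance (depending only on $\rho$, hence on $r$) of the closest point $\bar P$, bounding the corresponding axial displacement $s$ by $O(\rho)$. Feeding this back into the trigonometric identity gives $\cosh d \succeq \cosh R$, hence $d \succ R$ and consequently $d(O,T) = d - \rho \succ R$, which is the required $R' \succ R$.

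The main obstacle is the last step: controlling the axial displacement $s$ at the entry point $\bar A$. For a completely arbitrary $A \in \partial T$ with $d(O,A)$ large one can indeed arrange $s$ arbitrarily large while keeping $d$ (and hence $d(O,T)$) small, so the conclusion would fail without additional geometric input. The uniform lower bound on $r$ is used to ensure that the trigonometric cross-sectional geometry of $T$ is non-degenerate, so that the convex-projection estimate controlling $s$ in terms of $\rho$ gives multiplicative (rather than only additive) control of $d(O,T)$ by $R$. Organising this estimate cleanly from the convexity of $T$ and the transversal entry condition on $[O,A']$ is the heart of the argument.
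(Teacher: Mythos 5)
You correctly spotted the point that the lemma as literally stated cannot hold for an arbitrary $A \in \partial T$ (one can put $A$ far down the tube with $d(O,A)$ huge while $d(O,T)$ stays small), and that the implicit hypothesis supplied by the application is that $A$ is the \emph{first} point at which a geodesic from $O$ meets $T$, i.e.\ $[O,A]\cap T=\{A\}$; the paper's proof indeed invokes exactly this (``since $[O,A]$ is outside $T$''). However, your central step is a gap. The assertion that ``convexity of $T$'' forces the entry point $\bar A$ to lie within bounded distance of the nearest point $\bar P$ is not a consequence of convexity: for a closed half-space bounded by a totally geodesic plane, \emph{every} boundary point is visible from an external $O$ (a geodesic meets a totally geodesic plane at most once), at arbitrarily large distance from $\bar P$, so the analogous statement fails for that convex set. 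The visibility bound is in fact true for equidistant tubes of radius at least $r$ (and with a constant independent of the radius $\rho$, not merely $O(\rho)$), but establishing it requires precisely the tube-specific angle/trigonometric analysis that constitutes the whole content of the lemma; you flag it yourself as ``the heart of the argument'' and then do not supply it, so the proposal is not a proof.

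There is also a quantitative error in your handling of $\rho$. You appeal to a ``uniform upper bound on $r$'' to absorb additive terms of size $\rho$; the hypothesis is a \emph{lower} bound on the radius, and in the application the sets $T\in\mathcal{T}_n$ are lifts of Margulis tubes whose radii are unbounded above (this is the point of unbounded geometry). Even granting $s=O(\rho)$, your inequality $\cosh d\cdot\cosh s\geq \cosh(R-\rho)$ only yields $d\geq R-O(\rho)$ and hence $d(O,T)=d-\rho\geq R-O(\rho)$, which is vacuous when $\rho$ is comparable to $R$. The paper's argument sidesteps both issues: with $P$ the nearest point of $T$ to $O$ and $A',P'$ the feet of the perpendiculars to the axis, one shows that either $d(A,P)$ is uniformly bounded, or the angle at $A$ between $[A,P]$ and the inward normal $[A,A']$ is bounded away from $\pi/2$ (here the lower bound $|AA'|=\rho\geq r$ is used, but only as a lower bound); since $[O,A]$ arrives at $A$ from outside $T$, the angle between $[O,A]$ and $[A,P]$ is then bounded away from $0$, so $[O,A]\cup[A,P]$ is a uniform quasi-geodesic and $d(O,T)=d(O,P)\succ d(O,A)\geq R$ with constants independent of $\rho$. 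If you wish to salvage your route, you must prove the visibility estimate $d(\bar A,\bar P)\leq C(r)$ directly (for instance by computing the visible region of a cone about a vertical axis in the upper half-space model); once that is done the triangle inequality gives $d(O,T)\geq R-C$ with no trigonometric identities at all.
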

\begin{proof}  Let $P$ be the point on $T$ nearest to $O$ in the hyperbolic metric   and let $P',A'$ be the feet of the perpendiculars from $P, A$ to the axis of $T$. We claim that   if $d(A,P) > c >1$, then the angle $\theta$ between the geodesics $[A,P]$ and $[A,A']$ is  uniformly bounded away from $\pi/2$.   If $d(A',P') > 1$, this is easy since $[A,P]$ roughly tracks the quasi-geodesic $[A,A']  \cup [A',P'] \cup [P',P]$. 

If $d(A',P' ) \leq 1$ let $Q,Q'$ be respectively the feet of the perpendiculars from $A', P'$ to the geodesic $[A,P]$, so that $|QQ'| <1$.   Since
 $\cos \theta = \tanh |AQ| / \tanh |AA'|$ and $|AA'|\geq r$, it follows that $\cos \theta$ is bounded away from $0$ unless $|AQ|$ is very small. 
By symmetry $|A P| = 2|A Q| +|QQ'| $, so if we assume that 
$|A P| > c >1$  this is impossible. 
Since  $[O,A]$ is outside $T$, and since $[A,A']$ is perpendicular to $\dd T$ at $A$, we have shown that either $[A,P]$ has uniformly bounded length, or the angle between 
$[O,A]$ and $[A,P]$ is bounded away from $0$.

It follows in all cases that $[O,A] \cup [A,P]$ is  a uniform quasi-geodesic  and hence that 
$d(O,P) \succ R$. The result follows by convexity.
\end{proof}

\begin{cor} If $\hat i_{\infty}(\xi) \notin \PP_\infty$, then  $\hat i_n(\xi) \to \hat i_{\infty}(\xi)$.
\end{cor}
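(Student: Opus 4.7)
The plan is to deduce this final corollary directly by combining the two-case analysis above with the pointwise convergence criterion of Theorem~\ref{ptwisecrit}. Specifically, given $\xi \in \Lambda_{\G}$ with $\hat i_\infty(\xi) \notin \PP_\infty$, I would split on the dichotomy already set up: either the geodesic ray $[O,\xi) \subset \HHH^2$ has infinite length in the electric metric $d_e^S$ on $\widetilde S$ (Case 1), or it has finite length (Case 2). In both cases the aim is the same, namely to verify condition $EPP(\xi)$ of Definition~\ref{critep}, at which point Theorem~\ref{ptwisecrit} immediately gives $\hat i_n(\xi) \to \hat i_\infty(\xi)$.

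First, in Case 1, I would simply quote the work done earlier in Section~5, where $EPP(\xi)$ was established without any hypothesis on $\hat i_\infty(\xi)$: the key inputs were Lemma~\ref{electricqisom} (uniform quasi-isometry of $\widetilde\phi_n$ for the two electric metrics) together with Lemma~\ref{ea-genl} applied to electro-ambient quasi-geodesics in $(\widetilde K_n, d_e^n)$. No further argument is needed here.

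Next, in Case 2, the hypothesis $\hat i_\infty(\xi) \notin \PP_\infty$ is exactly what is needed to invoke the preceding corollary, which uses the strong convergence model built from Brock's examples together with the comparison map $V$ between $X_R$ and $\Sigma^c$, the $UEPP$ property for the auxiliary quasi-Fuchsian sequence $F_n$ uniformizing $(\chi^n(\Sigma),\Sigma)$ (which follows from Theorem~\ref{thm:strong=unif2} and McMullen's strong convergence), and Bowditch's Theorem~\ref{bowditch-ct} to rule out the possibility that $\xi$ is the endpoint of a boundary leaf of the crown of $\chi$. The output is precisely $EPP(\xi)$.

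Finally, in both cases, applying Theorem~\ref{ptwisecrit} to the verified condition $EPP(\xi)$ yields $\hat i_n(\xi) \to \hat i_\infty(\xi)$. The only subtle point is bookkeeping: one must check that the dichotomy between finite and infinite electric length of $[O,\xi)$ genuinely exhausts all $\xi \in \Lambda_{\G}$, which follows at once from the remark preceding Proposition~\ref{ptwisecnvg} that this property depends only on $\xi$ and not on $n$. I do not expect a serious obstacle, since all heavy lifting (the electric geometry arguments in Case 1 and the comparison with the pseudo-Anosov mapping torus in Case 2) has already been carried out; the corollary is essentially a packaging statement that funnels the two cases through the single criterion $EPP(\xi)$.
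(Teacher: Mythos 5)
Your proposal is correct and matches the paper's argument: the paper's proof of this corollary is simply an appeal to Theorem~\ref{ptwisecrit}, the condition $EPP(\xi)$ having been verified in the immediately preceding corollary (and, for the infinite electric length case, earlier in the section). Your additional bookkeeping about the two-case dichotomy is harmless and consistent with how the paper organizes the material.
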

\begin{proof} This follows immediately from Theorem~\ref{ptwisecrit}.
\end{proof}

This completes the proof of Proposition~\ref{ptwisecnvg}.

\begin{rmk} {\rm In   the proof of Theorem~\ref{brockexample},  we used the pseudo-Anosov $\chi = \alpha |_{R}$ only to get a simply degenerate manifold
corresponding to a representation of $\pi_1(R)$ in the algebraic limit. We could replace the sequence $G_n$ with any sequence of representations $\rho'_n$ of $\pi_1(S)$
such that \\
a) the sequence $\rho'_n|_{\pi_1(R)}$ converges to a simply degenerate representation of $\pi_1(R)$ and \\
b) the sequence $\rho'_n|_{\pi_1(L)}$ converges to a quasi-Fuchsian representation of $\pi_1(L)$. \\
Then the general form  \cite{mahan-elct2} Theorem \ref{bowditch-ct}, which applies to the case in which the geometry of the limit manifolds do not necessarily have bounded geometry, together with a suitably modified version of Theorem~\ref{thm:strong=unif}, would furnish   the same conclusion as Theorem \ref{brockexample}, where we replace the unstable lamination of $\chi$ with the ending lamination of $G_{\infty}$. }
\end{rmk}


\end{document}